\documentclass[11pt]{amsart}

\usepackage{fullpage}
\usepackage{graphicx}
\usepackage{amsmath, amssymb, amsfonts, amsthm}
\usepackage{url}
\usepackage{amsaddr}

\usepackage{tikz} \usetikzlibrary{arrows}

\usepackage{algorithm}
\usepackage{algorithmic}

\usepackage{color}
\definecolor{darkblue}{rgb}{0,0,0.4}

\newtheorem{thm}{Theorem}[section]
\newtheorem{cor}[thm]{Corollary}
\newtheorem{prop}[thm]{Proposition}
\newtheorem{lem}[thm]{Lemma}

\theoremstyle{remark}
\newtheorem{rem}[thm]{Remark}

\newtheorem{assum}[thm]{Assumption}

\newcommand{\ie}{{\it i.e.}}

\newcommand{\ad}{{\mathcal A}}

\newcommand{\wstarto}{\xrightarrow{w*}}

\newcommand{\revision}[1]{\textcolor{black}{\small {\sf #1}}}



\begin{document}
\title{Extremal Spectral Gaps for \\ Periodic Schr\"odinger Operators}

\author{Chiu-Yen Kao}
\address{Department of Mathematical Sciences, 
Claremont McKenna College, 
Claremont, CA}
\email{Chiu-Yen.Kao@claremontmckenna.edu}
\author{Braxton Osting}
\address{Department of Mathematics, University of Utah, Salt Lake City, UT}
\email{osting@math.utah.edu}
\thanks{Chiu-Yen Kao acknowledges partial support from Simons Foundation: Collaboration Grants for Mathematicians 514210 and Braxton Osting acknowledges partial support from NSF DMS 16-19755.}

\subjclass[2010]{
35P05, 
35B10, 
35Q93, 
49R05, 
65N25. 
}

\keywords{Schr\"odinger operator; periodic structure; optimal design; spectral bandgap; Bravais lattices; rearrangement algorithm}

\date{\today}

\begin{abstract} 
The spectrum of a Schr\"odinger operator with periodic potential generally consists of bands and gaps. In this paper, for fixed $m$, we consider the problem of maximizing the gap-to-midgap ratio for the $m$-th spectral gap over the class of potentials which have fixed periodicity and are pointwise bounded above and below.  We prove that the potential maximizing the $m$-th gap-to-midgap ratio exists. In one dimension, we prove that the optimal potential attains the pointwise bounds almost everywhere in the domain and is a  step-function attaining the imposed minimum and maximum values on exactly $m$ intervals. Optimal potentials are computed numerically using a rearrangement algorithm and are observed to be periodic.  In two dimensions, we develop an efficient rearrangement method for this problem based on a semi-definite formulation and apply it to study properties of extremal potentials. We show that, provided a geometric  assumption about the maximizer holds, a lattice of disks maximizes the first gap-to-midgap ratio in the infinite contrast limit. Using an explicit parametrization of two-dimensional Bravais lattices, we also consider how the optimal value varies over all equal-volume lattices. 
\end{abstract}

\maketitle

\section{Introduction}
As described by Floquet-Bloch theory, the spectrum of a self-adjoint, linear differential operator with periodic coefficients consists of spectral bands, and perhaps, spectral gaps. Spectral gaps are significant in a variety of physical applications, where they often describe frequency intervals at which waves cannot propagate.  Examples abound, but spectral gaps are used to control the propagation of electromagnetic waves in a photonic crystal and the energy spectrum of an electron in a solid-state device. In this paper, we study the spectral gaps of a periodic Schr\"odinger operator.

We consider the periodic Schr\"odinger operator, $H_V \colon H^2(\mathbb R^d) \to L^2(\mathbb R^d)$, given by 
$$H_V = - \Delta + V.$$
Here, $V \in L^\infty(\mathbb R^d)$ is a real-valued, $\Gamma$-periodic function for a Bravais lattice, $\Gamma$.  For a general discussion of the spectrum of $H_V$, see, {\it e.g.}, the recent review \cite{kuchment2016}. 
The spectral problem is to find $(E,\psi)$ satisfying 
\begin{subequations}\label{eq:SpectralProblem}
\begin{align} 
& H_V \  \psi = E \  \psi \\ 
& \psi \ \textrm{bounded}. 
\end{align}
\end{subequations}
We say that $\psi(x;k) = e^{i k\cdot x } p(x)$  for a $\Gamma$-periodic function $p \in H^2 (\mathbb R^d)$ is a \emph{Bloch-Floquet solution} with \emph{quasi-momentum} $k \in \mathcal{B}$. Here, $\mathcal B \subset \mathbb R^d$ is the \emph{Brillouin zone}, taken as the Voronoi cell of the origin in the reciprocal lattice, $\Gamma^*$.   
We can decompose $H^2$ into  spaces with different quasi-momenta, $H^2_k$. It is convenient to define the twisted Schr\"odinger operator, 
$$H_V(k) = - (\nabla+i k )\cdot (\nabla+i k ) + V,$$ 
which acts on $\Gamma$-periodic functions. Thus, the spectral problem \eqref{eq:SpectralProblem} can be rewritten as the eigenvalue problem,
\begin{subequations}\label{eq:SpectralProblem2}
\begin{align} 
\label{eq:SpectralProblem2a}
& H_V(k) \ p = E \ p \\ 
& p(x+ X) = p(x) \qquad X \in \Gamma. 
\end{align}
\end{subequations}
Using periodicity, we can restrict $p(x;k)$ to the torus $ \mathbb R^d / \Gamma$. 
The \emph{dispersion relation} (Bloch variety),   is given by 
\begin{align*}
B_H & = \{ (k,E) \in \mathbb R^{d+1} \colon H_V  \textrm{ has a Bloch-Floquet solution $\psi$ with quasi-momentum } k \} \\ 
&= \{ (k,E) \in \mathbb R^{d+1} \colon H_V(k) \textrm{ has a  $\Gamma$-periodic solution } p \} 
\end{align*}
For any $k \in \mathcal B$, the twisted Schr\"odinger operator $H_V(k)$ has a discrete spectrum, so it is convenient to decompose $B_H$ into spectral bands $E_j(k)$.  Eigenvalues, $E_j(k)$,  are $\Gamma^*$-periodic with respect to $k$, so can be considered over the first Brillouin zone,  $\mathcal B$. 

The spectrum of $H_V$ is then given by 
$$
\sigma(H_V) = \bigcup\limits_{k \in \mathcal B} \sigma \left(H_V(k) \right) 
$$
and generally consists of bands and gaps. 
For a given potential $V \in \ad$, denote the left and right edges of the $m$-th gap in the spectrum, $\sigma(H_V)$,  by 
$$\alpha_{m} =  \max_{k \in \mathcal B} \ E_m(k)
\qquad \textrm{and} \qquad  
\beta_{m} = \min_{k \in \mathcal B} \ E_{m+1}(k). 
$$
If the $m$-th gap is non-empty, then $\beta_m > \alpha_m$ but we allow for the possibility that the $m$-th gap is empty.  
For $m\in \mathbb N^{+}$ fixed, we define the \emph{gap-to-midgap ratio}, 
\begin{equation}
\label{eq:G}
G_{m}[V]:= \frac{\beta_{m}- \alpha_{m}}{( \alpha_{m} + \beta_{m})/2 }. 
\end{equation}

For $\Gamma$ a fixed Bravais lattice and $V_+ > 0$, we define the \emph{admissible set} 
\begin{align}
\label{eq:Ad}
 \ad(\Gamma,V_+) :=\{ V \in L^{\infty}(\mathbb R^d)\colon  
& V(x+X)=V(x)   \ \  \textrm{and} \ \  V(x) \in [0,V_+]  \\ 
\nonumber
& \textrm{ for almost all } x \in \mathbb R^d \textrm{ and all } \ X\in \Gamma \}.
\end{align}

We consider the optimization problem of maximizing the gap-to-midgap ratio, $G_m$, over potentials in $\ad = \ad(\Gamma, V_+)$. The following Theorem is immediate. 
\begin{thm} \label{thm:exist}
For fixed $m\in \mathbb N^{+}$, $V_+> 0$, and Bravais lattice $\Gamma$, there exists  $V_{m, \Gamma, V_+}^{\star} \in \ad(\Gamma, V_+)$ such that 
\begin{equation} \label{e:opt} 
G_{m}[V_{m, \Gamma, V_+}^{\star}] = G_{m, \Gamma, V_+}^{\star} := \sup_{V\in \ad(\Gamma, V_+)} G_{m}[V].
\end{equation}
\end{thm}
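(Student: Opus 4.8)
The plan is to run the direct method of the calculus of variations, so the whole argument rests on two ingredients: weak-$*$ compactness of the admissible set $\ad = \ad(\Gamma,V_+)$, and weak-$*$ upper semicontinuity of the functional $G_m$. Working on the torus $\mathbb R^d/\Gamma$, the set $\ad$ is a bounded, convex, norm-closed subset of $L^\infty(\mathbb R^d/\Gamma) = \bigl(L^1(\mathbb R^d/\Gamma)\bigr)^*$; the pointwise constraint $0 \le V \le V_+$ is preserved under weak-$*$ limits (test against nonnegative $\phi \in L^1$), so $\ad$ is weak-$*$ closed, and by Banach--Alaoglu it is weak-$*$ compact and, since $L^1$ is separable, metrizable. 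Hence a maximizing sequence $V_n$ with $G_m[V_n] \to G_{m,\Gamma,V_+}^\star$ has a subsequence with $V_n \wstarto V^\star \in \ad$, and it remains only to show $G_m[V^\star] \ge \limsup_n G_m[V_n]$.

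Since $G_m[V] = (\beta_m - \alpha_m)/\bigl[(\alpha_m + \beta_m)/2\bigr]$ with $\alpha_m = \max_k E_m(k)$ and $\beta_m = \min_k E_{m+1}(k)$, and since for $m \ge 1$ one has $\beta_m \ge \min_k E_2(k;0) > 0$ (comparison with $V\equiv 0$, where the second band of the torus Laplacian stays strictly positive), the denominator is bounded away from zero uniformly on $\ad$; thus it suffices to prove that the band edges $\alpha_m,\beta_m$ are weak-$*$ continuous. I would reduce this to the claim that each band function $E_j(k;\cdot)$ is weak-$*$ continuous at $V^\star$, \emph{uniformly in $k \in \mathcal B$}. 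The uniformity comes from equicontinuity in $k$: writing $H_V(k) = -\Delta - 2ik\cdot\nabla + |k|^2 + V$, the $k$-dependence is a first-order operator independent of $V$, and the normalized eigenfunctions of the low-lying bands are bounded in $H^1$ uniformly in $V \in \ad$ (their Rayleigh quotients are bounded and $V \ge 0$), so min-max gives a modulus of continuity in $k$ independent of $V$. Pointwise-in-$k$ convergence together with this equicontinuity yields uniform convergence on the compact set $\mathcal B$ (Arzel\`a--Ascoli), after which $\max_k$ and $\min_k$ pass to the limit.

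The heart of the argument, and the step I expect to be the main obstacle, is the weak-$*$ continuity of a single eigenvalue $E_j(k;\cdot)$ for fixed $k$; the subtlety is that weak-$*$ convergence of potentials is far weaker than strong convergence and does not obviously give resolvent convergence, and it works only because the potential enters the quadratic form as a zeroth-order term. For \emph{upper} semicontinuity I would fix a $j$-dimensional subspace $S^\star$ realizing the Courant--Fischer value $E_j(k;V^\star)$ and insert it into the min-max for $V_n$; the form $u \mapsto \int (V_n - V^\star)|u|^2$ converges to $0$ uniformly over the unit sphere of $S^\star$, because a weak-$*$ convergent sequence converges uniformly on norm-compact subsets of $L^1$ and $\{\,|u|^2 : u \in S^\star,\ \|u\|=1\,\}$ is such a compact set. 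For \emph{lower} semicontinuity I would take orthonormal eigenfunctions $p_n^{(1)},\dots,p_n^{(j)}$ of $H_{V_n}(k)$ for the first $j$ eigenvalues, note that they are bounded in $H^1$, and extract (Rellich) limits $p^{(i)}$, strongly in $L^2$ and weakly in $H^1$; these remain orthonormal. Passing to the limit in the weak eigenvalue equations---using $V_n \wstarto V^\star$ tested against the fixed $L^1$ functions $p^{(i)}\bar\phi$ together with $\int V_n\,(p_n^{(i)}-p^{(i)})\,\bar\phi \to 0$---shows that each $p^{(i)}$ is an eigenfunction of $H_{V^\star}(k)$ with eigenvalue $\lim_n E_i(k;V_n)$. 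These span a $j$-dimensional subspace on which the $V^\star$ Rayleigh quotient is at most $\lim_n E_j(k;V_n)$, so min-max gives $E_j(k;V^\star) \le \liminf_n E_j(k;V_n)$. Combining the two bounds gives $E_j(k;V_n) \to E_j(k;V^\star)$, which feeds back through the preceding paragraph to yield $G_m[V_n] \to G_m[V^\star]$, and hence that the supremum in \eqref{e:opt} is attained.
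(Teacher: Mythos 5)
Your proposal is correct and follows essentially the same route as the paper: the direct method, using weak-$*$ compactness of $\ad(\Gamma,V_+)$ in $L^\infty = (L^1)^*$ together with weak-$*$ continuity of the band edges $\alpha_m, \beta_m$ along a maximizing sequence. The only difference is that the paper outsources that continuity to the proof of Proposition 2.1(ii) in Dobson (1999), whereas you prove it in-house (upper semicontinuity via min--max with a fixed trial subspace and uniform convergence of $\int (V_n - V^\star)|u|^2$ on the compact image of its unit sphere in $L^1$, lower semicontinuity via Rellich compactness of the eigenfunctions, and equicontinuity in $k$ to pass the $\max_k$/$\min_k$ to the limit); your argument correctly supplies the content of that citation.
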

\begin{proof}[Proof of Theorem \ref{thm:exist}] 
For $\beta \geq \alpha \geq 0$, we have that $0 \leq G_m \leq 2$.  Let $\{ V_{\ell} \}_{\ell=1}^{\infty}$ be a maximizing sequence, \ie,   $G_{m,\Gamma,V_+}^{\star} = \lim_{\ell\uparrow\infty}G_{m}[V_{\ell}] $. 
Since $\ad $ is weak* compact, there exists $V^\star \in \ad$ and a weak* convergent subsequence $V_{\ell}\wstarto V^{\star}$. The mappings $V \mapsto \alpha_{m}[V]$ and $V \mapsto \beta_{m}[V]$ are weak* continuous over $\ad$; see the proof of \cite[Proposition 2.1(ii)]{Dobson1999}. It follows that $V \mapsto G_{m}[V]$ is also weak* continuous over $\ad$. Thus $G_{m}[V^{\star}] = G_{m,\Gamma,V_+}^{\star}$.
\end{proof}

We remark that $V_{m, \Gamma, V_+}^{\star}$ is never unique since $G_m$ is invariant to translations. {\color{black}{We consider the gap-to-midgap ratio of the $m$-th spectral gap, $G_m$, in \eqref{eq:G} rather than just the length of the $m$-th spectral gap because it is a non-dimensional quantity. We also prefer this quantity to fixing $\omega_0$ and maximizing the objective, $\min \{ \beta_m - \omega_0^2, \omega_0^2 - \alpha_m \}$, as in  \cite{Dobson1999,cox2000band}, since (i) this involves the introduction of an additional parameter, $\omega_0$, and (ii) from the optimization viewpoint, introduces additional non-differentiability.}}

\subsection*{Overview} The goal of this work is twofold: (i) develop and study efficient computational methods for finding optimal potentials satisfying \eqref{e:opt} and (ii) study the properties of optimal potentials using both computational and analytical methods.  

In one dimension, we prove that the optimal potential is a  step function attaining the imposed minimum and maximum values on exactly $m$ intervals. Such potentials are sometimes referred to as \emph{bang-bang}. Optimal potentials are computed numerically using a rearrangement algorithm (Algorithm \ref{alg:Rearrange}) and observed to be periodic with period $X/m$. In Proposition \ref{p:1DhighContrast}, we prove that periodic potentials are optimal in the high contrast limit ($V_+ = \infty$). 

In Section \ref{sec:2D}, we change variables in the two-dimensional  periodic problem posed on the torus to obtain a formulation of the problem on a square. 
In Section \ref{s:Opt2d}, we develop an efficient rearrangement method for this problem based on a semi-definite reformulation (Algorithm \ref{alg:SDPrearrange}). We prove in Proposition \ref{prop:Wbb} that the optimal potential has at least one grid point $x$ at which either $V(x) = 0$ or $V(x) = V_+$, a property that we refer to as \emph{weakly bang-bang}. Using the KKT conditions for optimality, we explain in Proposition \ref{prop:Alg2GenAlg1} how this algorithm generalizes Algorithm \ref{alg:Rearrange}, used in one dimension. We use Algorithm \ref{alg:SDPrearrange} to compute optimal potentials with the translational symmetries of the square and triangular lattices for $m=1,2,\ldots,8$; see Figures \ref{f:2DsqA}--\ref{f:2DtriB}. We also study the dependence of the optimal potentials on the parameter $V_+$. We observe from the computational results that the optimal potential as $V_+ \to \infty$ that the region where {\color{black}{$V = 0$}} consists of $m$ disks in the primitive cell; see Figure \ref{f:varyV+}. 
We prove, in Propositions \ref{prop:OptDisc} and Corollary \ref{prop:OptDiscMg2}, the infinite contrast asymptotic result ($V_+ = \infty$), that for $m\geq1$, subject to a geometric  assumption, that the optimal potential has {\color{black}{$\{V=0\}$}} on exactly $m$ equal-size disks. 
Finally, using a parameterization of two-dimensional Bravais lattices, we also consider how $G_{m, \Gamma, V_+}^{\star}$ varies over all equal-volume Bravais lattices, $\Gamma$. 

\subsection*{Related work}  Our one-dimensional results are most similar to \cite{Ashbaugh1992} and \cite{osting2012bragg}. 

In \cite{Ashbaugh1992}, the problem of minimizing the width of the lowest spectral band for the one-dimensional Schr\"odinger operator is studied using methods of proof similar to the present paper. In particular, potentials which maximize the length of the gap between the two lowest Neumann and the gap between the first Neumann and the first Dirichlet eigenvalues are studied. It is shown that such potentials are bang-bang and a necessary condition for the optimal potentials in terms of the associated eigenfunctions is presented. These results are also discussed and put in context of \cite[Ch.8]{Henrot:2006fk}, which is a good general reference for extremal eigenvalue problems, though with less emphasis on extremal properties of the spectrum for periodic operators studied in the present work. 

In \cite{osting2012bragg}, the gap-to-midgap ratio for a one-dimensional periodic Helmholtz operator is studied. It is shown that the Bragg structure (a.k.a. quarter-wave stack) uniquely maximizes the first spectral gap-to-midgap ratio within an admissible class of pointwise-bounded, periodic coefficients. 
This structure also arises asymptotically in the study of long-lived solutions to the wave equation in an infinite domain \cite{BOMWHelm}. 

\bigskip

In two dimensions, the spectrum of Schr\"odinger operators is considerably more complex which causes the study of its extremal properties to be yet more challenging. One of the first studies in this area and arguably the closest  to the present work is  \cite{Dobson1999,cox2000band}. Here, the authors study the spectrum of the TE and TM Helmholtz operators. The objective function to be maximized is $\min \{ \beta_m - \omega_0^2, \omega_0^2 - \alpha_m \}$ with a given $\omega_0$. Optimal potentials are proven to exist within an admissible set and characterized via optimality conditions. In addition, optimal potentials are studied via a numerical method based on the subdifferential of the objective function. The paper focuses on refractive indices with the symmetries of the square lattice. 

In \cite{kao2005}, the authors consider gaps for the two-dimensional Helmholtz operator by using a level set approach to capture the interface between two materials of different dielectrics and shape derivative to deform the interface to find the optimal structure. The optimal solutions computed there reveal additional symmetries, which in part motivates the present study. In later work \cite{he2007incorporating}, both shape derivatives and topological derivatives are incorporated with level set methods in order to flexibly allow changes in the topology so that optimal structures with holes can be easily identified. 

In \cite{sigmund2008geometric}, an exhaustive search on a coarse grid and topology optimization were used to find periodic coefficients in both the TE and TM Helmholtz operators for which the gap-to-midgap ratio is maximized. Based on these numerical results, Sigmund and Hougaard reached the bold conjecture that the globally optimal structure has a particular structure related to a centroidal Voronoi tessellation (CVT). The generators of this CVT correspond to the optimal TM coefficients and the walls of the tessellation correspond to the optimal TE coefficients. 

In recent work \cite{men2010bandgap}, it has been shown that the optimization problem of  maximizing the gap-to-midgap ratio can be reformulated using subspace methods and cast as a sequence of linear semidefinite programs (SDP). In the current work, we follow this approach as well.  Numerical results are given for both the TE and TM Helmholtz operators for a square lattice. These methods have been extended to study spectral gaps of Helmholtz operators in three dimensions, with applications to photonic crystals \cite{Men2014}. 
 
We refer to the numerical methods developed in this work as \emph{rearrangement methods}. Rearrangement methods were introduced by  Schwarz and  Steiner and have wide applications in variational problems \cite{Polya1951,Marshall2011,Bandle1980,kawohl2000,Henrot:2006fk}. They involve a sequence of steps which rearrange the domain or a coefficient in an operator as to provably reduce an objective function. Recently, rearrangement methods have been used to devise computational methods for shape optimization problems, including Krein's problem \cite{Krein:1955ye,Cox1991,Chanillo:2000,Kao2013}, population dynamics \cite{Kao2008, hintermuller2012principal, chugunova2016study}, Dirichlet partitions \cite{Partition2013}, and biharmonic vibration \cite{chen2016minimizing,kang2017minimization}, and have proven  to be extremely efficient in practice.  In one of the examples studied in \cite{Kao2013}, a  method based on rearrangement is able to find an optimal solution in as little as 4 iterations, compared to the 200 iterations (each of equal computational cost) required by a gradient-based, level-set-method evolution  \cite{osher-santosa-jcp01}.

Finally, we mention another connection with the present work. If we consider the spectral problem \eqref{eq:SpectralProblem} with $V\equiv 0$, the spectral gaps close. In \cite{KLO2015}, the authors, together with Rongjie Lai, consider the periodic problem with $k=0$. Denoting the  eigenvalues of the periodic problem by $\lambda_m$, it is shown that among flat tori of volume one, the  $m$-th eigenvalue has a local maximum with value  
\[ \lambda_m = 4\pi^2 \left\lceil \frac{m}{2} \right\rceil^2 \left( \left\lceil \frac{m}{2} \right\rceil^2  - \frac{1}{4}\right)^{-\frac{1}{2}}. \]  

\subsection*{Outline} 
In Section \ref{sec:1D}, we study the one-dimensional problem. 
In Section \ref{sec:2D}, we present some background material needed for the study of spectral gaps for the two-dimensional problem. 
In Section \ref{s:Opt2d}, we describe the SDP reformulation of the problem and present a rearrangement algorithm based on this formulation. The results from several computational experiments are presented. 
We conclude in Section \ref{sec:disc} with a discussion.


\section{One-dimensional case}\label{sec:1D} 
In this section, we consider \eqref{eq:SpectralProblem2} in one-dimension, which is sometimes also referred to as Hill's equation. 
We assume that the potential, $V$, is assumed to be admissible, as in \eqref{eq:Ad}. The one-dimensional case is considerably simpler since the edges of a nonempty spectral gap are characterized by either anti-periodic ($m$ odd) or periodic ($m$ even) eigenproblems, for which the eigenvalues are simple. In fact, for even gaps with $k=0$, this problem reduces to maximizing the $m$-th gap between eigenvalues for a Schr\"odinger operator on $S^1$ where the potential is point-wise bounded. The results proven here are analogous to the results proven in   \cite{osting2012bragg} for the Helmholtz operator. 

Recall the definition of $G_m=\frac{\beta_{m}- \alpha_{m}}{( \alpha_{m} + \beta_{m})/2 }$ from \eqref{eq:G}. The following Lemmas give the variation of $G_m$ with respect to the potential.

\begin{lem} \label{prop:sens}
Let $(p,E)$ be a simple eigenpair satisfying \eqref{eq:SpectralProblem2}, for a potential $V_0\in\ad$, normalized such that $\int_{\mathbb R^d / \Gamma}  |p(x)|^{2}  \ dx = 1$. The Fr\'echet derivative of $E(V)$  at $V=V_0$ is 
\begin{align*}
\delta E =  \int_{\mathbb R^d / \Gamma}  |p(x)|^{2}  \delta V(x) \ dx    \quad \implies \quad \frac{\delta E}{\delta V} =  |p|^{2}. 
\end{align*}
\end{lem}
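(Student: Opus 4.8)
The plan is to compute the Fréchet derivative of a simple eigenvalue using standard first-order perturbation theory (Hadamard-type variation / Hellmann–Feynman). Since the eigenpair $(p,E)$ is simple for $V_0$, the implicit function theorem guarantees that $E$ and $p$ depend smoothly (in the appropriate sense) on $V$ in a neighborhood of $V_0$ within $\ad$, so differentiating the eigenvalue equation is justified. First I would perturb the potential as $V = V_0 + t\,\delta V$ and write the corresponding perturbed eigenpair as $E(t) = E + t\,\delta E + o(t)$ and $p(t) = p + t\,\delta p + o(t)$, with $p(0) = p$ and $E(0) = E$.

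\medskip

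Next I would substitute these expansions into the twisted eigenvalue equation $H_{V}(k)\,p(t) = E(t)\,p(t)$ and collect the terms of order $t$. Using that $H_{V_0}(k)\,p = E\,p$ at zeroth order, the first-order balance reads
\begin{equation*}
H_{V_0}(k)\,\delta p + (\delta V)\, p = \delta E\, p + E\, \delta p.
\end{equation*}
The key step is then to take the $L^2(\mathbb R^d/\Gamma)$ inner product of this identity with $p$ (more precisely with $\bar p$). Because $H_{V_0}(k)$ is self-adjoint on $\Gamma$-periodic functions, the terms involving $\delta p$ cancel: $\langle H_{V_0}(k)\,\delta p, p\rangle = \langle \delta p, H_{V_0}(k)\,p\rangle = E\,\langle \delta p, p\rangle$, which precisely annihilates the $E\,\delta p$ term on the right. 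What survives is
\begin{equation*}
\int_{\mathbb R^d/\Gamma} |p(x)|^2\, \delta V(x)\, dx = \delta E \int_{\mathbb R^d/\Gamma} |p(x)|^2\, dx,
\end{equation*}
and invoking the normalization $\int_{\mathbb R^d/\Gamma} |p|^2\,dx = 1$ gives the claimed formula, from which reading off the functional derivative $\frac{\delta E}{\delta V} = |p|^2$ is immediate.

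\medskip

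The main obstacle I expect is justifying the differentiability itself — that the perturbed eigenvalue and eigenfunction admit the first-order expansions used above, with remainders that are genuinely $o(t)$ in the right topology, and that $\delta p$ lies in the domain $H^2_k$ so that applying $H_{V_0}(k)$ to it and integrating by parts is legitimate. This is the standard analytic-perturbation-of-self-adjoint-operators result (a consequence of simplicity of $E$ together with the bounded perturbation $t\,\delta V$, which depends analytically on $t$), so I would cite Kato's perturbation theory rather than reprove it. The algebraic manipulation — the cancellation via self-adjointness — is routine once differentiability is in hand; the self-adjointness of $H_{V_0}(k)$ on periodic functions is exactly what makes the $\delta p$ contributions disappear, which is the crux of why the derivative depends only on $|p|^2$ and not on the unknown $\delta p$.
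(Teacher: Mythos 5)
Your proposal is correct. The paper itself states this lemma without proof, treating it as the standard Hellmann--Feynman/first-order perturbation fact, and your derivation --- expanding the eigenpair in $t$, taking the inner product with $p$, cancelling the $\delta p$ terms via self-adjointness of $H_{V_0}(k)$, and invoking the normalization, with Kato's analytic perturbation theory cited to justify differentiability of the simple eigenvalue --- is exactly the standard argument the paper implicitly relies on.
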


\begin{lem} \label{lem:derivJ} Let $d=1$ and fix $m\in \mathbb N^{+}$. Let $(\alpha,\psi_\alpha)$ and $(\beta,\psi_\beta)$ denote eigenpairs satisfying \eqref{eq:SpectralProblem} corresponding the left and right edges of the $m$-th gap in the spectrum.  
If $\beta > \alpha$, then the  variation of $G_m$ with respect to $V$ is given by 
\begin{align*}
\frac{\delta G_m}{\delta V} = \frac{\alpha \beta }{ (\alpha + \beta)^2/4} \left(  \psi_\beta^2 / \beta - \psi_\alpha^2 / \alpha \right).
\end{align*}
\end{lem}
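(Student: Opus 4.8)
The plan is to obtain $\delta G_m/\delta V$ by the chain rule, treating $G_m = 2(\beta-\alpha)/(\alpha+\beta)$ as a smooth function of the two band edges $\alpha := \alpha_m$ and $\beta := \beta_m$, and differentiating each edge separately with Lemma \ref{prop:sens}. The first and only delicate point is to verify that Lemma \ref{prop:sens} is applicable at both edges, since that lemma requires the eigenpair to be \emph{simple}. This is precisely where the one-dimensional structure and the hypothesis $\beta > \alpha$ enter: since $d=1$, the extrema $\alpha = \max_k E_m(k)$ and $\beta = \min_k E_{m+1}(k)$ are attained at the center or boundary of the Brillouin zone $\calB$, so that $\alpha$ and $\beta$ are eigenvalues of the anti-periodic ($m$ odd) or periodic ($m$ even) problem. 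For Hill's equation these edge eigenvalues are simple exactly when the corresponding gap is open, i.e. when $\beta > \alpha$; the associated eigenfunctions $\psi_\alpha,\psi_\beta$ may then be chosen real-valued and $L^2$-normalized so that $|\psi|^2 = \psi^2$. I expect this simplicity/differentiability step to be the main obstacle; once it is in place, Lemma \ref{prop:sens} gives the Fr\'echet derivatives $\delta\alpha/\delta V = \psi_\alpha^2$ and $\delta\beta/\delta V = \psi_\beta^2$, and the rest is routine.

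Next I would compute the two scalar partial derivatives of $G_m$ with respect to its arguments. A short quotient-rule calculation gives
\begin{align*}
\frac{\partial G_m}{\partial \alpha} = \frac{-4\beta}{(\alpha+\beta)^2}, \qquad \frac{\partial G_m}{\partial \beta} = \frac{4\alpha}{(\alpha+\beta)^2}.
\end{align*}

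Finally, combining these via the chain rule and substituting the edge derivatives yields
\begin{align*}
\frac{\delta G_m}{\delta V} = \frac{\partial G_m}{\partial \alpha}\,\frac{\delta \alpha}{\delta V} + \frac{\partial G_m}{\partial \beta}\,\frac{\delta \beta}{\delta V} = \frac{4}{(\alpha+\beta)^2}\left(\alpha\,\psi_\beta^2 - \beta\,\psi_\alpha^2\right),
\end{align*}
and factoring $\alpha\beta$ out of the parenthesis recasts this as $\frac{\alpha\beta}{(\alpha+\beta)^2/4}\left(\psi_\beta^2/\beta - \psi_\alpha^2/\alpha\right)$, the claimed expression. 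The division by $\alpha$ and $\beta$ in this last step is legitimate because $\alpha,\beta>0$: since $V\geq 0$, each band satisfies $E_j(k)$ bounded below by its free counterpart, whence $\alpha = \alpha_m \geq \alpha_1 > 0$ and $\beta > \alpha > 0$.
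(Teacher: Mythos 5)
Your proposal is correct and follows essentially the same route as the paper's (very terse) proof: simplicity of the periodic/anti-periodic edge eigenvalues when the gap is open, application of Lemma \ref{prop:sens} at each edge with real normalized eigenfunctions, and the chain-rule/quotient-rule algebra. The paper leaves the chain-rule computation and the positivity of $\alpha,\beta$ implicit, so your write-up is simply a fuller version of the same argument.
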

\begin{proof}
If $\beta>\alpha$, then $\alpha$ and $\beta$ are simple eigenvalues. The proof then follows from Lemma \ref{prop:sens} and the fact that $\psi_\alpha$ and $\psi_\beta$ are real. 
\end{proof}

\begin{thm} \label{thm:BangBang} Let $d=1$ and fix $m\in \mathbb N^{+}$. 
The maximizer of $G_{m}[V]$ over $\ad$ is piecewise constant and  achieves the prescribed point-wise bounds, $0$ and $V_+$, almost everywhere, \ie, $V^{\star}_{m}$ is a bang-bang control. 
Furthermore, any local maximizer $\tilde V \in \ad$ with corresponding eigenpairs $(\alpha, \psi_{\alpha})$ and $(\beta, \psi_\beta)$ with nonzero gap ({\i.e.} $\alpha \neq \beta$) satisfies 
\begin{equation}
\label{eq:Vstar}
\tilde{V}(x) = \begin{cases} 
V_{+}  &  x \in \Omega_{+}:= \{ x \colon  \psi_\alpha^2 (x) / \alpha <  \psi_\beta^2 (x)  / \beta  \} \\
0   &  x \in \Omega_{-} := \{ x \colon   \psi_\alpha^2 (x) / \alpha >   \psi_\beta^2 (x)  / \beta \}.
\end{cases}
\end{equation}
\end{thm}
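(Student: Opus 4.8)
The plan is to combine a first-order optimality condition with a unique-continuation argument for the band-edge eigenfunctions. I work with a (local) maximizer $\tilde V$ whose $m$-th gap is nonempty, so $\alpha\neq\beta$; both edges are then simple, and since $\tilde V\geq 0$ with $\tilde V\not\equiv 0$ one has $\alpha,\beta>0$ (the bottom of the spectrum is strictly positive unless $\tilde V\equiv 0$, which would close every gap). In particular the prefactor $\tfrac{\alpha\beta}{(\alpha+\beta)^2/4}$ in Lemma \ref{lem:derivJ} is strictly positive. Writing $f:=\psi_\beta^2/\beta-\psi_\alpha^2/\alpha$, so that $\Omega_+=\{f>0\}$ and $\Omega_-=\{f<0\}$, the statement reduces to two claims: (A) $\tilde V=V_+$ a.e.\ on $\Omega_+$ and $\tilde V=0$ a.e.\ on $\Omega_-$ (this is exactly \eqref{eq:Vstar}); and (B) the indifference set $\Omega_0:=\{f=0\}$ is Lebesgue-null. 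Claims (A) and (B) together force $\tilde V\in\{0,V_+\}$ a.e., i.e.\ the bang-bang conclusion.

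For (A) I would exploit the convexity of $\ad$. Given any $U\in\ad$, the segment $\tilde V_t=(1-t)\tilde V+tU$ stays in $\ad$ and, for small $t$, still has distinct edges, so $t\mapsto G_m[\tilde V_t]$ is differentiable near $0$ with right derivative $\tfrac{\alpha\beta}{(\alpha+\beta)^2/4}\int_{\R/\Gamma} f\,(U-\tilde V)\,\ud x$ by Lemma \ref{lem:derivJ}. Local maximality makes this $\leq 0$ for every $U\in\ad$, so $\tilde V$ maximizes the linear functional $U\mapsto\int f\,U$ over the box $\{0\leq U\leq V_+\}$. A linear functional over a box is maximized, uniquely on $\{f\neq 0\}$, by the bang-bang choice $U=V_+$ where $f>0$ and $U=0$ where $f<0$; hence $\tilde V$ coincides with this choice a.e.\ on $\Omega_+\cup\Omega_-$, giving \eqref{eq:Vstar}.

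The crux is (B), where the one-dimensional ODE structure is essential; note that $C^1$ regularity of $f$ alone (which holds since $V\in L^\infty$ gives real eigenfunctions in $H^2\hookrightarrow C^1$) does not preclude a positive-measure zero set. I would argue by contradiction. If $|\Omega_0|>0$, then, since the zeros of $\psi_\alpha$ are isolated and hence finite on a period, the set $S:=\Omega_0\cap\{\psi_\alpha\neq 0\}$ has positive measure; almost every point of $S$ is a density point. At such a density point $x_0$ the $C^1$ function $f$ satisfies $f(x_0)=f'(x_0)=0$, and solving $\psi_\beta^2=(\beta/\alpha)\psi_\alpha^2$ together with $\psi_\beta\psi_\beta'=(\beta/\alpha)\psi_\alpha\psi_\alpha'$ at $x_0$ shows $(\psi_\beta,\psi_\beta')$ is proportional to $(\psi_\alpha,\psi_\alpha')$, whence the Wronskian $W:=\psi_\alpha\psi_\beta'-\psi_\alpha'\psi_\beta$ vanishes at $x_0$. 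Thus $W=0$ at almost every point of $S$. Since the eigenvalue equations give $W'=(\alpha-\beta)\psi_\alpha\psi_\beta$, the function $W$ is $C^1$, so at density points of $\{W=0\}$ we must have $W'=0$; yet on $S$ we have $\psi_\alpha\psi_\beta=\pm\sqrt{\beta/\alpha}\,\psi_\alpha^2\neq 0$ and $\alpha\neq\beta$, forcing $W'\neq 0$ there — a contradiction. Hence $|\Omega_0|=0$ and $\tilde V$ is bang-bang.

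For the piecewise-constant refinement I would note that $\Omega_+$ and $\Omega_-$ are the open super- and sub-level sets of the continuous function $f$, and that the zeros of $f$ are in fact isolated: at a zero $x_0$ with $\psi_\alpha(x_0)\neq 0$ and $f'(x_0)=0$, the same substitution combined with $\psi_\bullet''=(V-E_\bullet)\psi_\bullet$ yields $f''(x_0)\neq 0$, so away from the finitely many zeros of $\psi_\alpha$ each zero of $f$ has order at most two. This makes $V^\star$ constant on each of finitely many intervals. I expect (B) to be the main obstacle: the variational step (A) is routine box-constrained optimization, but excluding a fat zero set of $f$ must use that $\psi_\alpha,\psi_\beta$ solve second-order ODEs — here packaged in the Wronskian identity $W'=(\alpha-\beta)\psi_\alpha\psi_\beta$ — rather than any soft regularity of $f$.
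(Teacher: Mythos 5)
Your proof is correct in its main claims and follows the same broad strategy as the paper --- first-order optimality conditions plus ODE rigidity of the band-edge eigenfunctions --- but the execution of the crucial measure-theoretic step is genuinely different and, in fact, more careful than the paper's. The paper perturbs on the interior set $A=\{x: 0<\tilde V(x)<V_+\}$ to obtain $\alpha\psi_\beta^2=\beta\psi_\alpha^2$ a.e.\ on $A$, then asserts this identity ``on an interval'' and applies $H_{\tilde V}$ to both sides of $\sqrt{\alpha}\,\psi_\beta=\sqrt{\beta}\,\psi_\alpha$ to force $\alpha=\beta$; this step is loose, because $A$ is merely measurable and the identity holds only a.e.\ on it, so one cannot directly apply a differential operator to it. Your step (B) repairs exactly this: at density points of the indifference set you get $f=f'=0$ for the $C^1$ function $f=\psi_\beta^2/\beta-\psi_\alpha^2/\alpha$, hence vanishing of the Wronskian $W$, and then the genuinely $C^1$ identity $W'=(\alpha-\beta)\psi_\alpha\psi_\beta$ yields a contradiction at density points of $\{W=0\}\cap S$. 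You also prove something slightly stronger than the paper does: the full indifference set $\{f=0\}$ is null, not merely $A$ (which is contained in $\{f=0\}$ up to a null set by the first-order condition), so \eqref{eq:Vstar} describes $\tilde V$ almost everywhere. Your step (A), the box-constrained linear-functional argument, is equivalent to the paper's one-sided perturbations on $\{\tilde V=0\}$ and $\{\tilde V=V_+\}$; both are routine.

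One caveat: your closing sketch of the piecewise-constant refinement (isolated zeros of $f$) is the one place with a real gap. The paper does not prove this claim inside Theorem \ref{thm:BangBang} at all; it is deferred to Theorem \ref{thm:Vstep}, which is proved via the Sturm Oscillation Theorem. Your computation of $f''(x_0)\neq 0$ treats $f''$ pointwise although $f$ is only $C^{1,1}$ when $V\in L^\infty$ (this is fixable, since the $V$-dependent terms cancel to leading order and $f''$ has a.e.\ constant sign near $x_0$), but more substantively your argument only rules out accumulation of zeros of $f$ at points where $\psi_\alpha\neq 0$; you would still need to exclude accumulation at the finitely many common zeros of $\psi_\alpha$ and $\psi_\beta$, which is precisely the degenerate case the paper's oscillation-theory argument with matched Cauchy data is designed to handle.
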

\begin{proof}
Let $\tilde{V} \in \ad$ be any local maximizer. 
Consider the set $A = \{x\in [0,X]\colon  0 <\tilde{V}(x) < V_{+} \} $ and let $S\subset A$ be arbitrary. 
For $\delta V(x)= 1_{S}(x)$, the indicator function on $S$, by Lemma \ref{lem:derivJ}, local optimality of $\tilde{V}$ requires 
\begin{equation}
\label{eq:opt}
\langle \frac{\delta G}{\delta V} [\tilde{V}], 1_{S} \rangle = 0  
\qquad \iff \qquad   
\alpha \psi_\beta ^2 = \beta  \psi_\alpha^2 \quad \text{ a.e.  on  } A,
\end{equation}
where $(\alpha, \psi_{\alpha})$ and $(\beta, \psi_\beta)$ are eigenpairs for $\tilde{V}$. We consider an interval where both $\psi_\alpha$ and $\psi_\beta \neq 0$. Multiplying $\psi_\alpha$ by $-1$ if necessary, we have that 
$$
\sqrt{\alpha} \psi_\beta =  \sqrt{\beta} \psi_\alpha. 
$$
Applying $H_V$ to both sides, we obtain 
$$
\sqrt{\alpha} \beta \psi_\beta =  \sqrt{\beta} \alpha \psi_\alpha
\qquad \implies \qquad 
\beta = \alpha.
$$
But this contradicts the assumption that the gap is nonempty. Thus, $A$ has zero measure, \ie, $\tilde{V}(x) \in \{0,V_+ \}$ for a.e.  $x\in[0,X]$. 

We now consider a set $\Omega_- = \{x\in [0,X]\colon  \tilde{V}(x) \equiv 0\}$ and let $S\subset \Omega_-$ be arbitrary. The perturbation $\delta V(x) = 1_S(x)$ is admissible. Local optimality requires that
$$
\langle \frac{\delta G}{\delta V} [\tilde{V}], 1_{S} \rangle \leq 0  
\qquad \iff \qquad   
\alpha \psi_{\beta}^2 \leq  \beta  \psi_{\alpha}^2   \quad \text{ a.e.  on  } \Omega_-,
$$
as desired. 

A similar perturbation argument for the set $\Omega_+ = \{x\in [0,X]\colon  \tilde{V}(x) \equiv V_+\}$ completes the proof. 
\end{proof}

\begin{thm} \label{thm:Vstep} Let $d=1$, fix $m \in \mathbb N^{+}$, and let $\tilde V(x)  \in \ad$ be a local maximizer of $G_m$ with $G_m (\tilde V) > 0$. Then there are only a finite number of transitions between where $\tilde V$ is $0$ and $V_+$ and therefore $\tilde V$ is a step function. 
\end{thm}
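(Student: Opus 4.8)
The plan is to locate the transitions of $\tilde V$ at the sign changes of a switching function and to show these are finite in number through a careful local analysis of the band-edge eigenfunctions. By Theorem \ref{thm:BangBang}, $\tilde V = V_+$ on $\Omega_+ = \{\psi_\alpha^2/\alpha < \psi_\beta^2/\beta\}$ and $\tilde V = 0$ on $\Omega_-$, so every transition between the two values occurs at a sign change of $w := \alpha\,\psi_\beta^2 - \beta\,\psi_\alpha^2$. The first step is to factor $w = g\,h$ with $g := \sqrt\alpha\,\psi_\beta - \sqrt\beta\,\psi_\alpha$ and $h := \sqrt\alpha\,\psi_\beta + \sqrt\beta\,\psi_\alpha$; since the transitions lie in $\{g=0\}\cup\{h=0\}$, it suffices to prove that each of $g$ and $h$ has only finitely many zeros in $[0,X]$. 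Because $V\in L^\infty$, the eigenfunctions $\psi_\alpha,\psi_\beta$ are real-valued and belong to $C^1$ with Lipschitz first derivatives, so $g$ and $h$ are $C^1$ with Lipschitz derivatives, and I would work only with this regularity.

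The engine of the argument is the identity obtained by differentiating $g$ twice and using \eqref{eq:SpectralProblem2a}: away from the (measure-zero) discontinuity set of $V$ one has $g'' = V g + \Phi$ with $\Phi := \alpha\sqrt\beta\,\psi_\alpha - \beta\sqrt\alpha\,\psi_\beta$, and analogously for $h$. I would first record that each eigenfunction has only isolated zeros, since a common zero of $\psi_\alpha$ and $\psi_\alpha'$ would force $\psi_\alpha\equiv 0$ by uniqueness for the second-order ODE; hence on the compact interval $[0,X]$ the zero sets of $\psi_\alpha$ and $\psi_\beta$ are finite, and so is the set $Z$ of their common zeros. Now fix a zero $x_\star$ of $g$ with $x_\star\notin Z$; then $g(x_\star)=0$ forces $\sqrt\alpha\,\psi_\beta(x_\star)=\sqrt\beta\,\psi_\alpha(x_\star)$ with $\psi_\alpha(x_\star)\neq0$, and a one-line substitution gives $\Phi(x_\star)=(\alpha-\beta)\sqrt\beta\,\psi_\alpha(x_\star)\neq0$. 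If $g'(x_\star)\neq0$ the zero is simple; if $g'(x_\star)=0$, the crucial point is that the term $V g$ in the identity vanishes at $x_\star$ because $g(x_\star)=0$, so the essential second derivative of $g$ tends to $\Phi(x_\star)\neq0$, and integrating twice yields $g(x)=\tfrac12\Phi(x_\star)(x-x_\star)^2 + o\big((x-x_\star)^2\big)$, an isolated zero of order two.

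It then remains to rule out accumulation of zeros of $g$ at the finitely many points of $Z$. Near $z_0\in Z$ both eigenfunctions vanish with nonzero derivatives, so $g(x)\approx\big(\sqrt\alpha\,\psi_\beta'(z_0)-\sqrt\beta\,\psi_\alpha'(z_0)\big)(x-z_0)$; if this coefficient is nonzero the zero is simple, and if it vanishes one passes to third order, where $\Phi'(z_0)=(\alpha-\beta)\sqrt\beta\,\psi_\alpha'(z_0)\neq0$ forces $g(x)=\tfrac16\Phi'(z_0)(x-z_0)^3 + o\big((x-z_0)^3\big)$, again isolated. The same analysis applies verbatim to $h$, so $g$ and $h$ have only isolated zeros on $[0,X]$, hence finitely many; thus $w$ changes sign finitely often, $\tilde V$ has finitely many transitions, and is a step function. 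The main obstacle, and the reason one cannot simply invoke analyticity, is that $V$ is a priori only bounded (we have not yet shown it is piecewise constant), so $g$ is merely $C^{1,1}$ and its zeros need not be isolated for a generic such function; the argument succeeds precisely because in $g''=V g+\Phi$ the potential enters only through the factor multiplying $g$ itself, which vanishes to the relevant order at a zero of $g$, leaving the leading-order behavior governed by the $V$-independent quantity $\Phi$.
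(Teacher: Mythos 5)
Your proof is correct, but it takes a genuinely different route from the paper's. Both arguments start from the same reduction: by Theorem \ref{thm:BangBang}, transitions of $\tilde V$ lie in the zero set of the switching function $\alpha\psi_\beta^2-\beta\psi_\alpha^2$, and the heart of the matter is the degenerate situation $g(x_\star)=g'(x_\star)=0$ for $g=\sqrt{\alpha}\,\psi_\beta-\sqrt{\beta}\,\psi_\alpha$ (or for $h$). The paper argues by contradiction and globally: an accumulation point of transition points produces, after passing to subsequences and normalizing signs, exactly this matching Cauchy data for $\psi_\alpha$ and $\tilde\psi_\beta=\sqrt{\alpha/\beta}\,\psi_\beta$; it then invokes the Sturm Oscillation Theorem (band-edge eigenfunctions of the $m$-th gap have the same number of zeros on any interval of length $X$) together with a Wronskian integral identity to show $\tilde\psi_\beta$ would acquire two extra zeros between consecutive zeros of $\psi_\alpha$ straddling $x_\star$ --- a contradiction. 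You instead argue locally and directly: from $g''=Vg+\Phi$ with $\Phi=\alpha\sqrt{\beta}\,\psi_\alpha-\beta\sqrt{\alpha}\,\psi_\beta$, you check that at any zero of $g$ the quantity $\Phi$ (or, at a common zero of the eigenfunctions, $\Phi'$) is nonzero precisely because $\beta>\alpha$, so a Taylor-type expansion of the $C^{1,1}$ function $g$ forces every zero to have order at most three, hence to be isolated, and compactness of $[0,X]$ gives finiteness. Your handling of the only real analytic obstacle --- that $g$ is merely $C^{1,1}$ since $V$ is at this stage only known to be in $L^\infty$ --- is exactly right: the potential enters the ODE only through the coefficient multiplying $g$ itself, which is negligible to the relevant order at a zero of $g$. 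What your approach buys is self-containedness (no oscillation theory or per-period zero counting) and slightly more information (an explicit bound on the order of vanishing of the switching function); what the paper's approach buys is brevity given the classical Hill's-equation machinery, and its Wronskian technique is reused in the proof of Theorem \ref{thm:OneInterval}.
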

\begin{proof} Suppose there are an infinite number of transition points $\{x_j\}$. Then there exists an accumulation  point, say $x_\star \in [0, X)$, such that, along a subsequence which we again denote by $\{x_j\}$,  $x_j \to x_\star$. 
By \eqref{eq:Vstar}, at each $x_j$, we have $\beta \psi_\alpha^2(x_j) = \alpha \psi_\beta^2(x_j)$. Taking $\psi_\alpha (x_\star) \geq 0$ and $\psi_\beta (x_\star) \geq 0$, we  can pass to a further subsequence so that $\sqrt{\beta} \psi_\alpha(x_j) = \sqrt{\alpha} \psi_\beta (x_j)$. Taking the limit as $x_j \to x_\star$, we obtain 
$$
\sqrt{\beta} \psi_\alpha(x_\star) = \sqrt{\alpha} \psi_\beta (x_\star). 
$$
We also have that 
$$
0 = \lim_{j \to \infty} \frac{ \left( \sqrt{\beta} \psi_\alpha(x_j) - \sqrt{\alpha} \psi_\beta (x_j) \right) - \left( \sqrt{\beta} \psi_\alpha(x_\star) - \sqrt{\alpha} \psi_\beta (x_\star) \right) }{x_j - x_\star} = 
\sqrt{\beta} \psi_\alpha ' (x_\star) - \sqrt{\alpha} \psi_\beta ' (x_\star),
$$
where the prime denotes a spatial derivative. 
Define
$$
\tilde \psi_\beta = \sqrt{ \frac{\alpha}{\beta} } \psi_\beta 
\qquad \textrm{ so that } \qquad 
\tilde \psi_\beta(x_\star) = \psi_\alpha(x_\star) 
\quad \textrm{ and } \quad 
\tilde \psi_\beta'(x_\star) = \psi_\alpha'(x_\star). 
$$
By assumption, $G_m (\tilde V) > 0$ which implies that $\beta > \alpha$. 
It follows that $(\beta, \tilde \psi_\beta)$ and $(\alpha, \psi_\alpha)$ are periodic or semi-periodic eigenpairs satisfying \eqref{eq:SpectralProblem} for different values of $E$, but have the same Cauchy data at $x = x_\star$. We show that this is a contradiction. We recall that the Sturm Oscillation Theorem implies that $\psi_\alpha$ and $\psi_\beta$ take the same number of zeros on any interval of length $X$ \cite[Theorem 3.1.2]{Eastham1973}. 

Without loss of generality, we may assume that $\psi_\alpha(x_\star) = \psi_\beta(x_\star) > 0$. If $\psi_\alpha(x_\star) = \psi_\beta(x_\star) = 0$, then since $\beta > \alpha$, the Sturm Oscillation Theorem would imply that $\psi_\beta$ takes at least one more zero on $[x_\star, x_\star + X)$ than $\psi_\alpha$, but this is a contradiction. 

Let $a,b$ be successive zeros of $\psi_\alpha$ with $x_\star \in (a,b)$. \emph{Claim:}  The solution $\tilde \psi_\beta$ takes two zeros in $(a,b)$: one in $(a,x_\star)$ and another in $(x_\star, b)$. But this completes the proof since $\psi_\beta$ must also take a zero between any other consecutive zeros of $\psi_\alpha$, contradicting the fact that they take the same number of zeros on any interval of length $X$. 

To prove the claim, consider the \emph{Wronskian},  $W(x) = \tilde \psi_\beta (x) \psi_\alpha'(x) - \psi_\alpha(x) \tilde \psi_\beta' (x)$.  Using \eqref{eq:SpectralProblem}, we compute  
$$
W(x) = (\beta- \alpha) \int_{x_\star}^x \psi_\alpha(y) \tilde\psi_\beta(y)  dy. 
$$

Suppose $\tilde \psi_\beta > 0$ on $(a, x^\star)$. Then on one hand $W(a) = \tilde \psi_\beta(a) \psi_\alpha'(a) > 0$ and on the other $W(a) = - (\beta-\alpha) \int_a^{x_\star} \psi_\alpha(y) \tilde \psi_\beta(y) dy < 0$ which is a contradiction. 

Similarly, suppose $\tilde \psi_\beta > 0$ on $(x^\star,b)$. Then on one hand $W(b) = \tilde \psi_\beta(b) \psi_\alpha'(b) < 0$ and on the other $W(b) = (\beta-\alpha) \int_{x_\star}^b \psi_\alpha(y) \tilde \psi_\beta(y) dy > 0$ which is a contradiction. 
\end{proof}

\subsection{Reduction of \eqref{e:opt} to the Kronig-Penney model for $m=1$.} 
The  optimality result in \eqref{eq:opt} means that the potential is bang-bang, {\it i.e.}, it attains the imposed  pointwise bounds almost everywhere. 

\begin{thm} \label{thm:OneInterval} For $m=1$, every locally optimal  potential of \eqref{e:opt}  with $\beta > \alpha$ can be translated to take the simple form  
$$
V_b(x) = \begin{cases} 
V_{+}  &  x \in [0,b]\\
0   &  x \in [b,X] .
\end{cases}
$$
where $b$ is a positive real number.
\end{thm}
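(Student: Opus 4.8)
The plan is to reduce the statement to a node-counting problem for the edge eigenfunctions and then settle that count with a single monotonicity argument. By Theorem~\ref{thm:BangBang} and Theorem~\ref{thm:Vstep}, any local maximizer $\tilde V$ with $\beta>\alpha$ is a step function taking only the values $0$ and $V_+$, and by \eqref{eq:Vstar} its jumps occur exactly at the sign changes of
\[
f(x) := \frac{\psi_\beta^2(x)}{\beta} - \frac{\psi_\alpha^2(x)}{\alpha},
\]
with $V=V_+$ on $\{f>0\}$ and $V=0$ on $\{f<0\}$. Hence $\tilde V$ has the claimed one-interval form on the circle $\mathbb R/X\mathbb Z$ if and only if $f$ changes sign exactly twice per period, after which I translate the single arc $\{V=V_+\}$ to $[0,b]$. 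So the theorem reduces to: \emph{for $m=1$, $f$ has exactly two sign changes on a period}.

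Next I would import the oscillation structure special to $m=1$. Since the first gap is odd, its edges $\alpha<\beta$ are the two lowest antiperiodic eigenvalues, and by the Sturm oscillation theorem for Hill's equation (as used already in Theorem~\ref{thm:Vstep}, \cite[Thm~3.1.2]{Eastham1973}) the real eigenfunctions $\psi_\alpha,\psi_\beta$ each have exactly one simple zero in $[0,X)$. Applying Sturm separation between the consecutive zeros $a$ and $a+X$ of $\psi_\alpha$, the unique zero $b$ of $\psi_\beta$ lies strictly in $(a,a+X)$, so after fixing signs I may assume $\psi_\alpha>0$ on $(a,a+X)$ and, with $a<b<a+X$, that $\psi_\beta<0$ on $(a,b)$ and $\psi_\beta>0$ on $(b,a+X)$.

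The heart of the argument is a monotonicity statement. Setting $\tilde\psi_\beta=\sqrt{\alpha/\beta}\,\psi_\beta$ and $\rho=\tilde\psi_\beta/\psi_\alpha$, one rewrites $f=\tfrac{\psi_\alpha^2}{\alpha}\,(\rho^2-1)$, so the sign changes of $f$ are exactly the points where $\rho=\pm1$. I would then show $\rho$ is strictly increasing on $(a,a+X)$, sweeping from $-\infty$ at $a^+$ (where $\psi_\alpha\to0^+$ and $\tilde\psi_\beta(a)<0$) to $+\infty$ at $(a+X)^-$ (where $\psi_\alpha\to0^+$ and $\tilde\psi_\beta(a+X)>0$); a strictly increasing function with this range meets each of $\pm1$ exactly once, giving precisely two sign changes of $f$. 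Monotonicity follows from the Wronskian $W=\tilde\psi_\beta\psi_\alpha'-\psi_\alpha\tilde\psi_\beta'$, for which $\rho'=-W/\psi_\alpha^2$ and, using \eqref{eq:SpectralProblem}, $W'=(\beta-\alpha)\psi_\alpha\tilde\psi_\beta$. Because $\tilde\psi_\beta$ changes sign only at $b$, $W$ is decreasing on $(a,b)$ and increasing on $(b,a+X)$, so its maximum on $[a,a+X]$ is attained at an endpoint; evaluating $W(a)=\tilde\psi_\beta(a)\psi_\alpha'(a)<0$ and $W(a+X)=\tilde\psi_\beta(a+X)\psi_\alpha'(a+X)<0$ (using $\psi_\alpha'(a)>0$ and $\psi_\alpha'(a+X)<0$) forces $W<0$ throughout, hence $\rho'>0$.

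The main obstacle is precisely this sign control of $W$: it is not monotone over the whole period (it dips to a negative minimum at $b$), so one cannot simply integrate $W'$; the clean route is the unimodality-plus-endpoint-sign observation above, which crucially uses that both edge eigenfunctions have a single node. The remaining points—realness and simplicity of $\psi_\alpha,\psi_\beta$, distinctness $a\neq b$, and the fact that $\{f>0\}$ and $\{f<0\}$ are each connected on the circle—are routine consequences of simplicity of the edge eigenvalues and the node count, and the final translation is immediate.
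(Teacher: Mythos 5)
Your proof is correct, and while it rests on the same two pillars as the paper's --- the Wronskian identity $W'=(\beta-\alpha)\psi_\alpha\psi_\beta$ together with the optimality condition \eqref{eq:Vstar}, which pins every transition point to a level set of the eigenfunction ratio --- you organize it around a different decomposition of the period, and that changes what the proof delivers. The paper cuts the circle at the zeros of \emph{both} edge eigenfunctions, so that on each of the two resulting arcs $\psi_\alpha$ and $\psi_\beta$ can be taken positive, the Wronskian is monotone starting from a positive endpoint value, and $\log(\psi_\alpha/\psi_\beta)$ is strictly monotone; this yields at most one transition per arc, and the theorem follows by contradiction. You instead work on the single full period $(a,a+X)$ between consecutive zeros of $\psi_\alpha$, inside which $\psi_\beta$ changes sign, so $W$ is not monotone; your extra device --- $W$ decreases on $(a,b)$, increases on $(b,a+X)$, and is negative at both endpoints, hence negative throughout --- restores strict monotonicity of the signed ratio $\rho=\tilde\psi_\beta/\psi_\alpha$, which sweeps from $-\infty$ to $+\infty$ and therefore crosses $\pm 1$ exactly once each. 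This buys a genuinely stronger intermediate statement: $f=\psi_\beta^2/\beta-\psi_\alpha^2/\alpha$ has \emph{exactly} two sign changes per period for \emph{any} admissible potential with an open first gap, independent of optimality; the one-interval form of a local maximizer is then read off directly from \eqref{eq:Vstar} rather than by contradiction, and as a by-product each iteration of Algorithm \ref{alg:Rearrange} with $m=1$ produces a one-interval potential whenever the current gap is open. Two small points: what you invoke as ``Sturm separation'' is really the Sturm \emph{comparison} theorem (the two eigenfunctions solve equations with different spectral parameters), though the conclusion you draw --- that the unique zero of $\psi_\beta$ lies strictly between consecutive zeros of $\psi_\alpha$ --- is exactly right; and since \eqref{eq:Vstar} is silent on $\{f=0\}$, your opening reduction implicitly needs this set to be negligible, which your monotonicity argument does establish (it consists of exactly two points), so the a.e.-defined potential agrees with the claimed one-interval form.
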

\begin{proof} 
We assume that $V^\star$ is a locally optimal potential for $m=1$ with more than two (but by Theorem \ref{thm:Vstep} a finite number) of transition points. Let $(\alpha, \psi_\alpha)$ and $(\beta, \psi_\beta)$ be the  eigenpairs corresponding to the spectral band edges of the first gap. Recall that $\psi_\alpha$ and $\psi_\beta$ vanish at exactly one point each, say $x_\alpha$ and $x_\beta$, with $x_\alpha \neq x_\beta$. By translating $x$ if necessary, we may assume $x_\beta > x_\alpha$. By changing signs if necessary, we may assume that $\psi_\alpha> 0$ and $\psi_\beta > 0$ on $(x_\alpha, x_\beta)$. 

We  consider the Wronskian, $W(x) =  \psi_\beta (x) \psi_\alpha'(x) - \psi_\alpha(x)  \psi_\beta' (x)$. Clearly $W(x_\alpha) > 0$ and $W'(x) = (\beta - \alpha) \psi_\alpha \psi_\beta > 0$ on $(x_\alpha, x_\beta)$. Thus, on $(x_\alpha, x_\beta)$ 
\begin{equation} 
\label{e:IncArg}
W>0 
\qquad \implies \qquad
\frac{\psi_\alpha' }{ \psi_\alpha } > \frac{\psi_\beta'}{ \psi_\beta} 
\qquad \implies \qquad
\frac{d}{dx} \log \left( \frac{\psi_\alpha(x)}{ \psi_\beta(x) }\right) > 0. 
\end{equation}
It follows that $\log \left( \frac{\psi_\alpha(x)}{ \psi_\beta(x) }\right)$ is strictly increasing on $(x_\alpha, x_\beta)$. 

We now suppose that there are more than one transition points of $V(x)$ in the interval $(x_\alpha, x_\beta)$. Let $y$ and $z$ be two such distinct points. By the optimality condition \eqref{eq:Vstar}, we have that 
$\psi_\alpha(x)/ \sqrt{\alpha} = \psi_\beta(x)/ \sqrt \beta$
at both $x= y$ and $x=z$. But this implies that 
$$
\log \left( \frac{\psi_\alpha(y)}{ \psi_\beta(y) }\right) = \log \left( \frac{\psi_\alpha(z)}{ \psi_\beta(z) }\right)
= \log \left( \frac{\sqrt \alpha}{\sqrt \beta} \right),
$$
which contradicts the fact that  $\log \left( \frac{\psi_\alpha(x)}{ \psi_\beta(x) }\right)$ is strictly increasing on $(x_\alpha, x_\beta)$. Thus, there can be only transition point in  $(x_\alpha, x_\beta)$. A similar argument shows that there can only be one transition point in $[0,X] \setminus  [x_\alpha, x_\beta]$.
\end{proof} 

Theorem \ref{thm:OneInterval} shows that the optimal potential is given by the Kronig-Penney model, which has been well-studied in solid-state physics \cite{kronig1931quantum}.  In this case, \eqref{e:opt} reduces to a one-dimensional optimization problem---find the value of $b \in [0,X]$ so that $G_1$ is maximized.

\begin{rem} \label{rem:OptSolNeuDir}
Since the interval can be translated so that the optimal potential is symmetric, it follows that the semi-periodic eigenfunctions are either symmetric or antisymmetric. It follows from the proof of Theorem \ref{thm:OneInterval}  that for the optimal potential for $m=1$,  $\psi_\beta'$ and $\psi_\alpha$ simultaneously vanish and visa-versa.
\end{rem}

\begin{rem} For the analogous Helmholtz problem, the maximal first spectral gap-to-midgap ratio  is obtained by the Bragg structure \cite{osting2012bragg}. For the Schr\"odinger operator, it isn't obvious if the optimal potential can be written explicitly. 
\end{rem}

\subsubsection{Numerical Computation} 
In the following, we develop some notation so that we can compute the solution to \eqref{eq:SpectralProblem2} in one dimension and find optimal potentials. 
Fix $X,V^+,b,k$. 
Denote 
\[
Q = \sqrt{V^{+}-E}, \qquad K = \sqrt{E}, \quad \textrm{and} \quad a=X-b. 
\]
Continuity of $\psi(x;k)$ and $\psi'(x;k)$ at $x=0$ and $x=b$ requires that $Q$ and $K$ satisfy 
\[
\frac{Q^{2}-K^{2}}{2QK}\sinh(Qb)\sin(Ka)+\cosh(Qb)\cos(Ka)=\cos(Xk).
\]
For $k=\pi/X$ ($\psi$ is an anti-periodic solution), this yields the  two equations
\begin{subequations} \label{e:findRoots}
\begin{align}
& \frac{Q^{2}-K^{2}}{2QK}\sinh(Qb)\sin(Ka)+\cosh(Qb)\cos(Ka)=-1 \\
& Q^2+ K^2 = V_+. 
\end{align}
\end{subequations}
The solutions $Q(E),K(E)$ of these equations determine the eigenvalues $E$ that correspond to the odd spectral gap edges. 

\begin{figure}[t]
\begin{center}
\includegraphics[width=.48\textwidth]{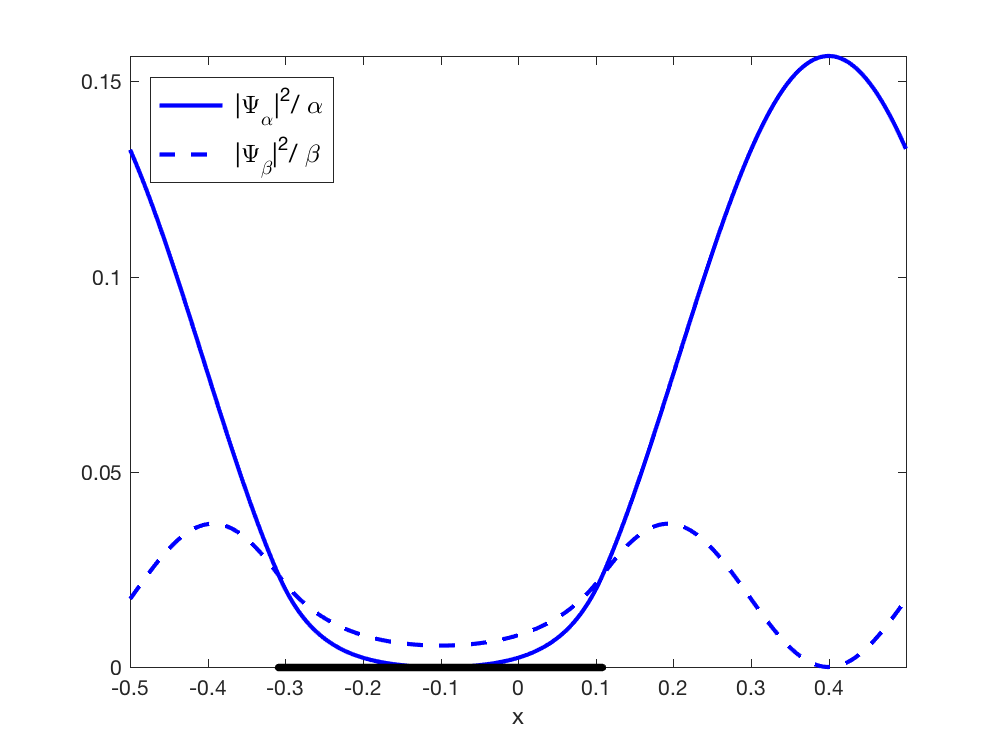}
\includegraphics[width=.48\textwidth]{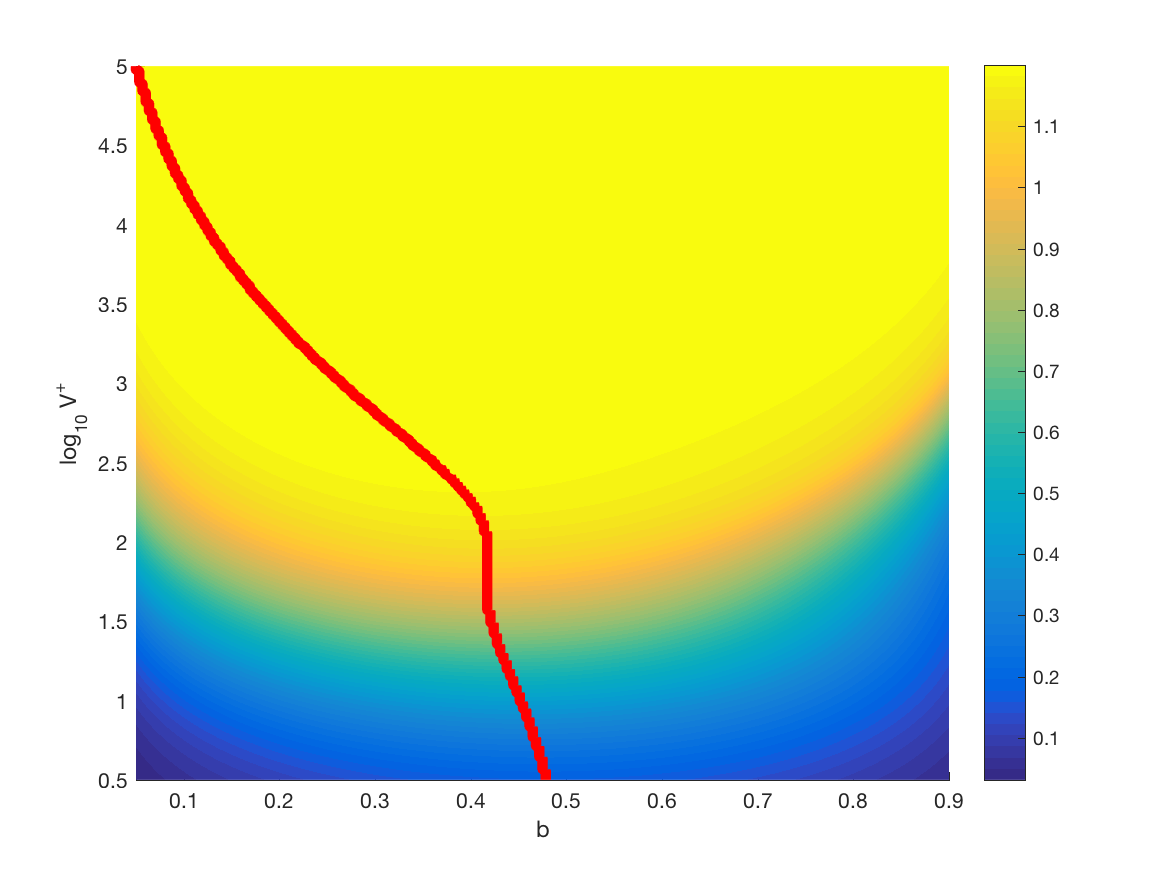}
\caption{{\bf (Left)} An illustration of the optimality condition in \eqref{eq:Vstar} for $X=1$, $V_+ = 100$, and $m=1$. The set $\{ x \colon  V(x) = V_+ \} = \{ x\colon |\psi_\alpha (x) |^2 / \alpha <  | \psi_\beta (x) |^2 / \beta \}$ is indicated on the $x$-axis by a thick black line. {\bf (Right)} Take $X=1$.  For different values of $b$ (x-axis) and $V_+$ (y-axis), we plot the contours of  $G$. For each value of $V_+$, the value of $b$ that maximizes $G_1$ is indicated by the red line. }
\label{f:bvsGJ}
\end{center}
\end{figure}

\begin{figure}[t]
\begin{center}
\includegraphics[width=.48\textwidth]{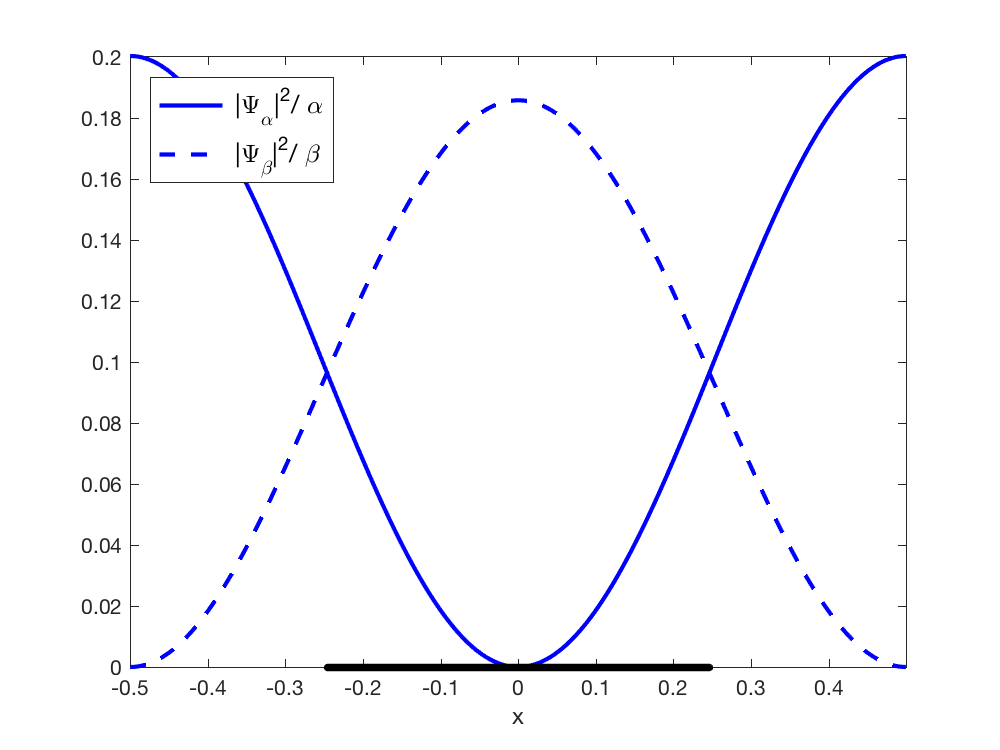}
\includegraphics[width=.48\textwidth]{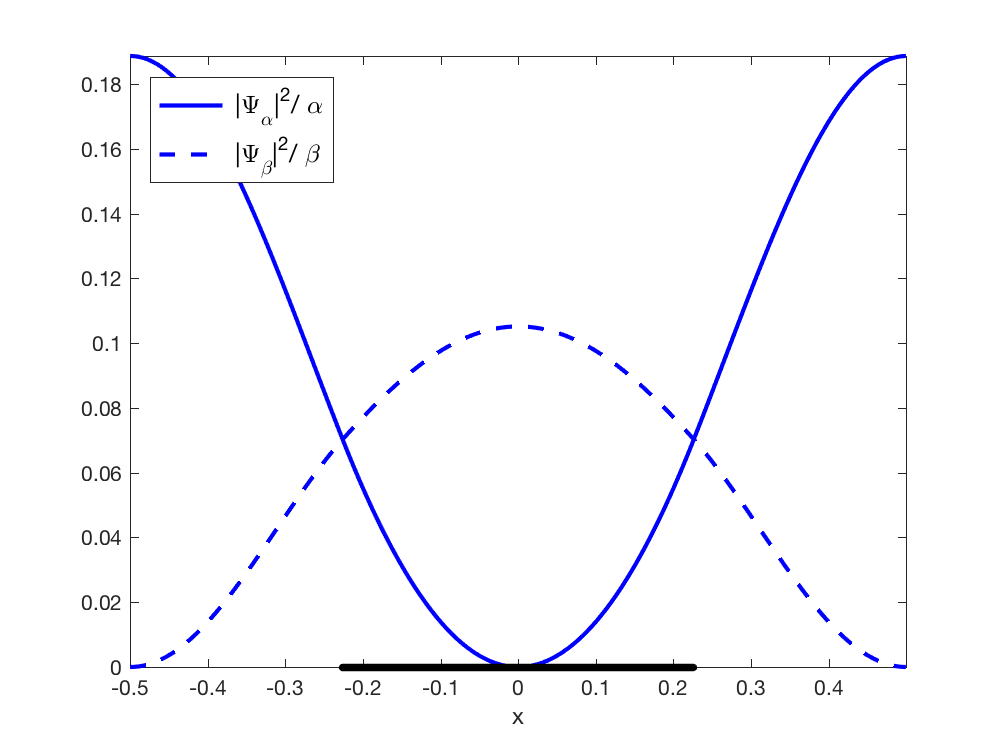}
\includegraphics[width=.48\textwidth]{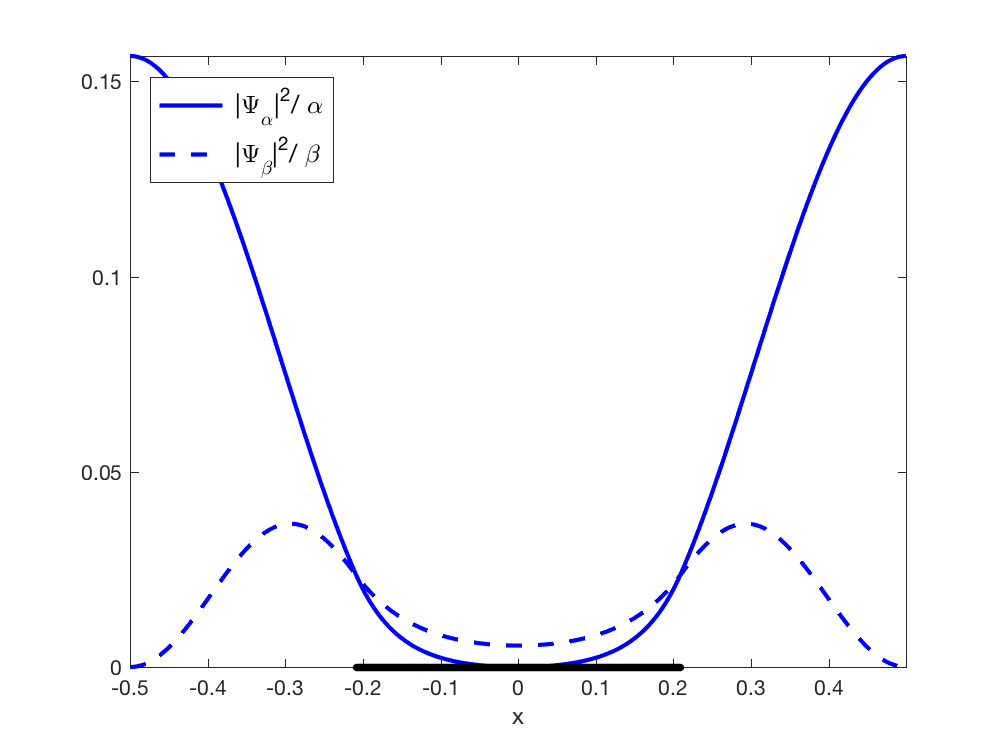}
\includegraphics[width=.48\textwidth]{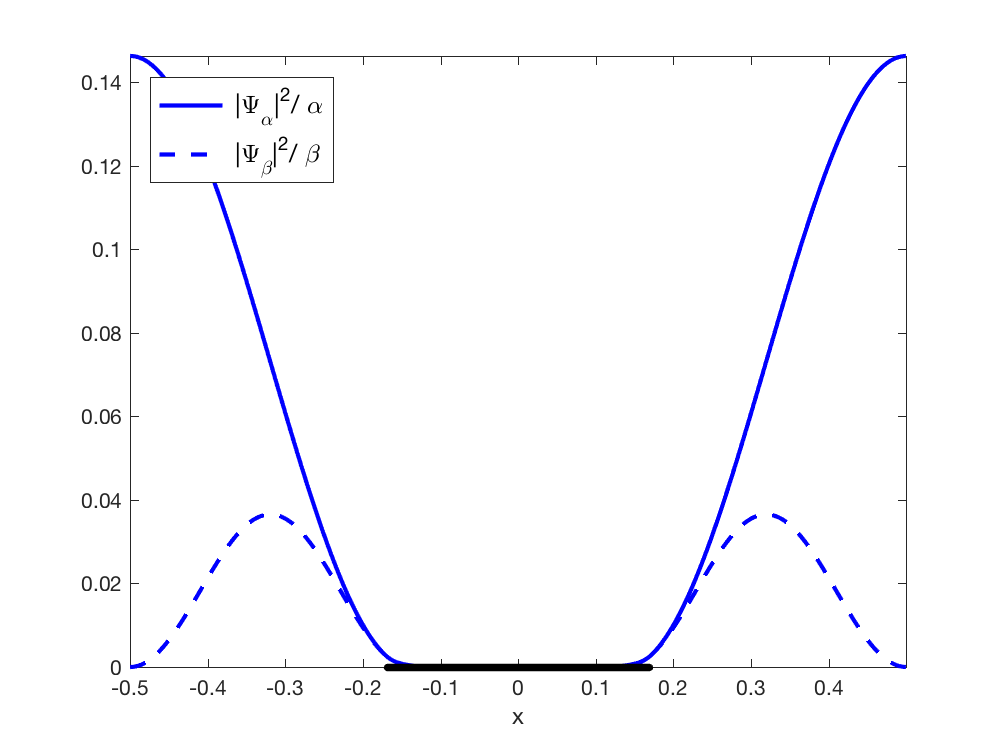}
\caption{Let $X=1$ and $m=1$. For  
$V_+ = 1$ (top left),  
$V_+ = 10$ (top right), 
$V_+ = 100$ (bottom left), and
$V_+ = 1000$ (bottom right), 
we plot the eigenfunctions corresponding to the optimal potential. The shape of the eigenfunctions change with respect to $V_+$; see text for a discussion. In particular, the eigenfunctions converge to the eigenfunctions of the Dirichlet-Laplace operator on the set $\{V=0\}$. }
\label{f:m1EigFunPlot}
\end{center}
\end{figure}

Thus, the objective can be  evaluated by  solving either \eqref{eq:SpectralProblem}, \eqref{eq:SpectralProblem2},  or \eqref{e:findRoots}. In Figure \ref{f:bvsGJ}(left), for fixed $V_+$, we illustrate that the  eigenfunctions corresponding to the optimal potential satisfy the optimality conditions \eqref{eq:Vstar}.  In Figure \ref{f:bvsGJ}(right) we plot the optimal value of $b$ for different values of $\log_{10}V_+$. From the plot we additionally observe that the value of $b$ which maximizes $G_1$ is unique. 

In Figure \ref{f:m1EigFunPlot}, we study how the eigenfunctions change as $V_+$ is varied. It is known that as $V_+\to \infty$, the eigenfunctions vanish on the set $\{V=V_+\}$ (see Proposition \ref{prop:ConvVinfty}).  In particular, for small $V_+$, say $V_+=1$ as in the top left panel, the second eigenfunction ($\psi_\beta$) takes large values in the region $\{V=V_+\}$. However, as $V_+$ is increased, the second eigenfunction takes smaller values on this region; the eigenfunction transitions from having a single maximum to having two. As $V_+ \to \infty$, the eigenfunctions converge to the Dirichlet-Laplace eigenfunctions for the set $\{V=0\}$. 

\subsubsection{Asymptotics for $m=1$} 
Here, we consider the optimal value of $b$ as  $V_+ \to 0$ and $V_+ \to \infty$. 

\begin{lem} \label{l:V+to0} 
Using the notation of Theorem \ref{thm:OneInterval}, as $V_+ \to 0$, the optimal value of $b$ is $X/2$. 
\end{lem}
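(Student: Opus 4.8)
The plan is to treat $V_+ \to 0$ as a weak-coupling limit and extract the optimal $b$ from first-order perturbation theory about the free operator. Since $m=1$ is odd, the gap edges $\alpha$ and $\beta$ are the two lowest eigenvalues of the anti-periodic problem on $[0,X]$, i.e. at quasi-momentum $k = \pi/X$. At $V_+ = 0$ these coincide at $\lambda_0 = (\pi/X)^2$, with a two-dimensional eigenspace spanned by the $L^2([0,X])$-normalized functions $c(x) = \sqrt{2/X}\,\cos(\pi x/X)$ and $s(x) = \sqrt{2/X}\,\sin(\pi x/X)$; the next anti-periodic eigenvalue $(3\pi/X)^2$ is bounded away, so $\lambda_0$ is an isolated, doubly degenerate level. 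The perturbation is $\delta V = V_+\,1_{[0,b]}$.

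First I would apply degenerate (Kato/Rayleigh--Schr\"odinger) perturbation theory: the splitting of $\lambda_0$ to first order in $V_+$ is governed by the eigenvalues of the symmetric $2\times 2$ matrix $M$ with entries $M_{cc} = \langle c, \delta V\, c\rangle$, $M_{ss} = \langle s, \delta V\, s\rangle$, $M_{cs} = \langle c, \delta V\, s\rangle$ (these are the natural degenerate generalization of the Hellmann--Feynman formula in Lemma~\ref{prop:sens}). Writing $u = 2\pi b/X$, a direct computation gives $M_{cc} - M_{ss} = \tfrac{V_+}{\pi}\sin u$ and $2M_{cs} = \tfrac{V_+}{\pi}(1 - \cos u)$. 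The difference of the two eigenvalues of $M$ is then
\[
\sqrt{(M_{cc}-M_{ss})^2 + 4 M_{cs}^2} = \frac{V_+}{\pi}\sqrt{\sin^2 u + (1-\cos u)^2} = \frac{2V_+}{\pi}\left|\sin\!\left(\tfrac{\pi b}{X}\right)\right|,
\]
using $\sin^2 u + (1-\cos u)^2 = 4\sin^2(u/2)$. Hence $\beta - \alpha = \tfrac{2V_+}{\pi}\sin(\pi b/X) + O(V_+^2)$, while both edges tend to $\lambda_0$, so the midgap satisfies $(\alpha+\beta)/2 = (\pi/X)^2 + O(V_+)$. Therefore $V_+^{-1} G_1(b,V_+) \to \tfrac{2X^2}{\pi^3}\sin(\pi b/X)$.

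To conclude that the optimizer converges to $X/2$, I would observe that this limit function is nonnegative on $[0,X]$ with a unique strict maximum at $b = X/2$; combined with the fact that the convergence $V_+^{-1}G_1(\cdot,V_+) \to \tfrac{2X^2}{\pi^3}\sin(\pi \cdot/X)$ is uniform in $b$, a standard argument forces the maximizers $b^\star(V_+)$ to converge to $X/2$ as $V_+ \to 0$. As a consistency check, the leading term equals $2|c_1|$, where $c_1 = \tfrac{1}{X}\int_0^X V_b(x)\,e^{-2\pi i x/X}\,dx$ is the first Fourier coefficient of the potential, which recovers the classical weak-coupling gap-width asymptotics for Hill's equation and is manifestly maximized when $b = X/2$.

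The main obstacle is making these asymptotics \emph{uniform} in $b$ and rigorous despite the degeneracy at $V_+ = 0$: I must bound the $O(V_+^2)$ remainder in the gap and the $O(V_+)$ remainder in the midgap independently of $b \in [0,X]$. This follows from Kato analytic perturbation theory applied to the total spectral projection onto the two-dimensional level, since the resulting error constants depend only on $\|\delta V\|_\infty = V_+$ and on the distance from $\lambda_0$ to the rest of the anti-periodic spectrum, both of which are uniform in $b$. A secondary point to verify is that the two perturbed eigenvalues are exactly the gap edges $\alpha, \beta$; this is immediate because for $m=1$ the edges are attained at $k=\pi/X$, precisely the anti-periodic problem analyzed here.
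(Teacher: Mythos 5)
Your proposal is correct, and it rests on the same underlying idea as the paper's proof---first-order perturbation theory about the doubly degenerate anti-periodic eigenvalue $(\pi/X)^2$ at $V_+=0$---but it executes that idea by a genuinely more careful route. The paper's argument is two lines: it invokes the derivative formula of Lemma~\ref{lem:derivJ}, identifies the degenerate eigenfunctions $\cos(\pi x/X)$ and $\sin(\pi x/X)$, and asserts that the largest perturbation occurs when $\Omega_+=\{\,x : |\cos(\pi x/X)|>|\sin(\pi x/X)|\,\}$, which by translation is an interval of length $X/2$. This is really a self-consistency (rearrangement-type) argument: it tacitly assumes that $\cos$ and $\sin$ are the correct zeroth-order eigenfunctions, which holds only when the interval is placed symmetrically so that the secular matrix is diagonal in that basis, and it applies a formula (Lemma~\ref{lem:derivJ}) whose hypothesis of a simple eigenvalue fails exactly at $V_+=0$. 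Your proof removes both issues: by computing the full $2\times 2$ secular matrix for an arbitrary interval $[0,b]$ you obtain the basis-independent splitting $\beta-\alpha = \tfrac{2V_+}{\pi}\bigl|\sin(\pi b/X)\bigr| + O(V_+^2)$ (your trigonometric computation checks out, and the identification with $2|c_1|$, the first Fourier coefficient of $V_b$, is a nice sanity check against the classical weak-coupling gap asymptotics for Hill's equation), and the uniformity-in-$b$ of the error terms via Kato's theory is precisely what is needed to conclude that the maximizers $b^\star(V_+)$ actually converge to $X/2$, a step the paper's proof passes over in silence. In short, what the paper's approach buys is brevity and consistency with the rearrangement philosophy of the rest of Section~\ref{sec:1D}; what yours buys is a rigorous treatment of the degeneracy, an explicit limiting objective $\tfrac{2X^2}{\pi^3}\sin(\pi b/X)$ valid for every $b$, and a legitimate argmax-convergence conclusion.
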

\begin{proof} 
We apply the perturbation formula in Lemma \ref{lem:derivJ}. For $V_+=0$ the anti-periodic eigenfunctions are $\sin(\pi x/ X)$ and $\cos(\pi x / X)$ which both correspond to the spectral value $E = \pi^2/X^2$. The largest perturbation will occur if we set $\Omega_+ = \{ x \in [0,X]\colon | \cos(\pi x / X) | >  | \sin( \pi x / X) | \}$. Using periodicity, this corresponds to taking $b=X/2$.  
\end{proof}

\begin{lem} \label{l:V+toinfty}
As $V_+ \to \infty$, the value of $G_1$ for any $b$ and any $X$  is $\frac{6}{5}$. 
\end{lem}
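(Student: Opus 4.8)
\textbf{Proof proposal for Lemma \ref{l:V+toinfty}.}

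The plan is to compute $G_1$ in the limit $V_+ \to \infty$ by passing to the limiting spectral problem. As established in the preceding discussion (and formalized in Proposition \ref{prop:ConvVinfty}), as $V_+ \to \infty$ the eigenfunctions vanish on the set $\{V = V_+\} = [0,b]$, so the limiting band-edge eigenfunctions are supported on the complementary interval $[b,X]$ of length $a = X - b$, where they satisfy $-\psi'' = E\psi$ with Dirichlet-type conditions inherited from the vanishing on $[0,b]$. First I would identify precisely which boundary conditions the band-edge eigenfunctions $\psi_\alpha$ and $\psi_\beta$ satisfy on the free interval in the limit. Since $m=1$ corresponds to the odd gap, the relevant eigenproblem is the anti-periodic one ($k = \pi/X$); in the infinite-contrast limit the eigenfunction must vanish on all of $[0,b]$, which forces it to vanish at both endpoints of the free interval $[b,X]$ by continuity, giving Dirichlet conditions there.

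The key computation is then to determine the two band edges $\alpha$ and $\beta$ of the first gap for the Dirichlet-Laplacian on an interval of length $a$, but with the anti-periodic matching encoded in how the single free interval sits inside the period cell. Concretely, I expect $\alpha$ and $\beta$ to be given by the first two Dirichlet eigenvalues of $-d^2/dx^2$ on an interval, namely $\alpha = (\pi/a)^2$ and $\beta = (2\pi/a)^2 = 4(\pi/a)^2$, so that $\beta = 4\alpha$. (This is consistent with the figure description, where the second eigenfunction develops two humps — the first excited Dirichlet state — as $V_+ \to \infty$.) Granting $\beta/\alpha \to 4$, the gap-to-midgap ratio has limit
\begin{equation*}
G_1 = \frac{\beta - \alpha}{(\alpha + \beta)/2} = \frac{4\alpha - \alpha}{(\alpha + 4\alpha)/2} = \frac{3\alpha}{(5/2)\alpha} = \frac{6}{5},
\end{equation*}
which is manifestly independent of $a$ (hence of $b$) and of $X$, as claimed. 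The independence is structural: $G_1$ depends only on the ratio $\beta/\alpha$, and that ratio is a pure number once the limiting problem is a fixed Dirichlet problem rescaled by the interval length.

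The main obstacle is justifying the convergence of the band edges $\alpha = \alpha(V_+)$ and $\beta = \beta(V_+)$ to the claimed Dirichlet eigenvalues, and in particular verifying that the correct limiting boundary conditions are Dirichlet on $[b,X]$ for both edges and that no spurious spectrum from the shrinking support on $[0,b]$ interferes. I would handle this by invoking Proposition \ref{prop:ConvVinfty} to get $L^2$-convergence of eigenfunctions and convergence of eigenvalues to those of the Dirichlet-Laplacian on $\{V=0\}$, then carefully tracking that $\alpha$ and $\beta$ are the first and second such eigenvalues (using the Sturm oscillation / nodal-count information already exploited in Theorem \ref{thm:Vstep}, since $\psi_\alpha$ has no interior zero and $\psi_\beta$ has exactly one). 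The remaining subtlety is that the free interval length $a$ appears in both $\alpha$ and $\beta$ through the same scaling factor $(\pi/a)^2$; I would make explicit that this common factor cancels in $G_1$, which is exactly what yields a limit independent of $b$ and $X$.
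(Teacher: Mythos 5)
Your proposal is correct and follows essentially the same route as the paper: in the limit $V_+ \to \infty$ the eigenfunctions vanish on $\{V = V_+\}$, so the band edges become the first two Dirichlet-Laplace eigenvalues $(\pi/a)^2$ and $(2\pi/a)^2$ on the free interval of length $a = X-b$, whose ratio $4$ yields $G_1 = 2\cdot\frac{4-1}{4+1} = \frac{6}{5}$ independently of $b$ and $X$. The extra care you take in justifying the convergence of band edges via Proposition \ref{prop:ConvVinfty} and nodal counts is a sound elaboration of what the paper states in one line, not a different argument.
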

\begin{proof} 
As $V_+ \to \infty$, the potential barrier forces the eigenfunction to be zero on $\Omega_+$. In this case, we get a Dirichlet-Laplace eigenvalue equation with eigenvalues $\left( \frac{n\pi}{ X-b} \right)^2$, so the value of $G_1$ for any $b$ is given by $2\frac{2^2 - 1^2}{2^2 + 1^2} = \frac{6}{5}$. 
\end{proof}

The results in Lemmata \ref{l:V+to0} and \ref{l:V+toinfty} are observed in Figure \ref{f:bvsGJ}(right). From this plot, we also observe that the optimal value of $b$ tends to $0$ as $V_+ \to \infty$. 

\subsubsection{Rearrangement algorithm}
The idea for the rearrangement algorithm is to use the optimality criterion  \eqref{eq:Vstar} to define a sequence of potentials; see Algorithm \ref{alg:Rearrange}. In the first step, for fixed $V$, we compute the eigensolutions corresponding to the edges of the $m$-th spectral gap. In the second step, we redefine the potential via \eqref{eq:Vstar}. These steps are repeated until a potential satisfying the necessary conditions for optimality \eqref{eq:Vstar} is identified. 

In Figure \ref{f:OptCond}, we plot  iterations of Algorithm \ref{alg:Rearrange} for the first  gap ($m=1$) with $X=1$ and $V_+ = 100$. We observe that the algorithm converges in 10 iterations for the initial condition with $b = |\Omega_+|/|\Omega| = 0.8$. The optimal configuration has $|\Omega_+|/|\Omega| = 0.42$, as can also be seen in Figure \ref{f:bvsGJ}(left).

\begin{rem} \label{rem:MonDec}
We observe that the  value of $G_m$ is strictly increasing for non-stationary iterations of the rearrangement algorithm (Algorithm \ref{alg:Rearrange}). 
\end{rem}  

\begin{algorithm}[t!]
\caption{\label{alg:Rearrange} 
The rearrangement algorithm for the one-dimensional problem in \eqref{e:opt}. }
\vspace{.2cm}

\begin{algorithmic}[t]
\STATE{\bfseries Input:} Fix $V_+ > 0$, $m \in \mathbb N^{+}$. Initialize $V$ in $\mathcal A(V_+)$ defined in \eqref{eq:Ad}. 

\vspace{.2cm}

\WHILE {the potential is not stationary}
\STATE 1. Compute eigensolutions $(\alpha, \psi_\alpha)$ and $(\beta, \psi_\beta)$ satisfying \eqref{eq:SpectralProblem} corresponding the edges of the $m$-th spectral gap. 

\bigskip

\STATE 2. Rearrange the potential by defining  
$$
V(x) = \begin{cases} 
V_{+}  &  x \in  \{ x \colon  \psi_\alpha^2 (x)/ \alpha < \psi_\beta^2 (x) / \beta  \} \\
0   &  x \in  \textrm{otherwise}.
\end{cases}
$$
\ENDWHILE
\end{algorithmic}
\end{algorithm}

\begin{figure}[t!]
\begin{center}
Iteration 0 \\ 
\includegraphics[width=.88\textwidth]{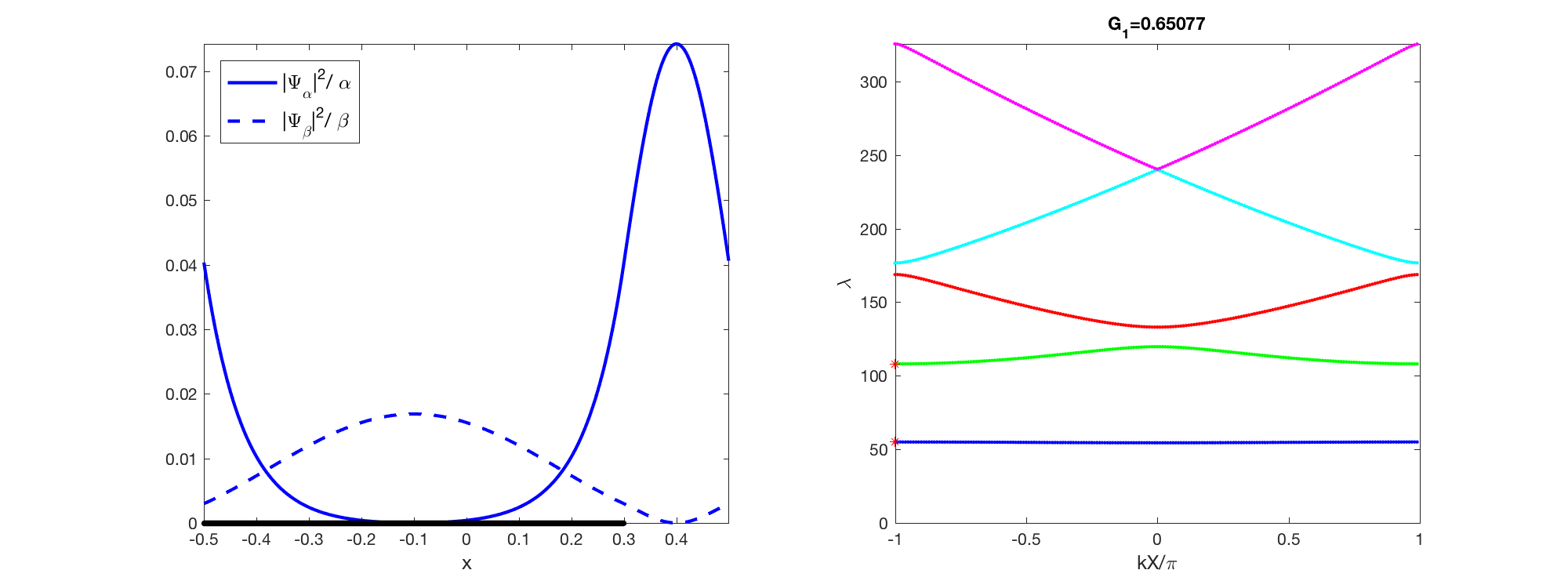} \\ \ \\
Iteration 1 \\
\includegraphics[width=.88\textwidth]{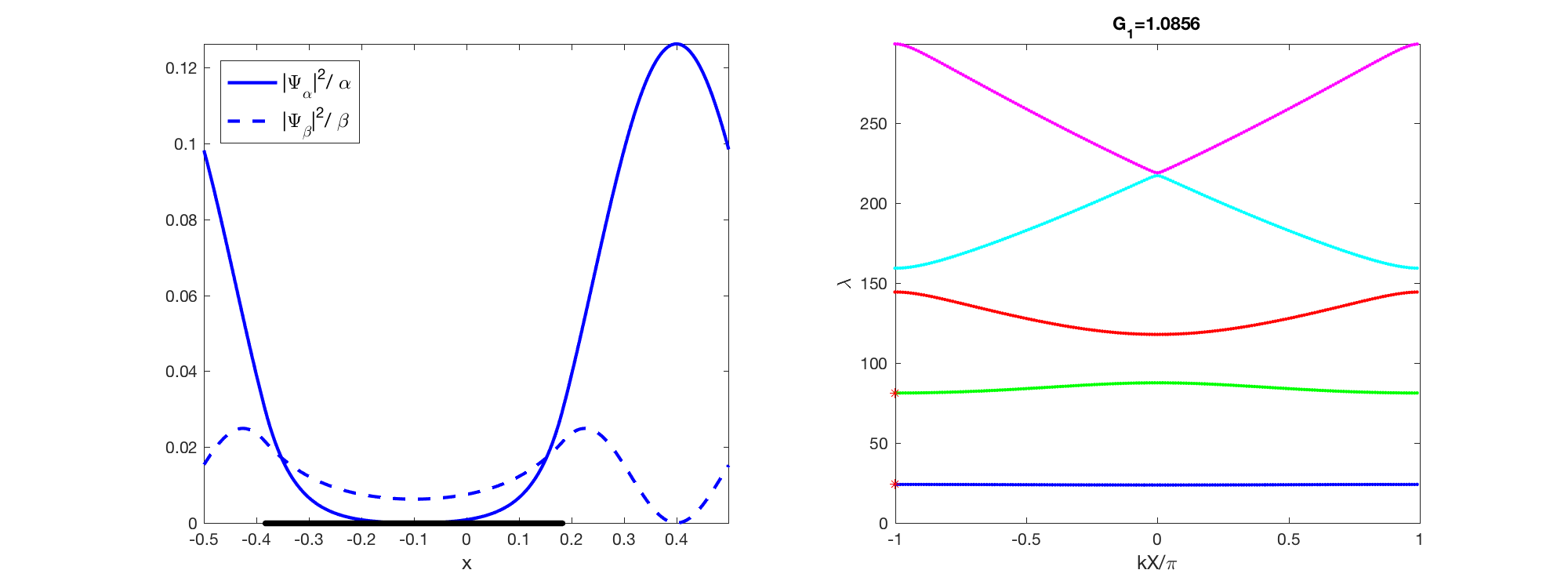} \\ \ \\
Iteration 10 \\
\includegraphics[width=.88\textwidth]{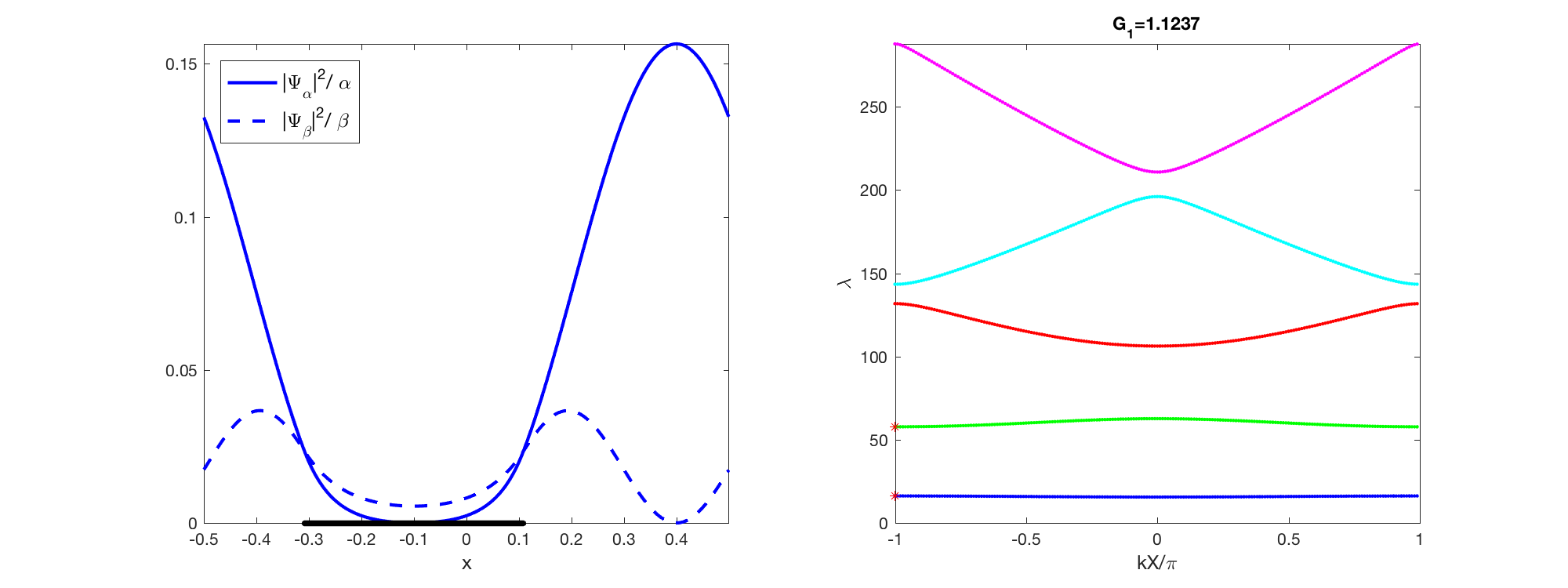}
\end{center}
\caption{An illustration of iterations 0, 1, and 10 of the rearrangement method in one dimension for $m=1$, $X=1$, and $V_+ = 100$.  
{\bf (Left)} The eigenfunctions corresponding to the spectral gap edges are plotted together with the set $\{ x \colon V(x) = V_+ \}$  indicated on the $x$-axis by a thick black line. 
{\bf (Right)} The dispersion relation for the Schr\"odinger operator. 
}
\label{f:OptCond}
\end{figure}

\subsection{Optimal potentials of \eqref{e:opt} for  $m\geq2$.} 
By arguing as in the proof of Theorem \ref{thm:OneInterval}, one may prove the following corollary. 
\begin{cor} \label{thm:mTransPoints}
Fix $m\in \mathbb N^{+}$. Every locally optimal potential of \eqref{e:opt}  with $\beta > \alpha$ is a step function with exactly $2m$ transition points. In other words, there are $m$ intervals where $V= V_+$ and $m$ intervals where $V = 0$. 
\end{cor}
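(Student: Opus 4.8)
The plan is to promote the Wronskian monotonicity argument from the proof of Theorem~\ref{thm:OneInterval}, which exploited the single interval between the zeros of $\psi_\alpha$ and $\psi_\beta$ available when $m=1$, to the $2m$ intervals that appear for general $m$. By Theorems~\ref{thm:BangBang} and~\ref{thm:Vstep} the local maximizer $\tilde V$ is bang-bang with finitely many transition points, and by \eqref{eq:Vstar} these transition points are exactly the points where $\psi_\alpha^2/\alpha = \psi_\beta^2/\beta$, equivalently where $|\psi_\alpha/\psi_\beta| = \sqrt{\alpha/\beta}$. The first ingredient I would record is the oscillation count: since $\beta>\alpha$, the pairs $(\alpha,\psi_\alpha)$ and $(\beta,\psi_\beta)$ are the (anti-)periodic eigenpairs at the two edges of the $m$-th gap, and oscillation theory \cite[Theorem 3.1.2]{Eastham1973} gives that each of $\psi_\alpha$, $\psi_\beta$ has exactly $m$ simple zeros in one period $[0,X)$.

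Next I would establish that these zeros interlace on the circle $\mathbb R/X\mathbb Z$. Writing the equations as $\psi_\alpha'' + (\alpha - V)\psi_\alpha = 0$ and $\psi_\beta'' + (\beta - V)\psi_\beta = 0$ with $\beta>\alpha$, the Sturm separation theorem forces at least one zero of $\psi_\beta$ strictly between consecutive zeros of $\psi_\alpha$; since each function has exactly $m$ zeros, there is exactly one, so the $2m$ zeros alternate cyclically between zeros of $\psi_\alpha$ and of $\psi_\beta$. Consequently they partition the circle into $2m$ arcs, each bounded by one zero of $\psi_\alpha$ and one of $\psi_\beta$, on whose interior both eigenfunctions keep a constant sign.

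The crux is to show that each arc carries exactly one transition point. On a fixed arc I would study the Wronskian $W = \psi_\beta \psi_\alpha' - \psi_\alpha \psi_\beta'$, for which $W' = (\beta-\alpha)\psi_\alpha\psi_\beta$. At the endpoint where $\psi_\alpha$ vanishes the zero is simple and $\psi_\beta\ne 0$, so $W$ is nonzero there and its sign coincides with that of $\psi_\alpha\psi_\beta$ just inside the arc; since $W'$ has this same sign throughout, $W$ never returns to zero and keeps a constant sign equal to that of $\psi_\alpha\psi_\beta$. Hence $\frac{d}{dx}\log|\psi_\alpha/\psi_\beta| = W/(\psi_\alpha\psi_\beta) > 0$, so $|\psi_\alpha/\psi_\beta|$ is strictly increasing, sweeping from $0$ at the $\psi_\alpha$-zero endpoint to $+\infty$ at the $\psi_\beta$-zero endpoint. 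The equation $|\psi_\alpha/\psi_\beta| = \sqrt{\alpha/\beta}$ therefore has exactly one solution on the arc, at which $f := \psi_\beta^2/\beta - \psi_\alpha^2/\alpha$ changes sign strictly---a genuine transition of $\tilde V$. Summing over the $2m$ arcs produces exactly $2m$ transition points; since $\tilde V$ alternates between $V_+$ and $0$ across them and $2m$ is even, this gives $m$ intervals where $\tilde V = V_+$ and $m$ where $\tilde V = 0$.

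I expect the principal obstacle to be the oscillation-theoretic bookkeeping---verifying that both gap-edge eigenfunctions carry exactly $m$ zeros for the relevant parity and that these interlace strictly around the circle---together with confirming that the Wronskian sign argument applies uniformly to every arc, in particular that $W$ cannot vanish in an arc's interior. Once these are in place, the per-arc analysis is a direct reprise of the $m=1$ computation in Theorem~\ref{thm:OneInterval}.
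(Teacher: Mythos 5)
Your proposal is correct and is essentially the paper's own argument: the paper proves this corollary by stating only that one should ``argue as in the proof of Theorem \ref{thm:OneInterval}'', and your write-up supplies precisely those details (each gap-edge eigenfunction has exactly $m$ zeros per period, Sturm comparison interlaces them into $2m$ arcs, and the per-arc Wronskian monotonicity forces exactly one transition point per arc). One small orientation slip worth fixing: on an arc whose $\psi_\alpha$-zero is the \emph{right} endpoint, $W$ carries the sign \emph{opposite} to $\psi_\alpha\psi_\beta$ (compute $W$ at that endpoint: $\mathrm{sign}(\psi_\alpha'(b))$ is the negative of the interior sign of $\psi_\alpha$), so $|\psi_\alpha/\psi_\beta|$ is strictly decreasing in $x$ there rather than increasing; the boundary values $0$ and $+\infty$ are still attained at the $\psi_\alpha$- and $\psi_\beta$-ends respectively, so the unique-crossing conclusion on every arc is unaffected.
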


The next result gives an upper bound on $G_m[V]$ for any $ V \in  \mathcal A(X,V_+)$. 
\begin{prop} \label{prop:minfty}
Let $V \in  \mathcal A(X,V_+)$. Then 
\[
\revision{G_m[V] \leq}  \frac{ 2X^2V_+}{ 2 \pi^2 m^2 + X^2V_+}. 
\]
\end{prop}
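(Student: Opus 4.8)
The plan is to bound $\alpha_m$ from below and $\beta_m$ from above by comparing the twisted operator $H_V(k)$ against the two constant-potential operators $H_0(k)$ and $H_{V_+}(k)$, and then to exploit the monotonicity of $G_m$ as a function of the two band edges. Since $G_m$ increases with $\beta_m$ and decreases with $\alpha_m$, two-sided control of the band edges immediately yields an upper bound for the ratio.

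First I would record the eigenvalue comparison. Writing $E_j(k;V)$ for the $j$-th eigenvalue of $H_V(k)$, the min-max principle together with the pointwise bounds $0 \le V \le V_+$ give, for every $k \in \mathcal B$ and every $j$,
\[
E_j(k;0) \le E_j(k;V) \le E_j(k;V_+) = E_j(k;0) + V_+ ,
\]
the last equality holding because $H_{V_+}(k) = H_0(k) + V_+$ merely shifts the spectrum. Taking the maximum over $k$ in the lower bound with $j=m$, and the minimum over $k$ in the upper bound with $j=m+1$, gives $\alpha_m = \max_k E_m(k;V) \ge \max_k E_m(k;0)$ and $\beta_m = \min_k E_{m+1}(k;V) \le \min_k E_{m+1}(k;0) + V_+$.

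Next I would compute the free band edges explicitly. For $V \equiv 0$ on $[0,X]$ the eigenvalues of $H_0(k)$ are $\left(k + \tfrac{2\pi n}{X}\right)^2$, $n \in \mathbb Z$, and folding these parabolas into the Brillouin zone $[-\pi/X,\pi/X]$ shows that the $m$-th band occupies $\left[\left(\tfrac{(m-1)\pi}{X}\right)^2, \left(\tfrac{m\pi}{X}\right)^2\right]$; in particular all gaps are closed, with common edge $\max_k E_m(k;0) = \min_k E_{m+1}(k;0) = \left(\tfrac{m\pi}{X}\right)^2$. Combining with the previous step and writing $\lambda := \pi^2 m^2 / X^2$ yields the two-sided control $\alpha_m \ge \lambda$ and $\beta_m \le \lambda + V_+$.

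Finally I would conclude using monotonicity. Viewing $G_m = 2(\beta-\alpha)/(\beta+\alpha)$ as a function on $\{\beta \ge \alpha > 0\}$, one checks $\partial_\alpha G < 0$ and $\partial_\beta G > 0$, so on the feasible rectangle $\{\alpha \ge \lambda,\ \beta \le \lambda + V_+\}$ its value is largest at the corner $(\alpha,\beta) = (\lambda, \lambda + V_+)$, which is admissible since $\lambda + V_+ \ge \lambda$. (If instead $\beta_m \le \alpha_m$, the gap is empty, $G_m \le 0$, and the claimed positive bound is trivial.) Therefore
\[
G_m[V] \le \frac{2\big((\lambda+V_+)-\lambda\big)}{(\lambda+V_+)+\lambda} = \frac{2V_+}{2\lambda + V_+} = \frac{2X^2 V_+}{2\pi^2 m^2 + X^2 V_+} .
\]
I do not anticipate a serious obstacle, as the comparison and the monotonicity are routine. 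The one point needing care is the band-edge bookkeeping: verifying that folding the free dispersion relation produces closed gaps with the common edge $(m\pi/X)^2$, and ensuring the comparison is carried out band-by-band with matching index so that all inequalities point in the intended direction.
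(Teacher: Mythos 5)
Your proof is correct, and its core is the same as the paper's: the semidefinite ordering $H_0(k) \preceq H_V(k) \preceq H_0(k) + V_+$, the explicit free spectrum, and the monotonicity of $f(\alpha,\beta) = 2\frac{\beta-\alpha}{\beta+\alpha}$ in each variable. The one genuine difference is in how the band edges are handled. The paper first invokes the one-dimensional (Hill's equation) fact that for \emph{any} admissible $V$ the $m$-th gap edges are attained at the symmetry point $k=0$ ($m$ even) or $k=\pi/X$ ($m$ odd), and then applies the comparison $\mu_j(k^*) \le E_j(k^*) \le \mu_j(k^*)+V_+$ at that single quasi-momentum, using the degeneracy $\mu_m(k^*)=\mu_{m+1}(k^*)=(m\pi/X)^2$ there. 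You instead bound $\alpha_m = \max_k E_m(k;V) \ge \max_k E_m(k;0)$ and $\beta_m = \min_k E_{m+1}(k;V) \le \min_k E_{m+1}(k;0) + V_+$ directly, needing only that the folded free bands touch at $(m\pi/X)^2$; you never need to know where the gap of $H_V$ itself is located. This makes your argument slightly more robust and dimension-agnostic --- it is in fact the same structure the paper uses later in two dimensions (Proposition \ref{prop:minftyD2}, with Neumann and Dirichlet eigenvalues playing the roles of your lower and upper free bounds) --- at the modest cost of having to verify the band-folding computation, whereas the paper reads the band edges off the standard periodic/anti-periodic eigenvalue lists. Your handling of the empty-gap case ($\beta_m \le \alpha_m$, where the bound is trivial) is a detail the paper leaves implicit.
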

\begin{proof}
For $V \in  \mathcal A(X,V_+)$,  we have the semidefinite ordering
\[
- (\partial_x + i k)^2 \preceq - (\partial_x + i k)^2 + V \preceq - (\partial_x + i k)^2 + V_+. 
\]
which  implies that 
\[
\mu_j(k) \leq E_j(k) \leq \mu_j(k) + V_+, 
\]
where $\mu_j(k)$ are the eigenvalues of $H_0(k)$ with periodic boundary conditions. 

The $m$-th gap occurs at $k=0$ for $m$ even and $k=\pi$ for $m$ odd. 
Recall that $\mu_{1}(0) = 0$ and  $\mu_{2j}(0) = \mu_{2j+1}(0) =  \left( \frac{2 j \pi}{X}\right)^2$. 
For an even $m=2j$ and $k=0$, we now write
$$
 G_m 
 = 2 \frac{E_{m+1}(0) - E_m(0)}{E_{m+1}(0) + E_m(0) } 
 \leq 2 \frac{\mu_{2j+1}(0)+V_{+} - \mu_{2j}(0)}{\mu_{2j+1}(0)+V_{+}+ \mu_{2j}(0) } 
 =  \frac{ 2X^2V_+}{ 2 \pi^2 m^2 + X^2V_+}. 
 $$
 Here we have used the fact that $f(\alpha,\beta) = 2\frac{\alpha - \beta}{\alpha +\beta}$ is increasing in $\alpha$ and decreasing in $\beta$ for $\alpha, \beta>0$.  
Recall that $\mu_{2j-1}(\pi) = \mu_{2j}(\pi) =  \left( \frac{ (2j-1) \pi}{X}\right)^2$. 
For an odd $m=2j-1$ and $k=\pi$, we have that 
$$
 G_m 
 = 2 \frac{E_{m+1}(\pi) - E_m(\pi)}{E_{m+1}(\pi) + E_m(\pi) } 
 \leq 2 \frac{\mu_{2j}(\pi)+V_{+} - \mu_{2j-1}(\pi)}{\mu_{2j}(\pi)+V_{+}+ \mu_{2j-1}(\pi) } 
 =  \frac{ 2X^2V_+}{ 2 \pi^2 m^2 + X^2V_+}. 
 $$
Putting the even and odd bounds together gives the desired result. 
\end{proof}

\subsubsection{High-contrast asymptotic results for $m\geq 1$.}
\begin{prop} \label{p:1DhighContrast} In the high-contrast limit $(V_+ = \infty$), the periodic arrangement where all $m$ intervals are the same length attains the maximum of $G_m$ with value $G_m^\star = \frac{6}{5}$. 
\end{prop}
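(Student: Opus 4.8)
The plan is to reduce everything, in the limit $V_+ = \infty$, to a collection of decoupled Dirichlet problems, and then to combine a direct computation (for the equal-interval arrangement) with an eigenvalue-counting bound (for the matching upper bound). First I would record the structure of the spectrum in the high-contrast limit. As $V_+ \to \infty$, the eigenfunctions are forced to vanish on $\{V = V_+\}$ (the content of Proposition~\ref{prop:ConvVinfty}, already used in Lemma~\ref{l:V+toinfty}), so the barriers completely decouple the wells $\{V = 0\}$. Since an infinite barrier transmits no amplitude, there is no tunneling and every band is flat in $k$; moreover the Bloch phase in $H_V(k)$ is gauged away on each well by $p \mapsto e^{-ikx}p$, so each well of length $a_j$ contributes precisely the Dirichlet-Laplacian eigenvalues $(\pi n/a_j)^2$. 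Consequently the spectrum of $H_V$ is the sorted union, with multiplicity, $E_1 \le E_2 \le \cdots$ of $\{(\pi n/a_j)^2 : n\ge 1,\ 1\le j\le p\}$, where $a_1,\dots,a_p$ are the well lengths in $[0,X]$, and flatness gives $\alpha_m = E_m$, $\beta_m = E_{m+1}$.

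For the equal-interval arrangement I would compute $G_m$ directly. With $m$ identical wells of common length $a$, each eigenvalue $(\pi n/a)^2$ occurs with multiplicity $m$, so the sorted spectrum is $(\pi/a)^2$ (multiplicity $m$), then $(2\pi/a)^2$ (multiplicity $m$), and so on. Hence $\alpha_m = E_m = (\pi/a)^2$ and $\beta_m = E_{m+1} = (2\pi/a)^2 = 4(\pi/a)^2$, giving
\[
G_m = 2\,\frac{\beta_m - \alpha_m}{\beta_m + \alpha_m} = 2\,\frac{4-1}{4+1} = \frac{6}{5}.
\]
Equivalently, the equal-interval configuration has the smaller period $X/m$, and band folding identifies the $m$-th gap of the period-$X$ picture with the first gap of the period-$(X/m)$ picture, whose value is $\tfrac{6}{5}$ by Lemma~\ref{l:V+toinfty}.

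For the upper bound, valid for \emph{any} high-contrast configuration, I would use the spectral counting function $N(\lambda) := \#\{\text{eigenvalues}\le\lambda\} = \sum_{j=1}^p \lfloor a_j\sqrt{\lambda}/\pi\rfloor$. Since $E_m$ is the $m$-th eigenvalue, $N(E_m)\ge m$. Applying the elementary inequality $\lfloor 2x\rfloor \ge 2\lfloor x\rfloor$ termwise,
\[
N(4E_m) = \sum_{j=1}^p \left\lfloor \frac{2 a_j \sqrt{E_m}}{\pi} \right\rfloor \ge \sum_{j=1}^p 2\left\lfloor \frac{a_j \sqrt{E_m}}{\pi} \right\rfloor = 2\,N(E_m) \ge 2m \ge m+1.
\]
Thus at least $m+1$ eigenvalues lie at or below $4E_m$, so $\beta_m = E_{m+1} \le 4E_m = 4\alpha_m$. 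Since $t\mapsto 2(t-1)/(t+1)$ is increasing, evaluating at $t = \beta_m/\alpha_m \in [1,4]$ yields $G_m \le \tfrac{6}{5}$, with equality forcing $\beta_m = 4\alpha_m$, exactly as in the equal-interval computation. Together these two parts show the equal-interval periodic arrangement attains the maximum value $\tfrac{6}{5}$.

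The hard part will be the rigorous justification of the high-contrast limit: that the band functions genuinely flatten and that the spectrum converges to the union of Dirichlet spectra of the wells. I would handle this by invoking Proposition~\ref{prop:ConvVinfty} for the convergence of eigenpairs to Dirichlet-Laplace data on $\{V=0\}$, combined with the vanishing of the $k$-dependence of each band as the barrier height diverges. A secondary point worth noting is that $G_m$ depends only on the ratios $a_j/a_k$ and is invariant under an overall rescaling of the wells, so the fixed-period constraint $\sum_j a_j \le X$ does not restrict the optimal value; this is precisely why the counting bound can be carried out without reference to $X$.
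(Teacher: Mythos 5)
Your proposal is correct, and its key step---the upper bound---takes a genuinely different route from the paper's. Both arguments use the same high-contrast reduction (the wells decouple, the bands flatten, and the spectrum becomes the sorted union of Dirichlet spectra, which you justify via Proposition \ref{prop:ConvVinfty}), and both compute the equal-interval value $f(4)=\frac{6}{5}$ directly. But for the matching upper bound the paper restricts to exactly $m$ wells (leaning on Corollary \ref{thm:mTransPoints}) and performs a case analysis on where the $m$-th gap can sit: either between the first mode of the shortest well and the second mode of the longest well, in which case $G_m = f\left(4L_1^2/L_m^2\right) \le f(4)$, or inside one of the intervals between consecutive modes of the longest well, which is ruled out by comparing $f\left((n+1)^2/n^2\right)$ with $f(4)$ and invoking the existence of a configuration achieving $f(4)$. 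Your counting-function argument replaces all of this with the single inequality
\[
N(4E_m) \;=\; \sum_{j} \left\lfloor \frac{2 a_j \sqrt{E_m}}{\pi} \right\rfloor \;\ge\; 2\,N(E_m) \;\ge\; 2m \;\ge\; m+1,
\]
which gives $\beta_m = E_{m+1} \le 4E_m = 4\alpha_m$ and hence $G_m \le f(4)$ for an \emph{arbitrary} finite collection of wells, not just $m$ of them; so you need neither the $m$-interval structure result nor any case analysis, and the equality case $\beta_m = 4\alpha_m$ falls out automatically. What the paper's route buys is an explicit picture of where the gap sits in each configuration; what yours buys is brevity and generality in the number of wells. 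One cosmetic remark: the paper's proof writes the well eigenvalues as $\left(2\pi j/L\right)^2$, carrying a spurious factor of $2$ relative to your (correct) Dirichlet formula $\left(\pi n/a\right)^2$, but since only eigenvalue ratios enter, both computations yield the same value.
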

\begin{proof}
In the high contrast limit, the eigenvalues converge to the Dirichlet-Laplacian eigenvalues for $m$ intervals. We denote  the length of the $m$ intervals where $V = 0$ by $L_1$, $L_2$, \ldots $L_m$ and without loss of generality we can assume that $L_1 \leq L_2 \leq \cdots \leq L_m$. The eigenvalues are then given by  
$$
\left\{ \left(\frac{2 \pi j}{ L_m} \right)^2, \ \left(\frac{2 \pi j}{ L_{m-1}} \right)^2, \ \ldots \ ,  \ \left(\frac{2 \pi j}{ L_1}\right)^2  \right\}, \qquad j \in \mathbb N_{+}. 
$$

If the $m$-th gap in the spectrum is between the eigenvalues $\left(\frac{2 \pi }{ L_1} \right)^2$ and $\left(2 \frac{2 \pi }{ L_m} \right)^2$, then the gap-to-midgap ratio is 
\begin{equation} \label{e:val1}
2 \frac{\left(2 \frac{2 \pi }{ L_m} \right)^2 - \left(\frac{2 \pi }{ L_1} \right)^2}{\left(2 \frac{2 \pi }{ L_m} \right)^2 + \left(\frac{2 \pi }{ L_1} \right)^2 } 
= 2 \frac{4 L^2_1 / L^2_m -1 }{ 4 L^2_1 / L^2_m + 1} = f\left( \frac{4 L_1^2}{L_m^2} \right).
\end{equation} 
where $f(\alpha) = 2\frac{\alpha - 1}{ \alpha + 1}$. Since $f(\alpha)$ is increasing and $L_1 \leq L_m$, \eqref{e:val1} is maximized when $L_1 = L_m$, which implies all intervals are of the same length and $G_m = f(4) = \frac{6}{5}$. 

\medskip

If not, the $m$-th gap must lie in one of the intervals
\[ 
\left( \left( \frac{2 \pi n}{L_m} \right)^2 , \left( \frac{2 \pi (n+1)}{L_m} \right)^2  \right), 
\qquad \qquad \qquad n = 1,\ldots,m. 
\]
Then we obtain the bound
\[
G_m \leq 2 \frac{ \frac{4 \pi^2 (n+1)^2}{L_m^2} - \frac{ 4 \pi^2 n^2}{L_m^2} }{ \frac{4 \pi^2 (n+1)^2}{L_m^2} + \frac{4 \pi^2 n^2}{L_m^2} } = f\left( \frac{(n+1)^2}{n^2} \right). 
\]
Since $g(n) := \frac{(n+1)^2}{n^2}$ is decreasing on $(0,\infty)$ and $f$ is increasing, we have that the composition $f\circ g(n)$ is a decreasing function. It follows that $f\circ g(n) < f\circ g(1) = f(4)$  for all $n =2,\ldots,m$. Since we can construct a configuration where $G_m = f(4)$, we conclude that the optimal $m$-th gap must lie in the interval 
$\left( \left( \frac{2 \pi}{L_m} \right)^2 , \left( 2 \frac{2 \pi}{L_m} \right)^2  \right)$. But clearly the $m$-th gap must lie above $m$ eigenvalues, so it must be that the gap lies in the interval $\left( \left( \frac{2 \pi}{L_1} \right)^2 , \left( 2 \frac{2 \pi}{L_m} \right)^2  \right)$, as considered above. 
\end{proof}

\subsection{Rearrangement Algorithm} For $m \geq 2$, we use the rearrangement algorithm (Algorithm \ref{alg:Rearrange}) to find the optimal potentials in  \eqref{e:opt}. 
We initialize the algorithm with the potential 
$$
V(x) = \begin{cases}
V_+ & \textrm{if } \cos(2\pi m x/X) >0 \\ 
0 & \textrm{otherwise}
\end{cases}. 
$$
For this initialization, the algorithm converges in just a few iterations; \revision{similar results were observed for other initializations.} As in Remark \ref{rem:MonDec}, we observe that the value of $G_m$ is decreasing on non-stationary iterations.  In Figure \ref{f:HigherModes}, we plot the optimal potentials and eigenfunctions corresponding to spectral band edges (left) and the dispersion relation (right). 
We make the following observations: 
\begin{enumerate}
\item As is well-known from the theory of Hill's equation, the eigenfunctions associated with the edges of the $m$-th gap have exactly $m$ zeros. Note that from Figure \ref{f:m1EigFunPlot}, depending on the value of $V_+$,  the eigenfunction may exhibit positive local  minima.  
\item The result in Corollary \ref{thm:mTransPoints} is observed; the potential maximizing $G_m$ has $m$ intervals where $V= V_+$. Additionally, the optimal potential is $X/m$-periodic. Although we can prove this result in the high contrast limit (see Proposition \ref{p:1DhighContrast}),  we are unable to prove this observation at this time for finite contrast, $V_+$.  
\item   In Figure \ref{f:HigherModes}, we observe that many of the spectral gaps are trivial. For example, in Figure \ref{f:HigherModes} for $m=3$, the gaps numbered 1,2,4,5,\ldots are trivial. Assuming that the optimal potential is $X/m$-periodic, this follows from the following easily proven Lemma. 
\begin{lem} Let $V$ be a periodic potential with period $X/m$. Then the $n$-th spectral gap of the operator $H_V$ acting on $H^1[0,X]$ can be non-trivial only if $m \mid n$. 
\end{lem}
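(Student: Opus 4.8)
The plan is to exploit the fact that a potential with period $X/m$ gives rise to a Schr\"odinger operator whose spectrum on $[0,X]$ is an $m$-fold unfolding of the spectrum of the operator on the shorter period $[0,X/m]$. The key observation is that if $\tilde H$ denotes the operator $-\partial_x^2 + V$ acting with quasi-periodic (Bloch) boundary conditions on the fundamental cell $[0,X/m]$, then because $V$ is genuinely $X/m$-periodic, the Bloch variety of $H_V$ over $[0,X]$ is obtained from that of $\tilde H$ by a folding of the Brillouin zone. Concretely, the reciprocal lattice for period $X$ is finer than that for period $X/m$ by a factor of $m$, so the band functions $E_j(k)$ over the small $[0,X]$-Brillouin zone are exactly the band functions of $\tilde H$ sampled at the $m$ quasi-momenta $k + \tfrac{2\pi \ell}{X/m}\cdot\tfrac{1}{m}$, $\ell = 0,1,\dots,m-1$, that fold into a single $k$.

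First I would set up the two Brillouin zones precisely. Let $\mathcal B_{\mathrm{big}}$ be the Brillouin zone associated to period $X/m$ and $\mathcal B_{\mathrm{small}}$ the one associated to period $X$; then $\mathcal B_{\mathrm{small}} \subset \mathcal B_{\mathrm{big}}$ and $\mathcal B_{\mathrm{big}}$ is tiled by $m$ translates of $\mathcal B_{\mathrm{small}}$ under shifts by vectors in the dual lattice of the period-$X/m$ lattice. Second, I would record the folding relation: the spectrum of $H_V$ (as a union over $k \in \mathcal B_{\mathrm{small}}$) coincides as a set with the spectrum $\bigcup_{\kappa \in \mathcal B_{\mathrm{big}}} \sigma(H_V(\kappa))$. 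Since the latter is the spectrum computed on the genuine period $X/m$, the bands of $H_V$ on $[0,X]$ are simply a relabeling of the bands $\tilde E_r(\kappa)$, $r \in \mathbb N^+$, $\kappa \in \mathcal B_{\mathrm{big}}$. Third, and this is the heart of the argument, the only genuine gaps of $\sigma(H_V)$ are the gaps between consecutive bands $\tilde E_r$ and $\tilde E_{r+1}$ of the $X/m$-periodic problem; each such gap, when the bands are reindexed by the period-$X$ labeling, sits between band index $mr$ and band index $mr+1$. Hence a gap can open after band $n$ only when $n = mr$ for some integer $r$, i.e. only when $m \mid n$.

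The main obstacle will be making the band-folding bijection rigorous, in particular verifying that the reindexing sends the $r$-th $X/m$-gap to exactly the position between the $(mr)$-th and $(mr+1)$-th $X$-bands, rather than handling it combinatorially by counting. The clean way is to count eigenvalues below a fixed energy: for energy $\lambda$ in a gap of the $X/m$-problem lying above exactly $r$ bands, the number of $X$-bands lying entirely below $\lambda$ is $mr$, because each of the $m$ folded copies contributes exactly $r$ bands below $\lambda$. This follows from the unitary equivalence $H_V\big|_{L^2[0,X]} \cong \bigoplus_{\ell=0}^{m-1} H_V(\kappa_\ell)\big|_{L^2[0,X/m]}$ for the appropriate folded quasi-momenta, which is where one must be careful that the direct-sum decomposition respects the quasi-periodic boundary conditions. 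Once this integer count is in place, any energy strictly between band $n$ and band $n+1$ of the $X$-problem with $m \nmid n$ must lie in the interior of some $X/m$-band (since the $X/m$-spectrum has no gap there), forcing the $n$-th $X$-gap to be empty; this is precisely the contrapositive of the stated Lemma.
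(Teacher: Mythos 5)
The paper itself gives no proof of this lemma --- it is introduced as an ``easily proven Lemma'' and stated without argument --- so there is nothing to compare against line by line; your proposal has to be judged on its own merits, and it is correct. The Brillouin-zone folding argument you outline is surely the intended ``easy'' proof: since $V$ is genuinely $X/m$-periodic, the period-$X$ twisted operator $H_V(k)$ commutes with translation by $X/m$, which yields the direct-sum decomposition of $H_V(k)$ into the $m$ small-period operators at the quasi-momenta $\kappa_\ell \equiv k \pmod{2\pi/X}$, and the eigenvalue count below a fixed energy then does the rest. One way to streamline your final step and avoid any worry about how gaps ``reindex'': if the $n$-th period-$X$ gap is nonempty, pick $\lambda$ in it; then for \emph{every} $k$ exactly $n$ eigenvalues of $H_V(k)$ lie below $\lambda$ (since $E_n(k) < \lambda < E_{n+1}(k)$ pointwise), while $\lambda \notin \sigma(H_V)$ forces $\lambda$ to sit in a gap of the $X/m$-spectrum above exactly $r$ full bands, so by the folding decomposition each of the $m$ summands contributes exactly $r$ eigenvalues below $\lambda$, giving $n = mr$ immediately. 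This replaces your slightly awkward closing sentence (an energy in a period-$X$ gap cannot lie ``in the interior of some $X/m$-band'' --- it lies outside the spectrum altogether; the point is rather that it must lie in an $X/m$-gap, whose counting index is necessarily a multiple of $m$). With that adjustment the argument is complete and rigorous.
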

\item Where one of the eigenfunctions (either $\psi_\alpha$ or $\psi_\beta$) takes a zero, the other eigenfunction has zero derivative; see Remark \ref{rem:OptSolNeuDir}. 
\item The value of $G_m^\star$ is decreasing in $m$; see Table \ref{t:OptValsOneDim} and Proposition \ref{prop:minfty}.
\end{enumerate}

\begin{table}[t!]
\begin{center}
\begin{tabular}{c|c}
$m$ & $G_m^\star$  \\
\hline
1 & 1.12370  \\
2 & 0.74391 \\
3 & 0.46766 \\
4 & 0.30895 \\
5 & 0.21550 
\end{tabular}
\end{center}
\caption{For $X = 1$ and $V_+ = 100$, the values of $G_m^\star$ for $m=1,\ldots,5$. }
\label{t:OptValsOneDim}
\end{table}%

\begin{figure}[t!]
\begin{center}
\includegraphics[width=.75\textwidth]{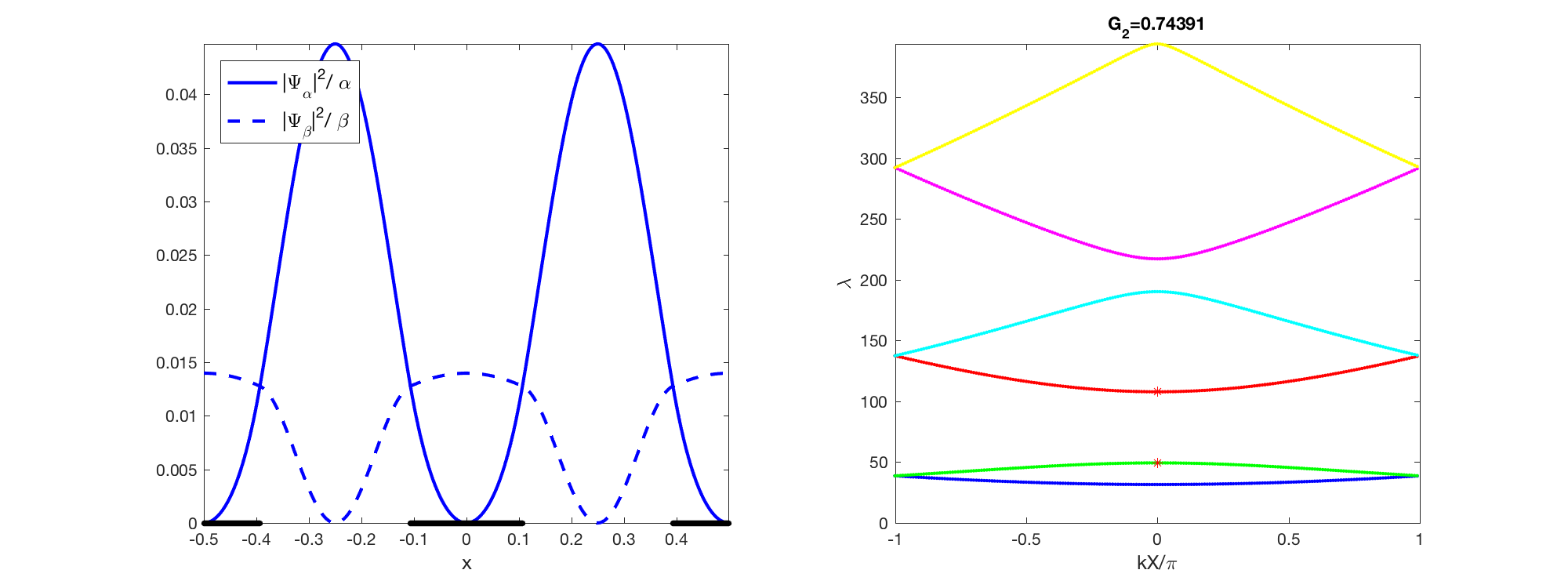}
\includegraphics[width=.75\textwidth]{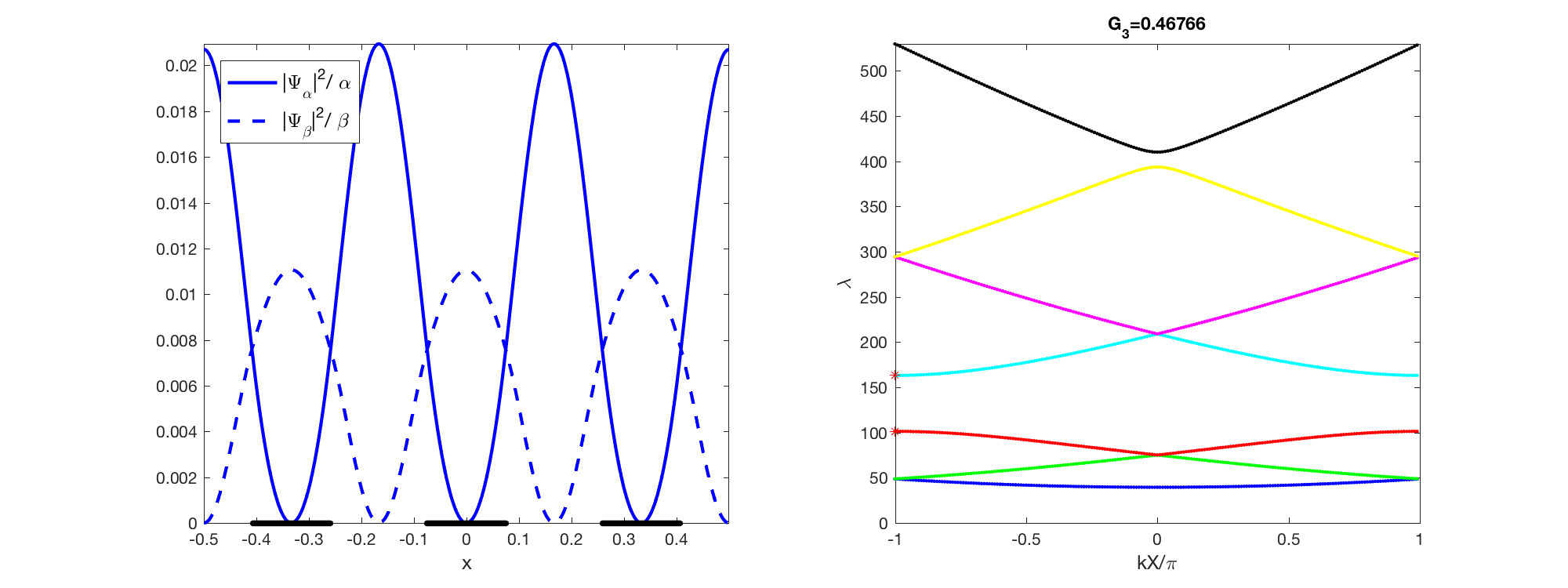}
\includegraphics[width=.75\textwidth]{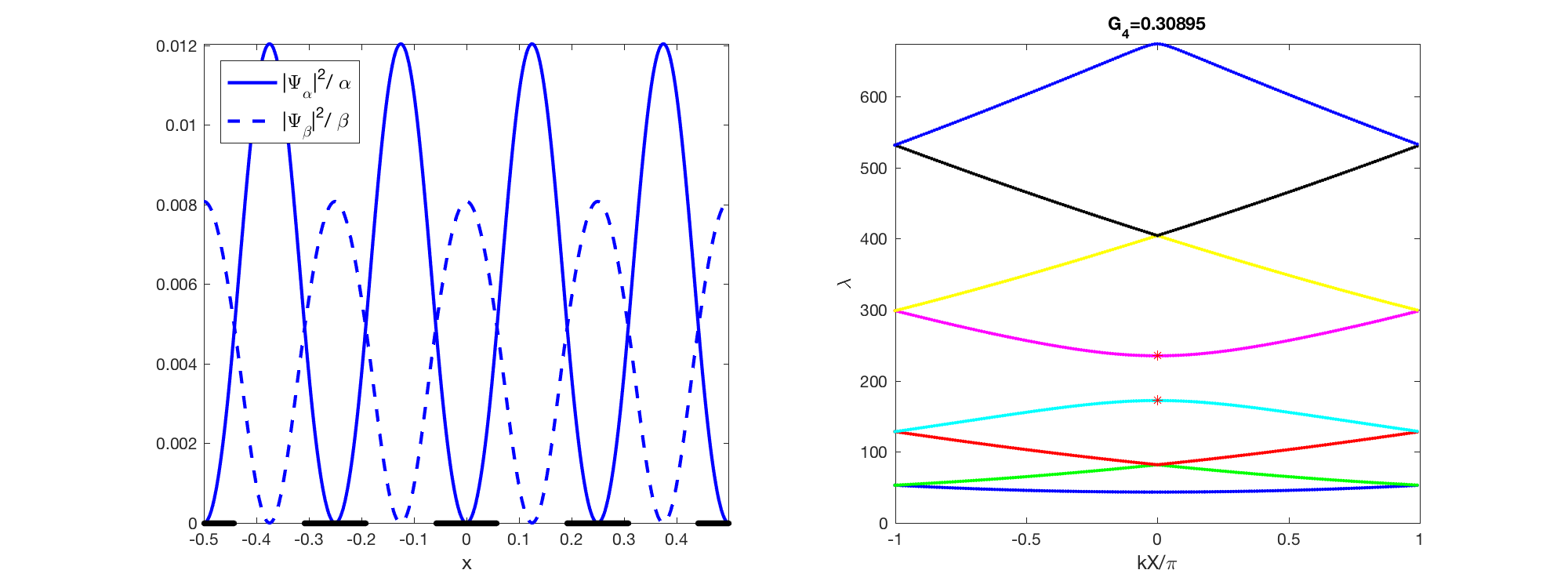}
\includegraphics[width=.75\textwidth]{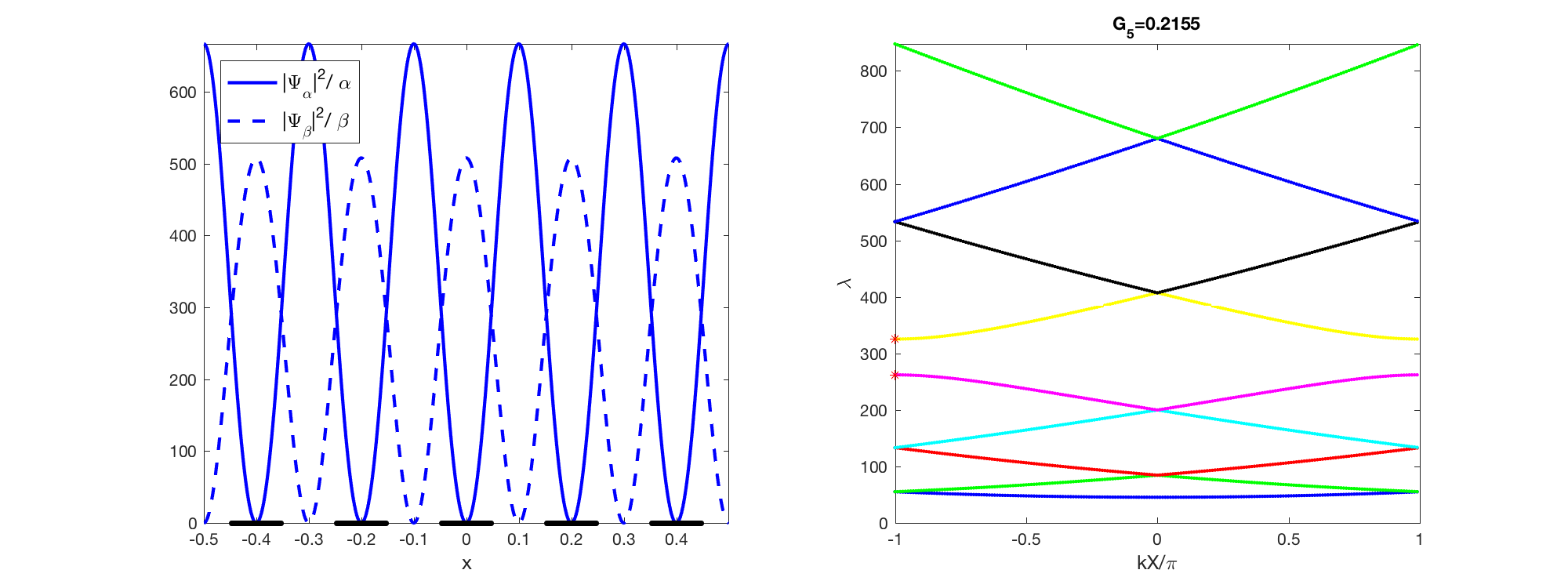}
\caption{Fix $X=1$ and $V_+ = 100$.  Each row corresponds to the maximizer of $G_m$ for $m=2,\ldots,5$.  
{\bf (Left)} The eigenfunctions corresponding to the spectral gap edges are plotted together with the set $\{ x \colon  V(x) = V_+  \}$  indicated on the $x$-axis by a thick black line. 
{\bf (Right)} The dispersion relation for the Schr\"odinger operator.  For $m=1$, see Figure \ref{f:OptCond}. 
}
\label{f:HigherModes}
\end{center}
\end{figure}

\section{The two-dimensional eigenvalue  problem} \label{sec:2D}
We consider computing solutions of the eigenvalue problem \eqref{eq:SpectralProblem2} for fixed $\Gamma, V,$ and $k$. 

\subsection{Transformation of  \eqref{eq:SpectralProblem2} to a square domain.} 
Let $\Gamma$ be a unit-volume lattice. 
It is shown in Appendix \ref{sec:LattParam} that one can find parameters $(a,b) \in U$ such that \revision{$\Gamma$} is isometric to the lattice  with basis 
$$ 
B_{a,b} =  \begin{pmatrix} \frac{1}{\sqrt b} & \frac{a}{\sqrt b} \\ 0 & \sqrt b \end{pmatrix}.
$$ 
We denote this lattice by $\Gamma_{a,b}$ and the associated  $(a,b)$-torus by $T_{a,b} = \mathbb R^2/ \Gamma_{a,b}$.  
We consider the linear transformation $T_{0,1} \to T_{a,b}$ given by 
$$
y = B_{a,b} x 
\qquad \implies \qquad 
\nabla_y = B^{-t} \nabla_x. 
$$
where $B^{-t}$ is the inverse transpose matrix of $B$.
Transforming variables in \eqref{eq:SpectralProblem2}, we obtain 
\begin{subequations}
\label{e:TransfSpectralProb}
\begin{align}
& - (B_{a,b}^{-t}\nabla_x + i k) \cdot (B_{a,b}^{-t}\nabla_x + i k) p + \tilde V p = E p \\
& p \textrm{ periodic on } T_{0,1} . 
\end{align}
\end{subequations}
Here $\tilde V (x) = V(y)$ is the transformed potential.  
Thus, for an arbitrary lattice, we have transformed \eqref{eq:SpectralProblem2}  to a problem on the square. 
We refer to 
\begin{align*}
H_V(k;a,b): & =  - (B_{a,b}^{-t}\nabla_x + i k) \cdot (B_{a,b}^{-t}\nabla_x + i k) + \tilde V \\ 
&= - (\nabla_x + i B_{a,b}^{t} k) \cdot (B_{a,b}^{t} B_{a,b} )^{-1} (\nabla_x + i B_{a,b}^{t} k) + \tilde V
\end{align*}
as the \emph{transformed twisted Schr\"odinger operator}.

\subsection{Discretization} \label{sec:NumDisc}
We consider a square grid discretization of $T_{0,1}$. 
We use a simple nine point finite difference approximation to find spectrum of the transformed twisted Schr\"odinger operator in \eqref{e:TransfSpectralProb}. Denoting $N = (B^{t}B)^{-1}$ and $k_p = B^{-1} k$, the stencil for this discretization with lattice spacing $h$ is given by 
\begin{equation*}
\begin{bmatrix}
\frac{N(2,1) + N(1,2)}{ 4 h^2} & - \frac{N(2,2)}{h^2} - \frac{i}{h} k_p(2) & - \frac{N(2,1) + N(1,2)}{ 4 h^2} \\ 
- \frac{N(1,1)}{h^2} +\frac{i}{h} k_p(1) &  2 \frac{N(1,1) + N(2,2)}{h^2} + k^tk  & - \frac{N(1,1)}{h^2} - \frac{i}{h} k_p(1) \\  
- \frac{N(2,1) + N(1,2)}{ 4 h^2} & - \frac{N(2,2)}{h^2} + \frac{i}{h} k_p(2) &  \frac{N(2,1) + N(1,2)}{ 4 h^2} 
\end{bmatrix}. 
\end{equation*}
Let $L_{a,b}$ denote the matrix for this discretization of the transformed twisted Schr\"odinger operator incorporating the periodic boundary conditions. Abusing notation, we use $V$ to denote the discretization of the transformed potential, $\tilde V$. We obtain the parameterized  family of eigenvalue problems 
\begin{equation}
\label{e:DiscreteProb}
\left[ L_{a,b}(k) + \textrm{diag}( V)  \right] \ u = E \ u,  \qquad \qquad k \in \mathcal B.
\end{equation}
Note that $ L_{a,b}(k) + \textrm{diag}( V)$ is a Hermitian matrix.  The dispersion surfaces are approximated by $E_j(k)$. 

\subsection{Symmetry and Discretization of the Brillouin Zone} \label{sec:IBZ} 
Symmetries within  the Brillouin zone can be used to further reduce the number of eigenvalue problems  need to be solved for the optimization problem \eqref{e:opt}. In this section, we review these well-known symmetries. 

For any real potential, the dispersion relation is symmetric with respect to $k \mapsto -k$. This follows from taking the complex conjugate of both sides of \eqref{eq:SpectralProblem2a}, 
\[ H_V(-k)  \ \overline{p} = \overline{ H_V(k) \ p } = \overline{ E \ p} = E \  \overline{p}.  \]
In one dimension, this symmetry can be observed in Figure \ref{f:HigherModes}. 

If the potential has the full symmetry of a lattice (both translational and rotational) then additional symmetry is inherited by the Brillouin zone. In two dimensions, square and triangular lattices have  rotational symmetries. Let $R_\theta = \begin{pmatrix} \cos (\theta)  & - \sin(\theta) \\ \sin(\theta) & \cos(\theta) \end{pmatrix}$ denote the matrix which rotates points about the origin in the counterclockwise direction by angle $\theta$. We denote the operator $\mathcal{R}_\theta \colon L^2(\mathbb R^2) \to L^2(\mathbb R^2)$ defined by 
\[ \mathcal{R}_\theta f(x) = f(R_\theta^t x). \]
If $V$ is $\mathcal{R}_\theta$ invariant, then we have that 
\begin{align*} \mathcal{R}_\theta  \ H_V(k)  \ f 
&=  -\mathcal{R}_\theta  \left( \Delta + 2 i k^t \nabla - k^t k \right)  f  + \mathcal{R}_\theta  V f  \\
&=  -  \left(\Delta + 2 i (R_\theta k) ^t \nabla - (R_\theta k)^t (R_\theta k ) \right) \mathcal{R}_\theta  f  +  V \mathcal{R}_\theta f  \\ 
&=  H_V(R_\theta k)  \ \mathcal{R}_\theta \ f. 
 \end{align*}
 Here, we have used the facts that the Laplacian commutes with $\mathcal{R}_\theta$, the identity $\mathcal{R}_\theta \nabla = R_\theta^t \nabla \mathcal{R}_\theta$, and that $R_\theta$ is unitary. 
If $(E,p)$ is an eigenapair satisfying \eqref{eq:SpectralProblem2}, we compute 
\[ H_V(R_\theta k) \ (\mathcal{R}_\theta p) = \mathcal{R}_\theta \  (H_V(k) \ p) = \mathcal{R}_\theta \  (E \ p) = E \ (\mathcal{R}_\theta p). \] 
This implies that $(E,\mathcal{R}_\theta p)$ is also an eigenpair of \eqref{eq:SpectralProblem2} with quasi-momentum $R_\theta k$. 

\begin{figure} [t!]
\begin{center}
\includegraphics[height=.30\textwidth]{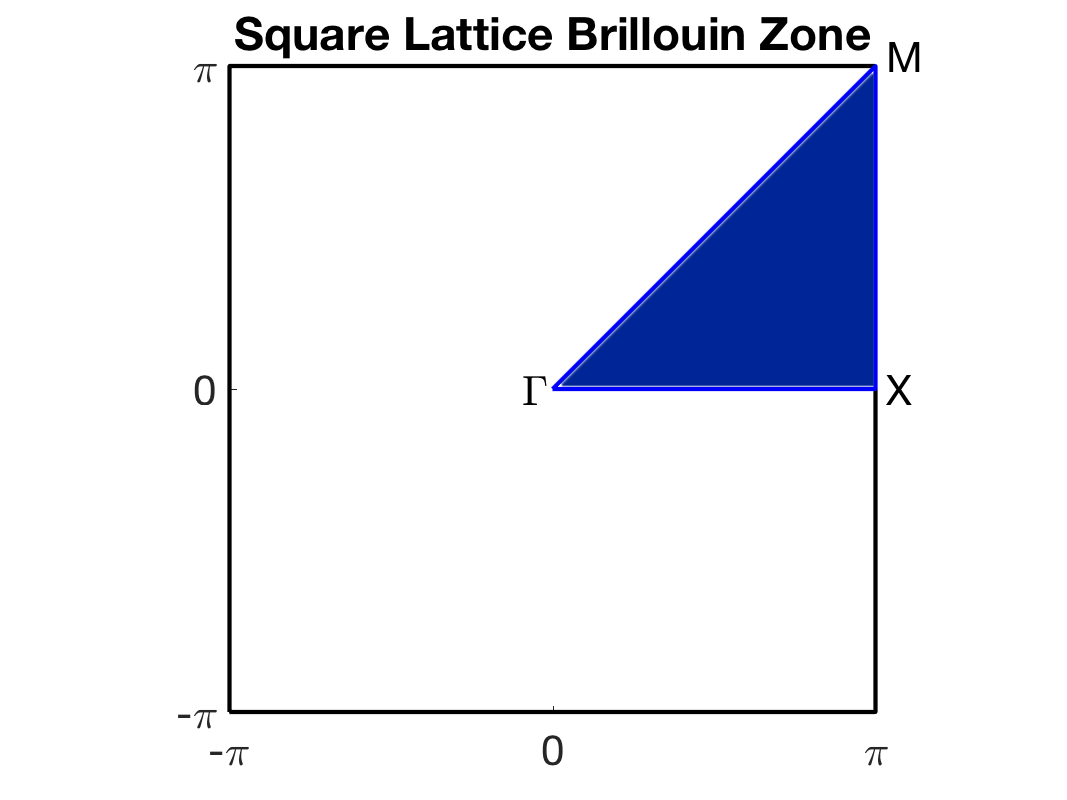}
\includegraphics[height=.30\textwidth]{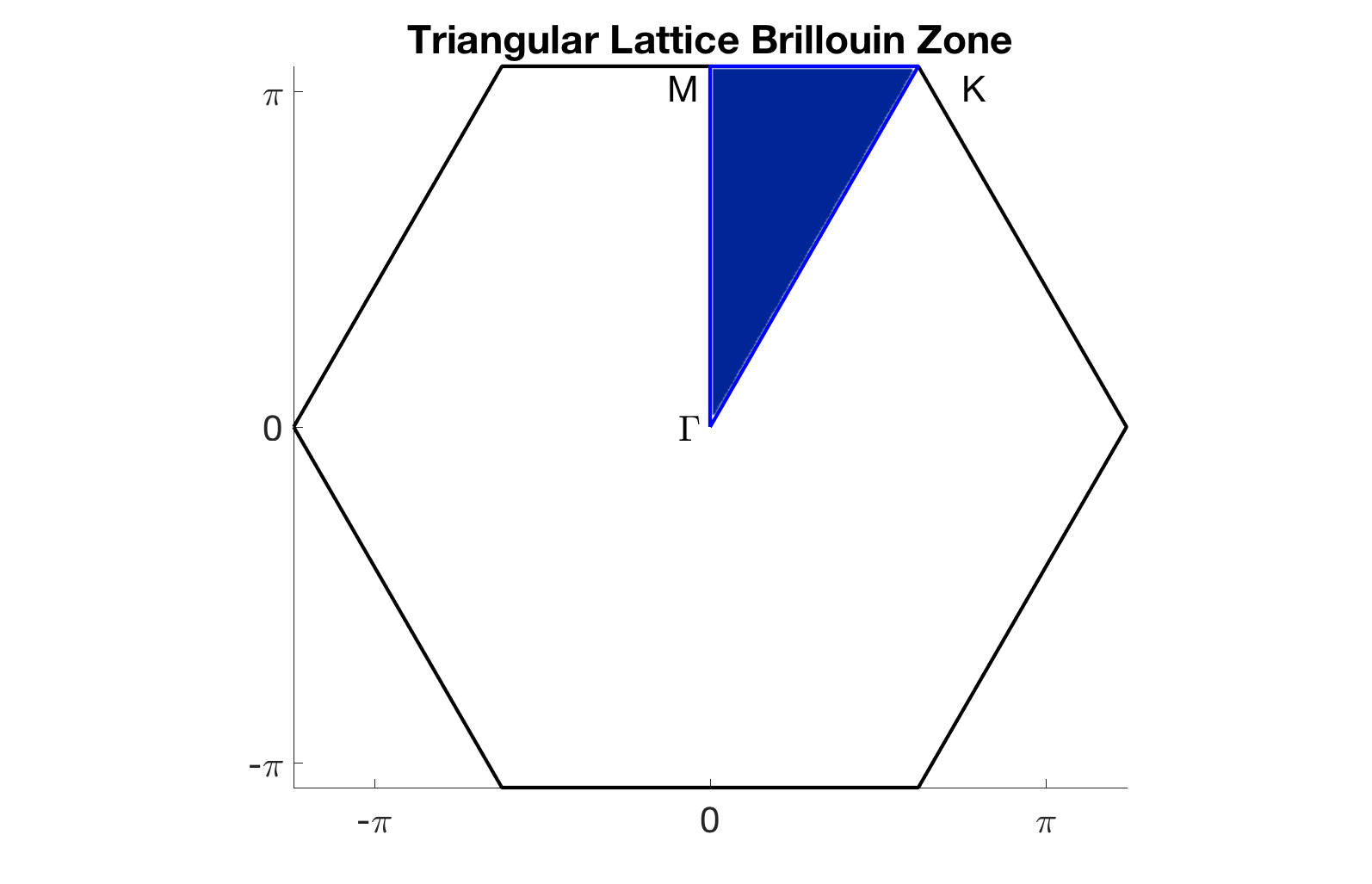}
\caption{The irreducible Brillouin zones (IBZ) for the square (left) and triangular (right) lattices, along with the traditional names of the vertices of the IBZ.}
\label{f:BriZone}
\end{center}
\end{figure}

The rotational and inversion symmetries for the square and triangular lattice together imply that the Brillouin zones have eight-fold and twelve-fold symmetry. The region modulo this symmetry is referred to as the \emph{irreducible Brillouin zone} (IBZ). The IBZ for the square and triangular lattices are \revision{colored in blue} in Figure \ref{f:BriZone}. We label the vertices and origin of the IBZ in the standard way. In computations involving the square and triangular lattices, we use this symmetry.  

As described in \cite{kuchment2016}, it  was formally conjectured and widely believed that the extrema of spectral bands were attained at the boundary of the IBZ. Although this has been shown to be false in general, in practice for generic potentials, extrema are often located at such symmetry points. We do not understand which classes of potentials satisfy this condition. 

Nonetheless, this observation motivates the following heuristic for studying extremal gaps, and in particular solving \eqref{e:opt}, which was introduced in \cite{Dobson1999,cox2000band}. First we find a potential which attains the maximal gap-to-midgap ratio for quasi-momentum only on the boundary of the IBZ. It may be the case that the spectral bands corresponding to this potential has extrema  at the boundary or interior of the IBZ. If it happens that the extrema are attained at the boundary, a condition that can easily be checked \emph{ex post facto}, then this potential is optimal for \eqref{e:opt}. This is the approach taken here, and it is observed that the extrema for the potentials attaining the maximum in \eqref{e:opt} have extremal spectra on the interior of the IBZ.   

For computations for the square and triangular lattices we discretize the boundary of the IBZ \revision{by uniformly distributing  $15$ points on each side of the triangles $\Gamma$-$X$-$M$ and $\Gamma$-$K$-$M$ in Figure \ref{f:BriZone}, respectively.} In Section \ref{s:Opt2dLattices}, when we consider other lattices besides the square and triangular lattices, we'll discretize half of the Brillouin zone, as justified by the fact that the potential is real (see above). 
   
\section{Optimization problem in two dimensions} \label{s:Opt2d}
We consider the optimization problem  \eqref{e:opt} in two dimensions.   Theorem \ref{thm:exist}, guarantees  the existence of an optimal potential in two dimensions for fixed $\Gamma$. The computational challenge for the two-dimensional problem is that the eigenvalue may no longer be simple and so the Fr\'echet derivative in Lemma \ref{prop:sens} may no longer be valid. In particular, if  the rearrangement algorithm (Algorithm \ref{alg:Rearrange}),  introduced in Section \ref{sec:1D},  is applied directly, one finds that the potential will alternate between non-optimal potentials. We modify the rearrangement algorithm, by reformulating the eigenvalue problem as a semi-definite inequality. Discretizing this reformulation gives a semi-definite program (SDP).

\subsection{SDP formulation}
The optimization problem \eqref{e:opt} of maximizing the $m$-th gap can be fully written out as 
\begin{subequations}
\label{e:opt2}
\begin{align}
\label{e:opt2a}
\max \quad &   \ \frac{\beta - \alpha}{(\alpha + \beta)/2 } \\
\label{e:opt2b}
\textrm{s.t.} \quad & E_m(V,k) \leq \alpha \textrm{ and }  E_{m+1}(V,k) \geq \beta &&  k \in \mathcal{B} \\ 
& H_V(k) \ p_j(x) = E_j \   p_j(x)  &&  x \in \mathbb R^2 / \Gamma, \ k \in \mathcal{B}, \ j=m, m+1 \\ 
& p_j \textrm{ is $\Gamma$-periodic} && j=m, m+1 \\
& 0  \leq V(x)  \leq  V_+  && x\in  \mathbb R^2 / \Gamma
\end{align}
\end{subequations}
Note that here we have introduced two additional parameters $\alpha, \beta > 0$. At the optimum,  the constraints in \eqref{e:opt2b} will be active for some value of $k \in \mathcal B$ so that $\alpha = \max_{k \in \mathcal B} \ E_m(V,k)$ and $\beta = \min_{k \in \mathcal B} \ E_{m+1}(V,k)$.  The equivalence of  \eqref{e:opt}  and  \eqref{e:opt2} then follows from the fact that the objective in \eqref{e:opt2a} is monotonically increasing in $\beta$ and decreasing in $\alpha$. 

Let $H^{1}_{\textrm{per}}$ denote the space of  periodic  $H^1$ functions on the torus, $\mathbb R^2 / \Gamma$. 
For each $k \in \mathcal B$, let $\Pi_m^k \colon H^{1}_{\textrm{per}} \to H^{1}_{\textrm{per}}$ be a  rank $m$ projection. Then we can further rewrite the constraints for the optimization problem \eqref{e:opt2}, as
\begin{subequations} \label{e:opt3}
\begin{align}
\max \quad& \frac{\beta - \alpha}{(\alpha + \beta)/2 }  \\
\textrm{s.t.} \quad & \Pi_m^k \left( H_V(k) - \alpha \textrm{Id}  \right) \Pi_m^k \preceq 0 && k \in \mathcal B  \\ 
& (I- \Pi_m^k) \left( H_V(k) - \beta  \textrm{Id} \right) (I- \Pi_m^k ) \succeq 0  && k \in \mathcal B \\ 
& \Pi^k_m \textrm{ is a rank $m$ projection} && k \in \mathcal B\\
&0  \leq V(x)  \leq  V_+  &&   x\in  \mathbb R^2 / \Gamma. 
\end{align} 
\end{subequations}
The advantage of this formulation is that it no longer references the (non-differentiable) eigenvalues.  
 At the optimum, for the values of $k\in \mathcal B$ which attain $ \max_{k \in \mathcal B}  E_m(V,k)$, the rank $m$ projection  will have the from 
$$
\Pi^k_m u  (x) = \sum_{j\in [m]}  \ p_j(x,k)  \  \int_{\mathbb R^2/\Gamma} \   p_j(y,k) \ u(y) \ dy. 
$$
That is, $\Pi^k_m$ will be the projection onto a space spanned by the first $m$ eigenfunctions of $H_V(k)$. 

Finally, the objective function in  \eqref{e:opt3} is a linear fractional function. A homogenization transformation of the variables can be used to equivalently rewrite  \eqref{e:opt3} as an optimization problem with linear objective \cite{men2010bandgap}. Namely making the substitutions $\alpha = \tilde \alpha / \theta$, $\beta = \tilde \beta / \theta$, and $V = \tilde V/\theta$, we obtain the equivalent problem 
\begin{subequations} \label{e:opt3}
\begin{align}
\max \quad& \tilde \beta - \tilde \alpha   \\
\textrm{s.t.} \quad & \Pi_m^k \left( \theta H(k) + \tilde V - \tilde \alpha \textrm{Id}  \right) \Pi_m^k \preceq 0 && k \in \mathcal B  \\ 
& (I- \Pi_m^k) \left( \theta H(k) + \tilde V - \tilde\beta  \textrm{Id} \right) (I- \Pi_m^k ) \succeq 0  && k \in \mathcal B \\ 
& \Pi^k_m \textrm{ is a rank $m$ projection} && k \in \mathcal B\\
&0  \leq \tilde V(x)  \leq  \theta V_+  &&   x\in  \mathbb R^2 / \Gamma \\
& \tilde\alpha + \tilde\beta = 2
\end{align} 
\end{subequations}

We used the discretizations described in Section \ref{sec:NumDisc} to obtain a family of finite-dimensional eigenvalues problems. Further discretizing the Brillouin zone $\{k_j\}_{j\in[q]} \subset \mathcal B$, gives a finite-dimensional approximation of the optimization problem \eqref{e:opt3}. 

As in  Algorithm  \ref{alg:Rearrange}, we will employ an algorithm which alternates between two steps. In the first step, the potential $V$ is fixed and a subspace corresponding to the projections $\Pi_m^k$ is computed. In the second step, the projections are fixed and a linear SDP (a convex optimization problem) is solved \revision{by using CVX, a package for specifying and solving convex programs \cite{cvx,gb08}} to update the potential. The details are given in  Algorithm \ref{alg:SDPrearrange}. Note that for simplicity, in \eqref{e:SDP}, we have written SDP with a linear fractional objective, but the same homogenization transformation made just above \eqref{e:opt3}, can be used to rewrite \eqref{e:SDP} as a linear SDP. 

\begin{algorithm}[t!]
\caption{\label{alg:SDPrearrange} 
The rearrangement algorithm for the two-dimensional problem in \eqref{e:opt}. }
\vspace{.2cm}

\begin{algorithmic}[t]
\STATE{\bfseries Input:} Fix $V_+ > 0$, $m \in \mathbb N^{+}$. Initialize $V$ in $\mathcal A(\Gamma, V_+)$ defined in \eqref{eq:Ad}. 

\vspace{.2cm}

\WHILE {the potential is not stationary}
\STATE 1. For an $n$-point spatial discretization, $\{x_\ell\}_{\ell=1}^n \subset T_{0,1}$, and discretization of the Brillouin zone, $\{k_j\}_{j \in [q]} \subset \mathcal B$,  compute eigenfunctions $u_i(k_j) \in \mathbb C^n$ solving \eqref{e:DiscreteProb} for $i=1,\ldots, m+\mu$. Form the matrices 
\begin{align*}
& U_{\alpha,j} =  [u_1(k_j) | \cdots | u_m(k_j)]  \in \mathbb C^{n \times m}, \quad j\in[q] \\ 
& U_{\beta,j}  = [u_{m+1}(k_j) | \cdots | u_{m+\mu}(k_j)]  \in \mathbb C^{n\times \mu}, \quad j\in[q]. 
\end{align*}

\bigskip

\STATE 2. Solve the linear fractional SDP
\begin{subequations}
\label{e:SDP}
\begin{align}
\label{eq:SDPa}
\max_{a,b,V} \quad & \frac{ \beta - \alpha}{ (\alpha + \beta)/2} \\
\label{eq:SDPb}
\textrm{s.t.} \quad & U_{\alpha,j}^* \left[ L_{a,b}(k_j) + \textrm{diag}(V) - \alpha  \textrm{Id} \right] U_{\alpha,j} \preceq 0 && j\in[q]  \\ 
\label{eq:SDPc}
& U_{\beta,j}^* \left[ L_{a,b}(k_j) + \textrm{diag}(V) - \beta  \textrm{Id}  \right] U_{\beta,j}  \succeq 0  && j \in [q]  \\ 
\label{eq:SDPd}
& 0 \leq V_\ell  \leq  V_+ &&   \ell \in [n]. 
\end{align}
\end{subequations}
\ENDWHILE
\end{algorithmic}
\end{algorithm}

\subsection{Karush Kuhn Tucker (KKT) conditions} \label{s:KKT}
We derive the KKT equations for the semi-definite optimization problem in   \eqref{e:SDP}. Since the constraints are linear, the KKT equations are necessarily satisfied at every maximum point $(\alpha^*, \beta^*, V^*)$. To reduce notation, we understand $a,b$ to be fixed and denote $L_j = L_{a,b}(k_j)$. Introducing the dual variables $A_j \in \mathbb S^m$  and  $B_j \in \mathbb S^\mu$ for $j\in [q]$ and  $f_+, f_- \in \mathbb R^n$, we have the Lagrangian  
\begin{align*}
\mathcal L(\alpha, \beta, V; A_j, B_j, f_+,f_-) & = \frac{\beta - \alpha}{ (\alpha + \beta)/2}  +  \langle f_+\ , \ V_+ - V \rangle + \langle f_-\ , \ V-0 \rangle \\ 
& \quad - \sum_{j\in q} \Big\langle A_j \ ,  \ U_{\alpha,j}^* \left[ L_j  + \textrm{diag}(V) - \alpha  \textrm{I} \right]  U_{\alpha,j}  \Big\rangle_F \\
& \quad + \sum_{j\in q}  \Big\langle  B_j \ , \  U_{\beta,j}^* \left[ L_j + \textrm{diag}(V) - \beta  \textrm{I}  \right] U_{\beta,j} \Big\rangle_F. 
\end{align*}
 
The stationarity conditions are obtained from the equations $\frac{\partial \mathcal L}{ \partial \alpha} = 0$, $\frac{\partial \mathcal L}{ \partial \beta} = 0$, and $\nabla_V \mathcal L = 0$. The first two conditions are given by  
\begin{subequations}
\label{eq:Stat12}
\begin{align}
\sum_{j \in [q]} \textrm{tr} (A_j )  &= \frac{4 \beta}{(\alpha+\beta)^2} \\
\sum_{j \in [q]} \textrm{tr} (B_j ) &= \frac{4 \alpha}{(\alpha+\beta)^2}. 
\end{align}
\end{subequations}
The third stationary condition is given by 
\begin{align}
\label{eq:Stat3}
\sum_{j \in [q]} U_{\beta,j} B_j U_{\beta,j}^* - U_{\alpha,j} A_j U_{\alpha,j}^*   = \textrm{diag}(f_+ - f_-). 
\end{align}

The primary feasibility conditions \eqref{eq:SDPb}, \eqref{eq:SDPc}, and \eqref{eq:SDPd} must hold. The dual feasibility conditions are given by 
\begin{subequations}
\label{eq:DualFeas}
\begin{align}
& A_j \succeq 0, \quad  j \in [q] \\
& B_j \succeq 0, \quad  j \in [q] \\
& f_+, \ f_- \geq 0. 
\end{align}
\end{subequations}
Finally, the complementary slackness conditions state that 
\begin{subequations}
\label{eq:CompSlack}
\begin{align}
\label{eq:CompSlacka}
& \langle f_+ \ , \ V_+ - V \rangle = 0 \\
\label{eq:CompSlackb}
& \langle f_- \ , \ V- 0  \rangle = 0  \\ 
&  \Big\langle A_j \ ,  \ U_{\alpha,j}^* \left[ L_j  + \textrm{diag}(V) - \alpha  \textrm{I} \right]  U_{\alpha,j}  \Big\rangle_F  = 0 \\ 
& \Big\langle  B_j \ , \  U_{\beta,j}^* \left[ L_j + \textrm{diag}(V) - \beta  \textrm{I}  \right] U_{\beta,j} \Big\rangle_F = 0 . 
\end{align}
\end{subequations}

\subsection{Properties of Algorithm \ref{alg:SDPrearrange}} In this section we use the KKT equations for the linear-fractional SDP \eqref{e:SDP}, derived in Section \ref{s:KKT},   to prove properties about Algorithm \ref{alg:SDPrearrange}. 

We say a potential $V$, defined on a grid $\{x_\ell\}_{\ell=1}^n$, and constrained so that $ V_\ell \in [0 ,   V_+]$ for all $\ell \in [n]$ is \emph{bang-bang} if $V_\ell \in \{0,V_+\}$ for every $\ell \in [n]$. We say that the potential is \emph{weakly bang-bang} if  there exists at least one grid point $\ell \in [n]$ at which either $V_\ell = 0$ or $V_\ell = V_+$. 

\begin{prop} \label{prop:Wbb}
At every iteration of Algorithm \ref{alg:SDPrearrange} such that $\beta \neq \alpha$, the potential is weakly bang bang. 
\end{prop}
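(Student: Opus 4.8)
The plan is to argue by contradiction directly from the KKT conditions \eqref{eq:Stat12}--\eqref{eq:CompSlack}, which are necessarily satisfied at the optimizer $(\alpha,\beta,V)$ of the linear-fractional SDP \eqref{e:SDP} solved in Step 2 of each iteration. Because the constraints \eqref{eq:SDPb}--\eqref{eq:SDPd} are all linear (affine) in $(\alpha,\beta,V)$ for the fixed matrices $U_{\alpha,j},U_{\beta,j}$, no constraint qualification is needed for KKT to be necessary. Suppose, for contradiction, that the potential fails to be weakly bang-bang, i.e. that $0 < V_\ell < V_+$ strictly for every grid point $\ell \in [n]$. I would first exploit complementary slackness: since $V_+ - V_\ell > 0$ and $V_\ell > 0$ for all $\ell$, while $f_+, f_- \geq 0$ by dual feasibility \eqref{eq:DualFeas}, the equalities \eqref{eq:CompSlacka} and \eqref{eq:CompSlackb} force $f_+ = f_- = 0$ entrywise. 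Hence $\textrm{diag}(f_+ - f_-) = 0$, and the third stationarity condition \eqref{eq:Stat3} collapses to
\begin{equation*}
\sum_{j\in[q]} U_{\beta,j} B_j U_{\beta,j}^* - U_{\alpha,j} A_j U_{\alpha,j}^* = 0.
\end{equation*}

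The second step is to take the trace of this $n \times n$ matrix identity. The structural fact I would invoke is that the columns of $U_{\alpha,j}$ and $U_{\beta,j}$ are orthonormal, being normalized eigenvectors of the Hermitian matrix $L_{a,b}(k_j) + \textrm{diag}(V)$ from \eqref{e:DiscreteProb}; thus $U_{\alpha,j}^* U_{\alpha,j} = I_m$ and $U_{\beta,j}^* U_{\beta,j} = I_\mu$. By cyclicity of the trace, $\textrm{tr}(U_{\beta,j} B_j U_{\beta,j}^*) = \textrm{tr}(B_j U_{\beta,j}^* U_{\beta,j}) = \textrm{tr}(B_j)$, and likewise $\textrm{tr}(U_{\alpha,j} A_j U_{\alpha,j}^*) = \textrm{tr}(A_j)$. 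Taking traces therefore yields $\sum_j \textrm{tr}(B_j) = \sum_j \textrm{tr}(A_j)$. Combining this with the first two stationarity conditions \eqref{eq:Stat12}, namely $\sum_j \textrm{tr}(A_j) = 4\beta/(\alpha+\beta)^2$ and $\sum_j \textrm{tr}(B_j) = 4\alpha/(\alpha+\beta)^2$, I obtain $4\alpha/(\alpha+\beta)^2 = 4\beta/(\alpha+\beta)^2$, which gives $\alpha = \beta$. This contradicts the standing hypothesis $\beta \neq \alpha$, so the assumption of a strictly interior potential is untenable and at least one $V_\ell$ must equal $0$ or $V_+$.

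The main obstacle is the orthonormality step: the entire argument pivots on $U_{\alpha,j}^* U_{\alpha,j} = I$ and $U_{\beta,j}^* U_{\beta,j} = I$, so I would be careful to justify it from Step 1 of Algorithm \ref{alg:SDPrearrange}, emphasizing that the eigenvectors of a Hermitian matrix can always be chosen orthonormal even across degenerate eigenspaces. The only other point requiring a remark is that the denominator $(\alpha+\beta)^2$ is nonzero, so that the right-hand sides of \eqref{eq:Stat12} are well-defined; this is immediate since $\alpha,\beta > 0$ on the admissible set. Everything else is routine finite-dimensional linear algebra, and no analytic estimates or compactness arguments are needed, in contrast to the one-dimensional bang-bang results of Theorem \ref{thm:BangBang}.
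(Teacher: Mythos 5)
Your proof is correct and follows essentially the same route as the paper's: contradiction via complementary slackness forcing $f_+ = f_- = 0$, then stationarity \eqref{eq:Stat3}, the cyclic trace identity with $U_{\cdot,j}^* U_{\cdot,j} = \textrm{Id}$, and the conditions \eqref{eq:Stat12} to conclude $\alpha = \beta$. The additional care you take in justifying the orthonormality of the eigenvector blocks and the nonvanishing denominator is a fine refinement but does not change the argument.
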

\begin{proof}
In the second step of Algorithm \ref{alg:SDPrearrange}, we obtain a new potential by  solving the  linear-fractional SDP in \eqref{e:SDP}. Such a potential necessarily satisfies the KKT equations, given in  \eqref{eq:Stat12}, \eqref{eq:Stat3},  \eqref{eq:SDPb}-\eqref{eq:SDPd}, \eqref{eq:DualFeas}, and \eqref{eq:CompSlack}. 

Proceeding by contradiction, assume that $V_\ell \in (0,V_+)$ for every $\ell \in [n]$.
By \eqref{eq:CompSlacka} and \eqref{eq:CompSlackb} we have that 
\[f_+ = f_- = 0. \]
From \eqref{eq:Stat3}, we obtain 
\[ \sum_j U_{\beta,j} B_j U_{\beta,j}^* = \sum_j  U_{\alpha,j} A_j U_{\alpha,j}^*.  \]
 Taking the trace of both sides and using the circular trace identity and $U_{\cdot,j}^*  U_{\cdot,j} = \textrm{I}$, we have that 
\[ \sum_j \textrm{tr} (B_j) = \sum_j  \textrm{tr}(A_j).  \]
 But, by the identities in \eqref{eq:Stat12}, this would imply that $\alpha  = \beta$. 
\end{proof}

The following proposition shows  that Algorithm \ref{alg:SDPrearrange} generalizes Algorithm \ref{alg:Rearrange}  to higher dimensions. 
\begin{prop} \label{prop:Alg2GenAlg1}
The sequence of potentials defined in Algorithm \ref{alg:Rearrange} solve the KKT conditions for the linear fractional SDPs  \eqref{e:SDP}  in Algorithm  \ref{alg:SDPrearrange}. 
\end{prop}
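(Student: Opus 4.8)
The plan is to exhibit explicit dual variables and verify the KKT system \eqref{eq:Stat12}, \eqref{eq:Stat3}, \eqref{eq:SDPb}--\eqref{eq:SDPd}, \eqref{eq:DualFeas}, \eqref{eq:CompSlack} directly, exploiting the simple structure of the one-dimensional gap edges. First I would reduce the sum over quasimomenta: in one dimension the $m$-th gap is attained at a single $k_\star$ ($k=0$ for $m$ even, $k=\pi/X$ for $m$ odd), and there the gap-edge eigenvalues $\alpha = E_m(k_\star)$ and $\beta = E_{m+1}(k_\star)$ are simple with real eigenfunctions $\psi_\alpha = u_m(k_\star)$ and $\psi_\beta = u_{m+1}(k_\star)$. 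Accordingly I set $A_j = B_j = 0$ for every inactive $k_j \neq k_\star$ and work with a single pair $A \in \mathbb S^m$, $B \in \mathbb S^\mu$.

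Next I would pin down $A$ and $B$. Since the columns of $U_\alpha$ and $U_\beta$ are eigenvectors of $L_{a,b}(k_\star) + \mathrm{diag}(V)$, the matrices appearing in \eqref{eq:SDPb}--\eqref{eq:SDPc} are diagonal: $U_\alpha^*[L + \mathrm{diag}(V) - \alpha\,\mathrm{I}]U_\alpha = \mathrm{diag}(E_1 - \alpha,\dots,E_m - \alpha)$, whose entries are nonpositive with only the last equal to zero, and similarly $U_\beta^*[L + \mathrm{diag}(V) - \beta\,\mathrm{I}]U_\beta$ is diagonal and nonnegative, with only its first entry zero. Combined with $A,B\succeq 0$, the complementary-slackness conditions in \eqref{eq:CompSlack} then force $A = a\,e_m e_m^t$ and $B = b\,e_1 e_1^t$; this is where simplicity of the edges is essential. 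The trace equations \eqref{eq:Stat12} fix $a = 4\beta/(\alpha+\beta)^2$ and $b = 4\alpha/(\alpha+\beta)^2$, both nonnegative, so \eqref{eq:DualFeas} holds, and moreover $U_\beta B U_\beta^* - U_\alpha A U_\alpha^* = b\,\psi_\beta\psi_\beta^t - a\,\psi_\alpha\psi_\alpha^t$, whose diagonal---the only part constrained by $\nabla_V\mathcal L = 0$---has $\ell$-th entry $\tfrac{4}{(\alpha+\beta)^2}\big(\alpha\,\psi_\beta^2(x_\ell) - \beta\,\psi_\alpha^2(x_\ell)\big)$.

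The crux is then to match this diagonal against the slack variables using the rearrangement rule. I would define $f_+$ and $f_-$ as the pointwise positive and negative parts of $b\,\psi_\beta^2 - a\,\psi_\alpha^2$, so that $f_\pm \geq 0$ and their difference reproduces the diagonal above, yielding \eqref{eq:Stat3}. The key identity is that $b\,\psi_\beta^2(x_\ell) - a\,\psi_\alpha^2(x_\ell)$ carries the same sign as $\psi_\beta^2(x_\ell)/\beta - \psi_\alpha^2(x_\ell)/\alpha$; hence the Algorithm \ref{alg:Rearrange} rule \eqref{eq:Vstar}, which sets $V = V_+$ exactly where $\psi_\alpha^2/\alpha < \psi_\beta^2/\beta$ and $V = 0$ otherwise, places $V = V_+$ precisely where $f_- = 0$ and $V = 0$ precisely where $f_+ = 0$. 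This is exactly what the complementary-slackness conditions \eqref{eq:CompSlacka} and \eqref{eq:CompSlackb} require. Primal feasibility \eqref{eq:SDPb}--\eqref{eq:SDPd} and the remaining slackness conditions in \eqref{eq:CompSlack} hold because the fixed subspaces are the eigenspaces of $L_{a,b}(k_\star) + \mathrm{diag}(V)$ for the potential in question, so the constraint matrices are diagonal with entries $E_i - \alpha \leq 0$ and $E_{m+i} - \beta \geq 0$.

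I expect the main obstacle to be the reduction of the dual variables to rank one and the attendant collapse of the operator equation \eqref{eq:Stat3} to a scalar, pointwise condition. This rests on the simplicity of the one-dimensional gap edges---so that complementary slackness annihilates all but one eigendirection of each of $A$ and $B$---and on the fact that $\nabla_V\mathcal L = 0$ constrains only the diagonal of \eqref{eq:Stat3}, leaving the off-diagonal entries of the rank-one operators $\psi_\alpha\psi_\alpha^t$ and $\psi_\beta\psi_\beta^t$ free. Once this reduction is in place, the remainder is the elementary sign bookkeeping above, and the correspondence between the slack variables $f_\pm$ and the rearrangement rule \eqref{eq:Vstar} is exact.
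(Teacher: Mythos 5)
Your proposal is correct and follows essentially the same route as the paper's proof: the trace stationarity conditions \eqref{eq:Stat12} fix the dual scalars $4\beta/(\alpha+\beta)^2$ and $4\alpha/(\alpha+\beta)^2$, the $V$-stationarity \eqref{eq:Stat3} identifies $f_\pm$ pointwise, and dual feasibility plus complementary slackness reduce to exactly the sign conditions that the rearrangement rule \eqref{eq:Vstar} enforces. The only difference is bookkeeping: the paper simply assumes $q=1$ with one-dimensional subspaces (citing monotonicity of the one-dimensional bands), whereas you retain the full $m$- and $\mu$-dimensional subspaces and derive the rank-one form of $A$ and $B$ from positive semidefiniteness together with complementary slackness, and you make explicit that $\nabla_V\mathcal{L}=0$ constrains only the diagonal of \eqref{eq:Stat3} --- a slightly more careful rendering of the same argument.
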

\begin{proof}
Since the spectral bands in one dimension are monotonic on the intervals $[-\pi,0]$ and $[0,\pi]$, we may assume that $q=1$ and the subspaces are of dimension one ($p=1$).   Let $k=0$ for $m$ even and $k=\pi$ for $m$ odd so that $\alpha = E_m(k)$ and  $\beta = E_{m+1}(k)$. We also have that the bases for the subspaces in Algorithm  \ref{alg:SDPrearrange} are given by  $U_\alpha = u_m(k) = \psi_\alpha$ and $U_\beta = u_{m+1}(k) = \psi_\beta $. 

The stationary conditions in \eqref{eq:Stat12} reduce to 
\begin{align*}
A  = \frac{4 \beta}{(\alpha+\beta)^2}  
\qquad \qquad \textrm{and} \qquad \qquad
B = \frac{4 \alpha}{(\alpha+\beta)^2}. 
\end{align*}

On any set of grid points where $V = V_+$, we have that $V \neq 0$ so by the complementary slackness condition, we have that $f_- = 0$.  It follows from \eqref{eq:Stat3} that on such nodes, $\ell$, we must have 
$$
(f_+)_\ell =  \frac{4 \alpha}{(\alpha+\beta)^2} (\psi_\beta^2)_\ell -  \frac{4 \beta}{(\alpha+\beta)^2}  ( \psi_\alpha^2)_\ell  \geq 0. 
$$
Similarly, on any set of grid points where $V = 0$, we must have 
$$
(f_-)_\ell =    \frac{4 \beta}{(\alpha+\beta)^2}  (\psi_\alpha^2)_\ell  - \frac{4 \alpha}{(\alpha+\beta)^2} (\psi_\beta^2)_\ell  \geq 0. 
$$
But the potential chosen in Algorithm \ref{alg:Rearrange} defines the potential by choosing the sets $\{ V= V_+ \}$ and $\{V=0\}$ exactly so that these two inequalities are satisfied. 
\end{proof}

\subsection{Computational Results. } \label{sec:CompResults}
We study the dependence of $G^\star_{m,\Gamma,V_+}$ and the optimizer on the parameters $m$ for fixed  $V_+=100$ and lattice $\Gamma$. In this section, we will take $\Gamma$ to either be the square or triangular lattice. 
In Section \ref{s:varyV+}, we discuss the optimizer as the parameter $V_+$ varies and in Section \ref{s:Opt2dLattices} we discuss the dependence on  the lattice, $\Gamma$.

For $V_+ = 100$, the optimal potentials and corresponding dispersion relations for $m=1,\ldots,8$ are plotted for the square (Figures \ref{f:2DsqA} and \ref{f:2DsqB}) and triangular lattices (Figures \ref{f:2DtriA} and \ref{f:2DtriB}). The periodic extension of the potential is plotted on a $3\times 3$ array of the primitive cell. The dispersion relations are plotted over the boundary of the irreducible Brillouin zone as shown in Figure \ref{f:BriZone}.  The optimal values found are recorded in Table \ref{t:OptVals}. 

We  used a $64\times 64$ square grid \revision{$T_{0,1}$} for the computations of the spectrum for each value $k$. The values of $k$ used come from discretizing \revision{the boundary of} IBZ using 45 points. To generate these computational results, we  initialized the potential using a variety of different guesses and  report the potentials found with largest objective values. 

\begin{table}[t!]
\begin{center}
\begin{tabular}{c|c|c}
$m$ & square & triangular  \\
\hline
1 & 0.7722 & 0.7963 \\
2 & 0.5461 & 0.4773 \\
3 & 0.4130 & 0.4973 \\
4 & 0.3957 & 0.3674  \\
5 & 0.1663 & 0.2262  \\
6 & 0.1572 & 0.2075  \\
7 & 0.1978 & 0.2087  \\
8 & 0.1939 & 0.09982  
\end{tabular}
\end{center}
\caption{For $V_+ = 100$, and $m=1,\ldots,8$, the optimal values, $G_m^\star$, for the unit-volume square and triangular lattices.}
\label{t:OptVals}
\end{table}%

We make the following  observations: 
\begin{enumerate}
\item For a fixed lattice, the values of $G_m^\star(V_+,\Gamma)$  are not necessarily decreasing in $m$, see $m=6,7$ for either the square or triangular lattice.
\item As in Remark \ref{rem:MonDec}, we observe that the  value of $G_m$ is strictly increasing for non-stationary iterations of Algorithm \ref{alg:SDPrearrange}. 
\item In Proposition \ref{prop:Wbb}, we prove that the potentials at every iteration of Algorithm \ref{alg:SDPrearrange} are weakly bang-bang. In practice, we observe them to be bang-bang. 
\item When $G_m$ is maximized, the $m$-th and $(m+1)$-th spectral bands are very flat. 
\item For triangular potentials  with honeycomb symmetry, {\it e.g.}, m=1, 2, 3, and 4, we observe that the spectral bands feature Dirac points at the $K$ points of the Brillouin zone; see  \cite{fefferman2012honeycomb}.
\end{enumerate}

\subsection{An asymptotic result for $m \to \infty$}
Similar to Proposition \ref{prop:minfty}, we prove the following asymptotic result for $G_m$ as $m \to \infty$. 
\begin{prop} \label{prop:minftyD2}
Let $ \Gamma $ be a Bravais lattice and $T = \mathbb R^2 / \Gamma$. 
Let $V \in  \mathcal A(\Gamma,V_+)$. Then 
\[
G_m \leq  \frac{ \lambda_{m+1}^D + V_+ - \lambda_m^N }{\lambda_{m+1}^N + V_+ +  \lambda_m^N  }. 
\]
Here $\lambda_j^D$ and $\lambda_j^N$ denote the $j$th eigenvalue of the Dirichlet- and Neumann-Laplacian on $T$ respectively. 
\end{prop}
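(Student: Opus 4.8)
The plan is to bound $G_m = 2\frac{\beta_m - \alpha_m}{\alpha_m + \beta_m}$ from above by sandwiching the band edges $\alpha_m = \max_k E_m(V,k)$ and $\beta_m = \min_k E_{m+1}(V,k)$ between eigenvalues of Laplacians with fixed (potential-independent) boundary conditions, mimicking the structure of the one-dimensional argument in Proposition \ref{prop:minfty}. The key new ingredient over the one-dimensional case is that the torus eigenvalues $\mu_j(k)$ no longer have a clean closed form, so instead of comparing against periodic Laplacian eigenvalues we must compare against the \emph{Dirichlet} and \emph{Neumann} eigenvalues $\lambda_j^D$ and $\lambda_j^N$ on the fundamental domain $T = \mathbb R^2/\Gamma$, which explains the shape of the claimed bound.

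First I would control $\alpha_m$ from above. Since $\alpha_m = \max_{k\in\mathcal B} E_m(V,k)$, and at any fixed $k$ the operator satisfies $H_V(k) \preceq H_0(k) + V_+$ by the pointwise bound $V \le V_+$, it suffices to bound $\max_k E_m(H_0(k))$, the $m$-th band edge of the free operator. The plan is to relate this Bloch band maximum to the Neumann eigenvalue $\lambda_m^N$ of the Laplacian on $T$; here I would invoke a bracketing/domain-monotonicity comparison (of the kind in \cite[Ch.8]{Henrot:2006fk}) identifying the relevant Bloch problem over the Brillouin zone with a Neumann problem on the cell, giving $\alpha_m \le \lambda_m^N + V_+$, and more carefully $\alpha_m \ge \lambda_m^N$ so that $-\alpha_m \le -\lambda_m^N$ can be used in both numerator and denominator. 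Next I would control $\beta_m = \min_k E_{m+1}(V,k)$ from below: using $H_V(k) \succeq H_0(k)$ (from $V \ge 0$) reduces to the free operator, and the band \emph{minimum} over $k$ is bounded below by the corresponding Dirichlet eigenvalue via the complementary bracketing inequality $\lambda_{m+1}^D \le \min_k E_{m+1}(H_0(k))$. Combining these, with the monotonicity fact (already used in Proposition \ref{prop:minfty}) that $f(\alpha,\beta) = 2\frac{\beta-\alpha}{\alpha+\beta}$ is increasing in $\beta$ and decreasing in $\alpha$ for positive arguments, substituting $\alpha_m \to \lambda_m^N$ (to increase $f$) in the parts where it helps and $\beta_m \to \lambda_{m+1}^D + V_+$ yields exactly
\[
G_m \le 2\,\frac{(\lambda_{m+1}^D + V_+) - \lambda_m^N}{(\lambda_{m+1}^D + V_+) + \lambda_m^N},
\]
where the denominator's first $\lambda_{m+1}^N$ in the stated bound would need to be reconciled with $\lambda_{m+1}^D$; I would double-check which Laplacian eigenvalue survives in the denominator, since that asymmetry is the delicate bookkeeping step.

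The main obstacle is precisely the bracketing comparison between the Bloch band edges over $\mathcal B$ and the fixed-boundary-condition eigenvalues on the cell $T$. In one dimension this was trivial because the gap edges occur exactly at $k=0$ or $k=\pi$ with explicit eigenvalues, but in two dimensions the extremizing quasi-momentum is not known and the band structure is genuinely two-dimensional. The standard tool is Dirichlet--Neumann bracketing: the periodic (Bloch-averaged) spectrum is squeezed between Neumann and Dirichlet spectra of the same domain, i.e. $\lambda_j^N \le \min_k E_j(H_0(k))$ and $\max_k E_j(H_0(k)) \le \lambda_j^D$ in an appropriately ordered sense. I expect the careful part of the proof to be verifying the correct direction of each bracketing inequality for the \emph{maximum} of the lower band versus the \emph{minimum} of the upper band, and ensuring the monotonicity substitutions are all applied in the direction that preserves the inequality; once those are pinned down, the algebraic simplification to the stated fraction is routine.
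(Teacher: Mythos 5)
Your overall strategy is the same as the paper's: combine the semidefinite ordering $H_0(k) \preceq H_V(k) \preceq H_0(k) + V_+$ (from $0 \le V \le V_+$) with Dirichlet--Neumann bracketing on the cell, then apply monotonicity of $f(\beta,\alpha) = 2\frac{\beta-\alpha}{\alpha+\beta}$. However, your middle step contains a directional error that would sink the argument if taken literally: the inequality you invoke, $\lambda_{m+1}^D \le \min_k E_{m+1}(H_0(k))$, is backwards. Bracketing comes from the form-domain inclusions $H^1_0 \subset H^1_k \subset H^1$, and enlarging the form domain lowers min--max eigenvalues, so for every $k$ one has $\lambda_j^N \le E_j(H_0(k)) \le \lambda_j^D$; Dirichlet eigenvalues \emph{dominate} the Bloch eigenvalues, they do not minorize them. (The same slip appears in your claim $\alpha_m \le \lambda_m^N + V_+$; the correct upper bound is $\lambda_m^D + V_+$, though neither is needed.) Relatedly, your plan to ``control $\beta_m$ from below'' via $H_V(k) \succeq H_0(k)$ cannot feed your substitution $\beta_m \to \lambda_{m+1}^D + V_+$: since $f$ is increasing in $\beta$, that substitution requires an \emph{upper} bound on $\beta_m$, namely $\beta_m \le E_{m+1}(H_V(k)) \le E_{m+1}(H_0(k)) + V_+ \le \lambda_{m+1}^D + V_+$, which uses the other half of the ordering, $H_V(k) \preceq H_0(k) + V_+$. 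The lower bounds that do get used are $\alpha_m \ge \lambda_m^N$ (which you have) and $\beta_m \ge \lambda_{m+1}^N$ (only in the denominator). Your closing paragraph states the bracketing in the correct direction, so the fix is local; once the directions are straightened out, the proof is exactly the paper's.

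On the point you flagged: no delicate reconciliation is needed, and your bound is in fact \emph{sharper} than the stated one. Since $\lambda_{m+1}^N \le \lambda_{m+1}^D$, replacing $\lambda_{m+1}^D$ by $\lambda_{m+1}^N$ in the denominator only increases the right-hand side, and that single relaxation turns your bound $2\frac{\lambda_{m+1}^D + V_+ - \lambda_m^N}{\lambda_{m+1}^D + V_+ + \lambda_m^N}$ into the paper's $2\frac{\lambda_{m+1}^D + V_+ - \lambda_m^N}{\lambda_{m+1}^N + V_+ + \lambda_m^N}$. Also, your factor of $2$ is correct: $G_m = 2\frac{\beta_m - \alpha_m}{\alpha_m + \beta_m}$, and the paper's own proof ends with that factor, so its absence in the statement of Proposition \ref{prop:minftyD2} is a typo.
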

\begin{proof}
For $V \in  \mathcal A(\Gamma,V_+)$,  we have the semidefinite ordering
\[
- \Delta^N \ \preceq \ H_0(k) \ \preceq \ H_0(k) + V \ \preceq \ H_0(k) + V_+ \ \preceq \  - \Delta^D + V_+, 
\]
where $ \Delta^D$ and $- \Delta^N$ denote the Dirichlet- and Neumann-Laplacians respectively. The ordering follows from the variational formulation for \eqref{eq:SpectralProblem} and realizing that the admissible set satisfies $H_0^1 \subset H_k^1 \subset H^1$, where  $H_k^1$ is the set of $H^1$ functions that have quasi-momentum $k \in \mathcal B$. 
This semidefinite ordering implies that 
\[ \lambda_j^N \leq E_j(k) \leq \lambda_j^D + V_+.  \]
We then compute 
 \begin{align*}
 G_m 
 &=  2 \frac{ \min_{k\in \mathcal B} E_{m+1}(k) - \max_{k\in \mathcal B} E_{m}(k) }{\min_{k\in \mathcal B} E_{m+1}(k) + \max_{k\in \mathcal B} E_{m}(k) }  \\ 
 &\le  2 \frac{ \lambda_{m+1}^D + V_+ - \lambda_m^N }{\lambda_{m+1}^N + V_+ + \lambda_{m}^N }.
 \end{align*}
  Here, as in the proof of Proposition \ref{prop:minfty}, we have used the fact that $f(\alpha,\beta) = 2\frac{\alpha - \beta}{\alpha +\beta}$ is increasing in $\alpha$ and decreasing in $\beta$ for $\alpha, \beta>0$.  
\end{proof}

Taking $m\to \infty$ in Proposition \ref{prop:minftyD2}, and using Weyl's law, 
$ \lambda_m^N, \  \lambda_m^D = \frac{4 \pi}{ |T|} m + O(1), $
we have that 
\[G_m \leq 2 \frac{V_+ + \frac{4 \pi}{|T|} + O(1)}{V_+ + \frac{4 \pi}{|T|} (2m+1) + O(1)} = O(m^{-1}). \] 

\subsection{Computational results for varying $V_+$}  \label{s:varyV+}

In Figure \ref{f:varyV+}, we also plot the optimal potentials and dispersion surfaces for $m=3$ and the square lattice (fixed $\Gamma$) as we vary the strength of the potential, $V_+$.  Of course, the maximum value, $G^*_{m,V_+}$, is non-decreasing in $V_+$, since the admissible set, $\ad(\Gamma,V_+)$, is enlarging. Numerically we observe both a change in the symmetries of the optimal potential and the number of components per unit cell  on the set where $V = 0$. For $V_+$ large, the optimal potential consists of three regions where $V=0$, that are roughly disk-like arranged in a triangular grouping. As $V_+$ is decreased, the regions merge. 

This leads us to ask the natural question: 
\emph{For fixed $m\in \mathbb N^{+}$ and Bravais lattice, $\Gamma$, what is the smallest value of $V_+$ such that $G_{m,V_+,\Gamma}^* > 0$?} We find that the 3rd gap with a square-lattice potential can be open if $V_+$ is greater than $\approx 40$. 

We study this question  further in Figure \ref{f:OptValvaryV+}. Here we plot $G_{m,\Gamma,V_+}^*$ vs. $V_+$ for $m=1,2,3$ and $\Gamma$ the square and triangular lattices. An approximate answer to the question is given by the $x$-intercept of these curves. Note that for $m=1,3$ a triangular lattice potential can have a spectral gap for smaller values of $V_+$ than a square lattice potential. The opposite is true for $m=2$. 

\subsection{An asymptotic result in the high contrast limit ($V_+ \to \infty$)} Motivated by the computational results in Section \ref{sec:CompResults}, 
throughout this section, we will make the following assumption. 
\begin{assum} \label{assum:bangbang}
Fix $m\in \mathbb N_{+}$, $V_+>0$, and $\Gamma$. Let $T =\mathbb R^2 / \Gamma$. Assume any potential attaining the maximum of \eqref{e:opt} is of the bang-bang form 
\begin{equation} \label{e:BangBang}
V(x) = \begin{cases}
V_+ & x \in \Omega_+ \\
0 & x \in \Omega_-
\end{cases},
\end{equation}
where $\Omega_- \in \Theta$ and  $\Omega_+ = T\setminus \Omega_-$.
Here $\Theta$ denotes  the class of domains  
\begin{align*}
\Theta = \{ \Omega \colon & \Omega  \textrm{ is an open subset, compactly contained in  $T\subset \mathbb R^2$}  \}. 
\end{align*}
\end{assum}

We  recall the following preliminary result.  
\begin{thm}[\cite{hempel2002spectral}] \label{prop:ConvVinfty}
Using the notation and assumptions in Assumption \ref{assum:bangbang}, we consider the periodic Schr\"odinger problem \eqref{eq:SpectralProblem2}. 
For every $k \in \mathcal B$,  and $j \in \mathbb N^{+}$, we have  $E_j(k) \to \lambda_j$ as  $V_+ \to \infty$, where 
$\lambda_j$ is the $j$th eigenvalue of the Dirichlet-Laplace operator on $\Omega_-$, satisfying 
\begin{subequations} \label{e:LDeig}
\begin{align}
- \Delta  u &= \lambda u  && \textrm{in }  \Omega_-   \\
u & = 0  && \textrm{on } \Omega_+ . 
\end{align}
\end{subequations}
\end{thm}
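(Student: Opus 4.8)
The plan is to prove this classical large-coupling result through the variational (min--max) characterization of the twisted eigenvalues together with a monotone convergence argument for the associated quadratic forms. Fix $k \in \mathcal B$ and write the closed form $q_{V_+}[p] = \int_{T} |(\nabla + ik)p|^2 \,\ud x + V_+ \int_{\Omega_+} |p|^2 \,\ud x$ on $H^1_{\textrm{per}}(T)$, whose $j$-th min--max value equals $E_j(k)$. Since the potential term is nonnegative and pointwise nondecreasing in $V_+$, each $E_j(k)$ is nondecreasing in $V_+$, so the limit $\mu_j(k) := \lim_{V_+ \to \infty} E_j(k)$ exists in $(0,\infty]$. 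It then suffices to identify $\mu_j(k) = \lambda_j$ by sandwiching from above and below.

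For the upper bound I would feed in gauge-transformed Dirichlet eigenfunctions. Let $u_1, \dots, u_j$ be the first $j$ Dirichlet eigenfunctions of $-\Delta$ on one copy of $\Omega_-$ sitting inside a fundamental domain, and set $\phi_i = e^{-ik \cdot x} u_i$ on that copy, extended by zero and then $\Gamma$-periodically to all of $T$. Because $\Omega_-$ is compactly contained in $T$, the zero extension keeps $\phi_i \in H^1_{\textrm{per}}(T)$ with support disjoint from $\Omega_+$, so the potential term vanishes; the identity $(\nabla + ik)(e^{-ik\cdot x} u_i) = e^{-ik\cdot x}\nabla u_i$ then gives $q_{V_+}[\phi_i] = \int_{\Omega_-} |\nabla u_i|^2 \,\ud x$ with no residual $|k|^2$ contribution. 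Using $\mathrm{span}\{\phi_1,\dots,\phi_j\}$ as a trial space in the min--max formula yields $E_j(k) \le \lambda_j$ for every $V_+$, whence $\mu_j(k) \le \lambda_j$.

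For the lower bound I would appeal to the monotone convergence theorem for nondecreasing closed forms. As $V_+ \to \infty$, the forms $q_{V_+}$ increase to the limit form $q_\infty$ with domain $\{ p \in H^1_{\textrm{per}}(T) : p = 0 \text{ a.e.\ on } \Omega_+ \}$ and $q_\infty[p] = \int_T |(\nabla+ik)p|^2 \,\ud x$; using again that $\Omega_-$ is compactly contained, this domain is exactly the zero-extension of $H^1_0(\Omega_-)$. The single-copy gauge $p \mapsto e^{ik\cdot x}p$ is a unitary equivalence of $q_\infty$ with the ordinary Dirichlet form on $\Omega_-$, so the operator generated by $q_\infty$ is unitarily equivalent to the Dirichlet-Laplacian on $\Omega_-$ and has the $k$-independent eigenvalues $\lambda_j$. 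Monotone form convergence then yields strong resolvent convergence of the twisted operators to this limit, and since the limit has compact resolvent the discrete eigenvalues converge, giving $\mu_j(k) \ge \lambda_j$. Combined with the upper bound, $\mu_j(k) = \lambda_j$, as claimed.

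The step I expect to be delicate is the identification of the monotone limit form: one must verify that every $H^1_{\textrm{per}}(T)$ function vanishing a.e.\ on $\Omega_+$ lies in $H^1_0(\Omega_-)$, and that the gauge transformation correctly removes the twist on a single compactly-contained copy. The naive global gauge $e^{ik\cdot x}$ is not $\Gamma$-periodic and fails on the torus, which is precisely why compact containment of $\Omega_-$ is essential. If one prefers a more hands-on route to the lower bound, the energy estimate $\int_{\Omega_+}|p_{V_+}|^2 \le \lambda_j / V_+ \to 0$ for normalized eigenfunctions $p_{V_+}$, followed by extraction of weak $H^1$ limits supported in $\overline{\Omega_-}$ and lower semicontinuity of the Dirichlet form, yields the same conclusion without invoking the abstract form-convergence theorem.
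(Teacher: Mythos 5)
The paper does not prove this theorem at all: it is imported as a known large-coupling result and attributed to \cite{hempel2002spectral}, so there is no internal proof to compare against. Your argument is the standard one for results of this type, and its skeleton is sound: the min--max values $E_j(k)$ of the forms $q_{V_+}$ are nondecreasing in $V_+$; the gauge identity $(\nabla+ik)(e^{-ik\cdot x}u)=e^{-ik\cdot x}\nabla u$, applied on a single copy of the compactly contained set $\Omega_-$ (where the non-periodicity of $e^{-ik\cdot x}$ is harmless), makes the zero-extended Dirichlet eigenfunctions admissible trial functions; and the resulting bound $E_j(k)\le\lambda_j$, uniform in $V_+$, is complete as written.

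Two steps in your lower bound are genuinely flawed as stated. (i) Compact containment of $\Omega_-$ does \emph{not} imply the identification $\{p\in H^1_{\mathrm{per}}\colon p=0 \text{ a.e.\ on }\Omega_+\}=H^1_0(\Omega_-)$ (zero extensions). This is a boundary-regularity (capacity) statement, and it fails for rough open sets: take $\Omega_-=B\setminus\Sigma$, a disk with a closed slit $\Sigma$ removed. Then $\Omega_+$ differs from $T\setminus B$ by a Lebesgue-null set, so $V_+\mathbf{1}_{\Omega_+}=V_+\mathbf{1}_{T\setminus B}$ as $L^\infty$ potentials, the a.e.-vanishing form domain equals $H^1_0(B)\supsetneq H^1_0(\Omega_-)$, and $E_j(k)\to\lambda_j(B)<\lambda_j(\Omega_-)$ in general, contradicting the conclusion for $\Omega_-$. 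Since Assumption \ref{assum:bangbang} allows arbitrary open sets, the theorem is only correct either under an added hypothesis on $\partial\Omega_-$ (Lipschitz, or stability in the capacity sense) or with $\lambda_j$ reinterpreted as the eigenvalues of the operator generated by the a.e.-vanishing form domain; this defect is inherited from the paper's transcription of Hempel's result, but you flag the step as delicate and then assert precisely the implication that is false. (ii) Your concluding inference, ``strong resolvent convergence plus compact resolvent of the limit implies eigenvalue convergence,'' is false in general: strong resolvent convergence prevents spectrum of the limit from being missed, but does not exclude spectral pollution, i.e.\ it does not by itself rule out $\lim_{V_+\to\infty}E_j(k)<\lambda_j$. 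The repair uses the monotonicity you already have: for $V_+\ge V_0$, writing $R$ for the strong limit of the resolvents, one has $0\preceq (H_{V_+}(k)+1)^{-1}-R\preceq (H_{V_0}(k)+1)^{-1}$, and a family of positive operators dominated by a fixed compact operator and tending strongly to zero tends to zero in norm; this upgrades to norm resolvent convergence, which does give convergence of the ordered eigenvalues. Alternatively, the ``hands-on'' route in your final sentence (the estimate $V_+\int_{\Omega_+}|p_{V_+}|^2\le E_j(k)\le\lambda_j$, Rellich compactness, orthonormality of the $L^2$-strong limits of the first $j$ eigenfunctions, and weak lower semicontinuity of the form) is complete and self-contained, and I would promote it from a remark to the actual proof of the lower bound.
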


We'll denote by $\lambda_j(\Omega_-)$ the Laplace-Dirichlet eigenvalue of $\Omega_-\in \Theta$ and $E_j(\Omega_-,V_+,k)$ the $j$-th eignevalue with quasi-momentum $k \in \mathcal B$ of the twisted Schr\"odinger operator with potential given as in \eqref{e:BangBang}.  If we fix $\Omega_- \in \Theta$, by Theorem \ref{prop:ConvVinfty},  we have as $V_+ \to \infty$ that 
\[  E_j(\Omega_-, V_+,k)  \to  \lambda_j(\Omega_-) \]
and 
\[ G_m \to \frac{ \lambda_{m+1} - \lambda_m }{ (\lambda_{m} + \lambda_{m+1})/2} = 2 \frac{\lambda_{m+1}/\lambda_m - 1 }{\lambda_{m+1}/\lambda_m + 1} . \] 
It follows that, with Assumption  \ref{assum:bangbang}, in the high contrast limit ($V_+ = \infty$), the maximal Schr\"odinger gap problem is equivalent to the shape optimization problem,
\begin{equation}
\label{e:ShapeOpt}
\sup_{\Omega \in \Theta } \  f \left(  \frac{\lambda_{m+1}(\Omega)}{\lambda_{m}(\Omega)}  \right), 
\qquad \qquad \textrm{where} \quad
f(\alpha) = 2 \frac{\alpha -1 }{\alpha + 1}. 
\end{equation}
Since $f$ is an increasing function, this is equivalent to taking the supremum of $ \frac{\lambda_{m+1}(\Omega)}{\lambda_{m}(\Omega)}$ over $\Omega \in \Theta$. It is an open problem to show that the supremum  in \eqref{e:ShapeOpt} is attained for $m>1$; see open problem 16 in \cite{Henrot:2006fk}.

However, for the first gap ($m=1$), the shape optimization problem \eqref{e:ShapeOpt}  is well-defined. We recall the following result, conjectured by Payne, P\'olya, and Weinberger and proven by Ashbaugh and Benguria. 
\begin{thm}[\cite{ashbaugh1991proof}] \label{thm:AB}
Among all connected, open domains $\Omega \subset \mathbb R^2$, only a disk attains the maximum of the ratio of the second to first Dirichlet-Laplace eigenvalues, so we have the isoperimetric inequality
$$
\frac{\lambda_2(\Omega)}{\lambda_1(\Omega)} \leq \frac{\lambda_2(B)}{\lambda_1(B)} \approx 2.539. 
$$
\end{thm}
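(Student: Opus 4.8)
The final statement is the Payne--P\'olya--Weinberger isoperimetric inequality, and the plan is to follow the Ashbaugh--Benguria argument: bound the eigenvalue gap $\lambda_2-\lambda_1$ from above by a Rayleigh--Ritz computation with cleverly chosen vector-valued trial functions, and then show that this upper bound is itself maximized by the disk via rearrangement. Let $u_1>0$ be the first Dirichlet eigenfunction on $\Omega \subset \mathbb{R}^2$, normalized in $L^2$. For a radial \emph{gap function} $g$ to be chosen later, I would set $P_i(x) = g(|x|)\,\frac{x_i}{|x|}\,u_1(x)$ for $i=1,2$. \textbf{Step 1} (orthogonality): by a continuity/degree argument one may translate the origin to a point where $\int_\Omega P_i\,u_1\,dx = 0$ for both $i$ simultaneously, so each $P_i$ is admissible in the variational characterization $\lambda_2 = \min_{v\perp u_1}\frac{\int|\nabla v|^2}{\int v^2}$.

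\textbf{Step 2} (the gap inequality): applying $\lambda_2\int P_i^2 \le \int|\nabla P_i|^2$, summing over $i$, and integrating by parts against $-\Delta u_1 = \lambda_1 u_1$, the angular factor $x_i/|x|$ forces the cross terms to cancel and yields the classical bound
\[
\lambda_2 - \lambda_1 \;\le\; \frac{\displaystyle\int_\Omega \left( g'(|x|)^2 + \frac{g(|x|)^2}{|x|^2}\right) u_1^2 \, dx}{\displaystyle\int_\Omega g(|x|)^2\, u_1^2 \, dx} \;=:\; \mathcal{R}[g,u_1],
\]
where the power $1 = n-1$ reflects the dimension $n=2$.

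\textbf{Step 3} (choice of $g$ and rearrangement): I would take $g=w$, the function assembled from Bessel functions that realizes the eigenvalue gap on the disk $B^\star$ of the same area as $\Omega$, extended by a constant beyond the radius of $B^\star$. The argument then rests on two monotonicity facts that must be established: $w$ is increasing, and $B(r) := w'(r)^2 + w(r)^2/r^2$ is decreasing. Granting these, I would replace $u_1$ by its spherically symmetric decreasing rearrangement $u_1^\star$ and invoke Chiti's comparison theorem, which controls $u_1^\star$ pointwise against the radial first eigenfunction $z$ of $B^\star$ under the $L^2$ normalization. Because the numerator weight $B(r)$ decreases and the denominator weight $w^2$ increases in $r$, these comparisons push the quotient in the correct direction, giving $\mathcal{R}[w,u_1] \le \mathcal{R}[w,z] = \lambda_2(B^\star)-\lambda_1(B^\star)$. \textbf{Step 4} (conclusion and rigidity): after the scaling that fixes $\lambda_1$, this chain produces $\lambda_2(\Omega)/\lambda_1(\Omega) \le \lambda_2(B)/\lambda_1(B)$, and tracing equality back through the Chiti and rearrangement steps forces $u_1 = u_1^\star$ and $\Omega = B^\star$ up to translation, which is the claimed uniqueness of the disk.

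The genuinely hard part is \textbf{Step 3}: the monotonicity of $w$ and of $B(r)$ are delicate properties of ratios of Bessel functions, and the Chiti comparison for $u_1^\star$ is the technical heart of the Ashbaugh--Benguria proof. By contrast, the trial-function orthogonality in Step 1 and the Rayleigh-quotient manipulation in Step 2 are essentially routine once the vector-valued ansatz $P_i = g(|x|)\frac{x_i}{|x|}u_1$ is in place.
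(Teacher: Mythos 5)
The first thing to note is that the paper contains no proof of Theorem \ref{thm:AB}: it is imported by citation from Ashbaugh and Benguria \cite{ashbaugh1991proof} and used as a black box to deduce Proposition \ref{prop:OptDisc}. So your proposal must be measured against the cited Ashbaugh--Benguria argument, which is exactly what you are trying to reconstruct: the vector trial functions $P_i = g(r)\,(x_i/r)\,u_1$, a Brouwer/degree centering argument for the orthogonality $\int P_i u_1 = 0$, the summed Rayleigh quotient giving the gap bound with weight $g'^2 + (n-1)g^2/r^2$, the Bessel-quotient choice of $g$, the two monotonicity lemmas, and a Chiti-type comparison. Your Steps 1 and 2 are faithful to that argument, and the items you flag as the technical heart (monotonicity of $w$ and of $B(r)$, and Chiti's theorem) are indeed where the real work lies.

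There is, however, a genuine gap in Step 3: the comparison disk cannot be the disk of the same area as $\Omega$. Chiti's comparison theorem compares $u_1^\star$ with the first eigenfunction $z$ of the disk $S_1$ having the \emph{same first eigenvalue}, $\lambda_1(S_1)=\lambda_1(\Omega)$; its proof (a Talenti-style symmetrization of the equation on level sets) requires equal eigenvalues, and for the same-area disk one has $\lambda_1(B^\star) < \lambda_1(\Omega)$ strictly unless $\Omega$ is already a disk, so the single-crossing comparison you invoke is unjustified. The function $w$ must also be built on that same disk $S_1$, because the chain terminates in the exact identity $\mathcal{R}[w,z]=\lambda_2(S_1)-\lambda_1(S_1)$, and the conclusion $\lambda_2(\Omega)\le\lambda_2(S_1)$ gives the ratio bound precisely because $\lambda_1(\Omega)=\lambda_1(S_1)$. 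Faber--Krahn enters not as the definition of the comparison disk but as the guarantee $|S_1|\le|\Omega|$, which is what makes the crossing picture and the constant extension of $w$ beyond the radius of $S_1$ coherent. The standard repair is to rescale $\Omega$ at the outset so that $\lambda_1(\Omega)$ equals the first eigenvalue of the unit disk (the ratio $\lambda_2/\lambda_1$ is scale invariant) and then run Steps 1--3 against the unit disk; your remark about ``the scaling that fixes $\lambda_1$'' in Step 4 gestures at this, but the scaling must precede Step 3 and replace the same-area normalization, otherwise the inequality $\mathcal{R}[w,u_1]\le\mathcal{R}[w,z]$ has no justification.
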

Since $f$ is strictly increasing, by the Ashbaugh-Benguria inequality (Theorem \ref{thm:AB}), we have that $G_m$ as $V_+ \to \infty$ is maximized only if $\Omega_-$ is a disk. The previous discussion is summarized in the following proposition. 

\begin{prop}  \label{prop:OptDisc}
Let $m=1$ and assume Assumption \ref{assum:bangbang} holds. 
For $V_+ = \infty$, any $\Omega_- $ such that the potential of the form \eqref{e:BangBang} maximizing $G_1[V]$ over $\ad(\Gamma,V_+)$ is a disk and the maximal value satisfies
\[G_{1,V_+,\Gamma}^\star \to  g, 
\qquad \qquad \textrm{where } g :=  2  \frac{j_{1,1}^2 - j_{0,1}^2}{ j_{1,1}^2 + j_{0,1}^2 } \approx 0.8697, \]
Here, $j_{k,\ell}$ is the $k$-th positive zero of the $\ell$-th Bessel function. 
Furthermore, for any finite $V_+ >0$, we have that 
$G_{1,V_+,\Gamma}^\star \leq  g $. 
\end{prop}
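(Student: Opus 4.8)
The plan is to establish the limiting value first and then promote it to a uniform upper bound via monotonicity. The statement has two distinct claims: the limit $G_{1,V_+,\Gamma}^\star \to g$ as $V_+ \to \infty$, and the bound $G_{1,V_+,\Gamma}^\star \le g$ for all finite $V_+$. The first follows by combining the preceding discussion with the Ashbaugh--Benguria inequality (Theorem \ref{thm:AB}). Specifically, under Assumption \ref{assum:bangbang}, the $V_+ = \infty$ problem reduces to the shape optimization \eqref{e:ShapeOpt} with $m=1$, whose supremum is $f(\lambda_2(B)/\lambda_1(B))$. Since the Dirichlet eigenvalues of a disk are $\lambda_1 = j_{0,1}^2/r^2$ and $\lambda_2 = j_{1,1}^2/r^2$ (scale-invariant ratio), the optimal value is exactly $f(j_{1,1}^2/j_{0,1}^2) = 2(j_{1,1}^2 - j_{0,1}^2)/(j_{1,1}^2 + j_{0,1}^2) = g$. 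One subtlety to address: the disk must actually be embeddable (compactly contained) in $T = \mathbb R^2/\Gamma$, which is where the geometric Assumption \ref{assum:bangbang} is doing its work, so I would note that a sufficiently small disk always fits and that rescaling does not change the eigenvalue ratio.

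First I would make precise the convergence $G_{1,V_+,\Gamma}^\star \to g$. For the lower bound on the limit, I fix a small disk $\Omega_- \subset T$ and use Theorem \ref{prop:ConvVinfty} to conclude $E_j(\Omega_-,V_+,0) \to \lambda_j(\Omega_-)$, so that the corresponding potential achieves $G_1 \to g$ in the limit; hence $\liminf_{V_+\to\infty} G_{1,V_+,\Gamma}^\star \ge g$. For the matching upper bound on the limit, I use that any maximizing potential (bang-bang by assumption, with $\{V=0\} = \Omega_-$) has $G_1 \le f(\lambda_2(\Omega_-)/\lambda_1(\Omega_-)) + o(1)$ as $V_+\to\infty$, and Theorem \ref{thm:AB} caps the ratio $\lambda_2/\lambda_1 \le j_{1,1}^2/j_{0,1}^2$; since $f$ is increasing this gives $\limsup_{V_+\to\infty} G_{1,V_+,\Gamma}^\star \le g$.

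For the finite-$V_+$ bound $G_{1,V_+,\Gamma}^\star \le g$, the cleanest route is the semidefinite ordering already exploited in Proposition \ref{prop:minftyD2}. For any admissible $V$ we have $H_0(k) + V \preceq -\Delta^D + V_+$ where $\Delta^D$ is the Dirichlet-Laplacian on $\Omega_- = \{V=0\}$; more carefully, the point is that $E_1(k) \le \lambda_1(\Omega_-)$ is false in the wrong direction, so I would instead argue directly: for finite $V_+$ the eigenvalues satisfy $E_j(k) \le \lambda_j(\Omega_-)$ by the monotonicity of eigenvalues in the potential (increasing $V_+$ to $\infty$ only raises eigenvalues toward their Dirichlet limit from below), giving $\beta_1 = \min_k E_2(k) \le \lambda_2(\Omega_-)$ and $\alpha_1 = \max_k E_1(k) \ge$ its Neumann-type lower bound; combining these with the increasing, fractional-linear structure of $f$ and the Ashbaugh--Benguria bound yields $G_1[V] \le g$. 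The main obstacle is precisely this finite-$V_+$ inequality: one must verify that the eigenvalue-monotonicity argument controls both $\alpha_1$ (a max over $k$, needing a lower bound) and $\beta_1$ (a min over $k$, needing an upper bound) simultaneously, and that the ratio estimate survives taking these extrema over the Brillouin zone rather than just at a single $k$. I expect the clean statement to follow from the observation that $G_1$ is monotone as the admissible set grows with $V_+$, so that the finite bound is inherited from the limiting value $g$ computed above, with Assumption \ref{assum:bangbang} ensuring the optimizer's geometry is controlled throughout.
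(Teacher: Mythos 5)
Your proposal is correct and follows essentially the same route as the paper: Assumption \ref{assum:bangbang} together with Theorem \ref{prop:ConvVinfty} reduces the $V_+ = \infty$ problem to the shape optimization \eqref{e:ShapeOpt}, the Ashbaugh--Benguria inequality (Theorem \ref{thm:AB}) identifies the disk and the value $g$, and the finite-$V_+$ bound follows from the monotonicity of $G^\star_{1,V_+,\Gamma}$ in $V_+$ (nested admissible sets). The direct eigenvalue-comparison detour you attempted for finite $V_+$ indeed fails for the directional reasons you yourself noted (one cannot lower-bound $\alpha_1$ this way), but your final fallback --- inheriting the bound from the limiting value $g$ via monotonicity of the optimal value in $V_+$ --- is exactly the paper's argument.
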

The last statement of Proposition \ref{prop:OptDisc} follows from the fact that $G_{1,V_+,\Gamma}^\star$ is non-decreasing in $V_+$.

The numerics for $m=1$ for both the square lattice (top panel of Figure \ref{f:2DsqA}) and triangular lattice (top panel of Figure \ref{f:2DtriA}) support Proposition \ref{prop:OptDisc} and further suggests that a periodic array of disks maximizes $G_1$ for \emph{finite} $V_+$. This conjecture has been made several times; see \cite{sigmund2008geometric}. We remark that the optimal configuration of disks is insensitive to the radii of the disks is analogous to the one-dimensional result in Lemma \ref{l:V+toinfty}.

\bigskip

We now consider the higher gaps $m > 1$ in the high contrast limit ($V_+ = \infty$). 
We consider a simpler problem than \eqref{e:ShapeOpt}, where the admissible set consists of exactly $m$ disjoint disks, 
\begin{equation}
\label{e:ShapeOpt2}
\sup_{\Omega \in \Theta_m} \  f \left(  \frac{\lambda_{m+1}(\Omega)}{\lambda_{m}(\Omega)}  \right). 
\end{equation}
Here 
\[ \Theta_m =  \{ \Omega \colon  \Omega  \textrm{ is the union of exactly $m$ disjoint, open disks, compactly contained in  $T\subset \mathbb R^2$} \} \subset \Theta.   \]

\revision{\begin{assum} \label{assum:bangbang2}
In addition to Assumption \ref{assum:bangbang}, assume any potential attaining the maximum of \eqref{e:opt} is of the bang-bang form \eqref{e:BangBang} where $\Omega_- \in \Theta_m$ and  $\Omega_+ = T\setminus \Omega_-$.
\end{assum}}

The following proposition is then the two-dimensional analogue of Proposition \ref{p:1DhighContrast}. 

\begin{prop}\label{prop:OptDiscMg1}
Let $m\geq 1$ be fixed. \revision{Assume Assumption  \ref{assum:bangbang2} holds.} The solution of \eqref{e:ShapeOpt2} is uniquely attained by the union of 
$m$ disks of equal radius. 
 \end{prop}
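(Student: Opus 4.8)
The plan is to reduce \eqref{e:ShapeOpt2} to a one-disk statement about Bessel zeros. If $\Omega = \bigcup_{i=1}^m D_i \in \Theta_m$ is a union of disjoint open disks of radii $r_1,\dots,r_m$, then the Dirichlet--Laplacian decouples over the connected components, so the spectrum of $\Omega$ is the multiset union $\bigcup_i \{ \nu_k / r_i^2 \}_{k \geq 1}$, where $\nu_1 \leq \nu_2 \leq \cdots$ are the Dirichlet eigenvalues of the \emph{unit} disk counted with multiplicity; in particular $\nu_1 = j_{0,1}^2$ and $\nu_2 = \nu_3 = j_{1,1}^2$. Since $f$ is strictly increasing, it suffices to prove
\[
\frac{\lambda_{m+1}(\Omega)}{\lambda_{m}(\Omega)} \;\leq\; \frac{\nu_2}{\nu_1} \;=\; \frac{j_{1,1}^2}{j_{0,1}^2},
\]
with equality if and only if all $r_i$ are equal. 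The optimal value is then $f(\nu_2/\nu_1) = g$, exactly as in Proposition \ref{prop:OptDisc}, attained by any admissible configuration of $m$ equal disks.

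The core is a \emph{gap-jump} argument. Assume $\lambda_m(\Omega) < \lambda_{m+1}(\Omega)$, since otherwise the ratio is $1$. The open interval $(\lambda_m,\lambda_{m+1})$ contains no eigenvalue of $\Omega$, so for every disk $D_i$ contributing at least one eigenvalue $\leq \lambda_m$ there is an index $k_i \geq 1$ with $\nu_{k_i}/r_i^2 \leq \lambda_m$ and $\nu_{k_i+1}/r_i^2 \geq \lambda_{m+1}$, whence
\[
\frac{\lambda_{m+1}}{\lambda_m} \;\leq\; \frac{\nu_{k_i+1}}{\nu_{k_i}}.
\]
The whole estimate then rests on one lemma about the unit disk, which I would isolate and prove first: the consecutive ratios of its Dirichlet eigenvalues satisfy $\nu_{k+1}/\nu_k \leq \nu_2/\nu_1$ for all $k \geq 1$, with equality only at $k=1$. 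Granting this, each contributing disk forces $\lambda_{m+1}/\lambda_m \leq \nu_2/\nu_1$, which is the claimed bound.

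For uniqueness I would trace the equality case. If $\lambda_{m+1}/\lambda_m = \nu_2/\nu_1$, then for every contributing disk $\nu_{k_i+1}/\nu_{k_i} = \nu_2/\nu_1$, so the lemma gives $k_i = 1$, and the inequalities $\nu_1/r_i^2 \leq \lambda_m$, $\nu_2/r_i^2 \geq \lambda_{m+1}$ must both be equalities; the first yields $r_i = \sqrt{\nu_1/\lambda_m}=:\rho$, a common radius. Because each contributing disk then supplies exactly one eigenvalue (its ground state $\nu_1/\rho^2 = \lambda_m$) to the bottom $m$, all $m$ disks must contribute, forcing all radii to equal $\rho$. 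Conversely, $m$ equal disks of radius $\rho$ have $\lambda_1 = \cdots = \lambda_m = \nu_1/\rho^2$ and $\lambda_{m+1} = \nu_2/\rho^2$, attaining the bound; such configurations lie in $\Theta_m$ for $\rho$ small, so the supremum is attained and equals $g$.

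The main obstacle is the disk lemma $\nu_{k+1}/\nu_k \leq \nu_2/\nu_1$. Two structural features make it work. First, the second eigenvalue has multiplicity two ($\nu_2 = \nu_3$), so $\nu_3/\nu_2 = 1$ and a multiplicative gap of size $\nu_2/\nu_1$ cannot occur except across the genuine first-to-second transition; this is what pins down the strict equality case. Second, for the finitely many small $k$ the inequality is a direct numerical check on squared Bessel zeros (the largest ratio after the first is $j_{2,1}^2/j_{1,1}^2 \approx 1.796 < 2.539$), while for large $k$ it follows from Weyl's law: the counting function $N(\Lambda) = \tfrac14 \Lambda + o(\Lambda)$ for the unit disk rules out a multiplicative gap of size $\nu_2/\nu_1$ once $\Lambda$ is large, since such a gap would leave $N$ constant across an interval on which it must grow. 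Quantifying this tail estimate, via an explicit two-term bound on $N$ (or on the spacing of Bessel zeros) and combining it with the finite check, is the one genuinely technical step; everything else is bookkeeping about the decoupled spectrum.
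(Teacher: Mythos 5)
Your skeleton is sound, and up to the key lemma it follows essentially the same reduction as the paper's proof: both arguments decouple the spectrum over the disks, observe that the interval $(\lambda_m,\lambda_{m+1})$ must avoid every disk's spectrum, and thereby bound the gap-to-midgap ratio by the worst ratio of consecutive Dirichlet eigenvalues of a single disk. Your equality-case bookkeeping (each contributing disk must contribute exactly its ground state, there are exactly $m$ eigenvalues below the gap, hence all $m$ disks contribute and share the radius $\rho$) is correct and in fact more explicit than what the paper writes. Where you genuinely diverge is in how the single-disk lemma $\nu_{k+1}/\nu_k \le \nu_2/\nu_1$, with equality only at $k=1$, gets established. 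The paper never touches Weyl's law: it notes that any spectral gap of a disk is contained in an interval $\bigl(j_{k,1}^2, j_{k+1,1}^2\bigr)$ between squared first zeros of consecutive Bessel functions (this is the interlacing $j_{k,1} < j_{k+1,1} < j_{k,2}$), and then invokes the monotonicity of $j_{k,1}/k$ in $k$, cited from \cite{Lewis_1977}, to get $j_{k+1,1}/j_{k,1} < (k+1)/k \le 3/2 < j_{1,1}/j_{0,1} \approx 1.593$ for all $k \ge 2$, leaving only the single numerical check $j_{2,1}/j_{1,1} \approx 1.34$ at $k=1$. That closes the lemma in two lines with one citation.

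Your proposed tail argument is the one real hole, and you have diagnosed it yourself: with only $N(\Lambda) = \tfrac14\Lambda + o(\Lambda)$ you learn that at most finitely many consecutive ratios can reach $\nu_2/\nu_1$, but not \emph{which} ones, so you cannot say where your finite numerical check is allowed to stop; an effective remainder bound for the disk is genuinely technical. A much cheaper way to quantify the tail is the parenthetical alternative you mention, spacing of Bessel zeros: consecutive positive zeros of $J_0$ differ by less than $\pi$, so if $(a,b)$ is a spectral gap of the unit disk then $(\sqrt a, \sqrt b)$ contains no zero of $J_0$, whence $\sqrt b - \sqrt a < \pi$ and
\[
\frac{b}{a} < \left(1 + \frac{\pi}{\sqrt a}\right)^2 < \frac{\nu_2}{\nu_1}
\qquad \text{as soon as } \sqrt a > \pi\Big/\left(\tfrac{j_{1,1}}{j_{0,1}} - 1\right) \approx 5.3,
\]
i.e.\ $a \gtrsim 28$. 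This reduces the lemma to inspecting the three gaps with left endpoint below that threshold ($j_{0,1}^2 \to j_{1,1}^2$, $j_{1,1}^2 \to j_{2,1}^2$, $j_{2,1}^2 \to j_{0,2}^2$), where equality occurs only at the first. Either this spacing argument or the paper's monotonicity citation must be inserted for your proof to be complete; as written, the lemma is asserted rather than proved.
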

\begin{proof}
We denote the radii of
$m$ disks by $R_{1}$, $R_{2}$, $\ldots$ $R_{m}$ and without loss
of generality we can assume that $R_{1}\leq R_{2}\leq\cdots\leq R_{m}$.
The eigenvalues are then given by 
\[
\left\{ \frac{j_{k,\ell}^{2}}{R_{m}},\ \frac{j_{k,\ell}^{2}}{R_{m-1}},\ \ldots\ ,\ \frac{j_{k,\ell}^{2}}{R_{1}}\right\} ,\qquad k\in\mathbb{N}_{+}, \ \ell\in\mathbb{N},
\]
where $j_{k,\ell}$ is the $k$-th positive zero of the $\ell$-th
Bessel function.

If the $m$-th gap in the spectrum is between the eigenvalues  $\frac{j_{0,1}^{2}}{R_{1}}$ and $\frac{j_{1,1}^{2}}{R_{m}}$, then the gap-to-midgap ratio is
\begin{equation}
2\frac{\frac{j_{1,1}^{2}}{R_{m}}-\frac{j_{0,1}^{2}}{R_{1}}}{\frac{j_{1,1}^{2}}{R_{m}}+\frac{j_{0,1}^{2}}{R_{1}}}=2\frac{\frac{j_{1,1}^{2}R_{1}}{j_{0,1}^{2}R_{m}}-1}{\frac{j_{1,1}^{2}R_{1}}{j_{0,1}^{2}R_{m}}+1}=f\left(\frac{j_{1,1}^{2}R_{1}}{j_{0,1}^{2}R_{m}}\right).\label{e:val2}
\end{equation}
where $f(\alpha)=2\frac{\alpha-1}{\alpha+1}$. Thus (\ref{e:val2})
is maximized when $R_{1}=R_{m}$, which implies all radii are of the
same size and $G_{m}=f(\frac{j_{1,1}^{2}}{j_{0,1}^{2}})\approx0.8697$. 

If not, the $m$-th gap must lie in one of the intervals
\[
\left(\frac{j_{k,1}^{2}}{R_{m}},\ \frac{j_{k+1,1}^{2}}{R_{m}}\right),\qquad k=0,1,...\ ,m.
\]
It is known that $j_{k,\ell}/k$ decreases as $k$ increases for $0<\ell<\infty$ \cite{Lewis_1977}. 
We then have 
\[
\frac{j_{k+1,1}}{j_{k,1}}<\frac{k+1}{k},\qquad k=0,1,...\ ,m.
\]
When $k\ge2$, $\frac{j_{k+1,1}}{j_{k,1}}<1.5<\frac{j_{1,1}}{j_{0,1}} \approx \frac{3.8317}{2.4048}$, so the optimal gap cannot be in any of these intervals. 
The optimal gap also can't be in the $k=1$ interval, since $\frac{j_{2,1}}{j_{1,1}}\approx\frac{5.1356}{3.8317}\approx1.3403<\frac{j_{1,1}}{j_{0,1}}$. 
It follows that the optimal gap is in the interval  $\left(\frac{j_{0,1}^{2}}{R_{m}},\ \frac{j_{1,1}^{2}}{R_{m}}\right)$. Since the $m$-th gap must lie above $m$ eigenvalues, the only possibility is that the optimal gap is in the interval $\left(\frac{j_{0,1}^{2}}{R_{1}},\ \frac{j_{1,1}^{2}}{R_{m}}\right)$, as considered above. 
\end{proof}

From Proposition \ref{prop:OptDiscMg1} and the preceding discussion, we have the following corollary.

\begin{cor} \label{prop:OptDiscMg2}
Let $m\geq 1$ be fixed. Assume Assumption  \ref{assum:bangbang2} holds. For $V_+ = \infty$, 
any $\Omega_- $ such that the potential of the form \eqref{e:BangBang} maximizing $G_m[V]$ over $\ad(\Gamma,V_+)$
is the disjoint union of $m$ equal-radius disks and the maximal value satisfies
\[G_{m,V_+,\Gamma}^\star \to g. \] 
Furthermore, for any finite $V_+ >0$, we have that $G_{m,V_+,\Gamma}^\star \leq  g $. 
\end{cor}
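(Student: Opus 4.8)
The plan is to combine Proposition~\ref{prop:OptDiscMg1} with the high-contrast reduction already developed just before the statement, and then to obtain the finite-$V_+$ bound from monotonicity of the optimal value in $V_+$. The only genuinely new point is to justify passing the limit $V_+\to\infty$ through the extrema over the Brillouin zone that define the band edges $\alpha_m$ and $\beta_m$.

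First I would fix a bang-bang potential of the form \eqref{e:BangBang} with $\Omega_-\in\Theta_m$, as permitted by Assumption~\ref{assum:bangbang2}, and pass to the limit $V_+\to\infty$ in the band edges. Since the potentials $V_+\mathbf 1_{\Omega_+}$ increase pointwise with $V_+$, the semidefinite ordering (as used in Proposition~\ref{prop:minftyD2}) shows that each band $E_j(\Omega_-,V_+,k)$ is nondecreasing in $V_+$; by Theorem~\ref{prop:ConvVinfty} it converges, for every fixed $k\in\mathcal B$, to the ($k$-independent) Dirichlet eigenvalue $\lambda_j(\Omega_-)$. Because each $E_j(\Omega_-,V_+,\cdot)$ is continuous on the compact set $\mathcal B$ and converges monotonically to a constant, Dini's theorem upgrades this to uniform convergence on $\mathcal B$. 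Hence $\alpha_m=\max_{k}E_m\to\lambda_m(\Omega_-)$ and $\beta_m=\min_k E_{m+1}\to\lambda_{m+1}(\Omega_-)$, and by continuity of $f(\alpha)=2\frac{\alpha-1}{\alpha+1}$,
\[
G_m\big[V_+\mathbf 1_{\Omega_+}\big]\ \longrightarrow\ f\!\left(\frac{\lambda_{m+1}(\Omega_-)}{\lambda_m(\Omega_-)}\right),\qquad V_+\to\infty.
\]
Under Assumption~\ref{assum:bangbang2}, maximizing $G_m$ in the high-contrast limit is therefore exactly the shape problem \eqref{e:ShapeOpt2} over $\Theta_m$.

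Second I would invoke Proposition~\ref{prop:OptDiscMg1}, which identifies the unique maximizer of \eqref{e:ShapeOpt2} as a disjoint union of $m$ equal-radius disks with optimal value $g=f(j_{1,1}^2/j_{0,1}^2)$. Since $m$ sufficiently small equal disks can always be placed disjointly and compactly inside the torus $T$, this configuration lies in $\Theta_m$ and the supremum is attained; the scale invariance of the eigenvalue ratio means the common radius is immaterial to the value, in analogy with Lemma~\ref{l:V+toinfty}. This yields both that any maximizing $\Omega_-$ is $m$ equal-radius disks and that $G^\star_{m,V_+,\Gamma}\to g$.

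Finally, for the finite-$V_+$ bound I would use that the admissible sets are nested, $\ad(\Gamma,V_+)\subseteq\ad(\Gamma,V_+')$ whenever $V_+\le V_+'$, so $V_+\mapsto G^\star_{m,V_+,\Gamma}$ is nondecreasing and therefore bounded above by its limit $g$; hence $G^\star_{m,V_+,\Gamma}\le g$ for every finite $V_+$. I expect the Dini/uniform-convergence step to be the main obstacle, since it is where the $k$-dependence of the bands—which has no analogue in the one-dimensional argument—must be controlled; the monotonicity in $V_+$ is precisely what makes it tractable, so I would isolate and verify that monotonicity carefully.
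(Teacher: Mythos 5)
Your proposal is correct and takes essentially the same route as the paper: reduce the $V_+=\infty$ problem to the shape optimization problem \eqref{e:ShapeOpt2} over $\Theta_m$ via Theorem \ref{prop:ConvVinfty}, invoke Proposition \ref{prop:OptDiscMg1} to identify the maximizer as $m$ equal-radius disks with value $g$, and obtain the finite-$V_+$ bound from monotonicity of $G^\star_{m,V_+,\Gamma}$ in $V_+$ coming from the nested admissible sets. Your Dini's-theorem step only makes explicit the uniformity in $k$ of the band-edge convergence that the paper's ``preceding discussion'' leaves implicit, so it is a refinement of the same argument rather than a different approach.
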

Note that in Corollary \ref{prop:OptDiscMg2},  the optimal value $G_{m,V_+,\Gamma}^\star$ does not depend on $m$ or $\Gamma$.

\begin{figure} [t!]
\begin{center}
\includegraphics[width=.40\textwidth]{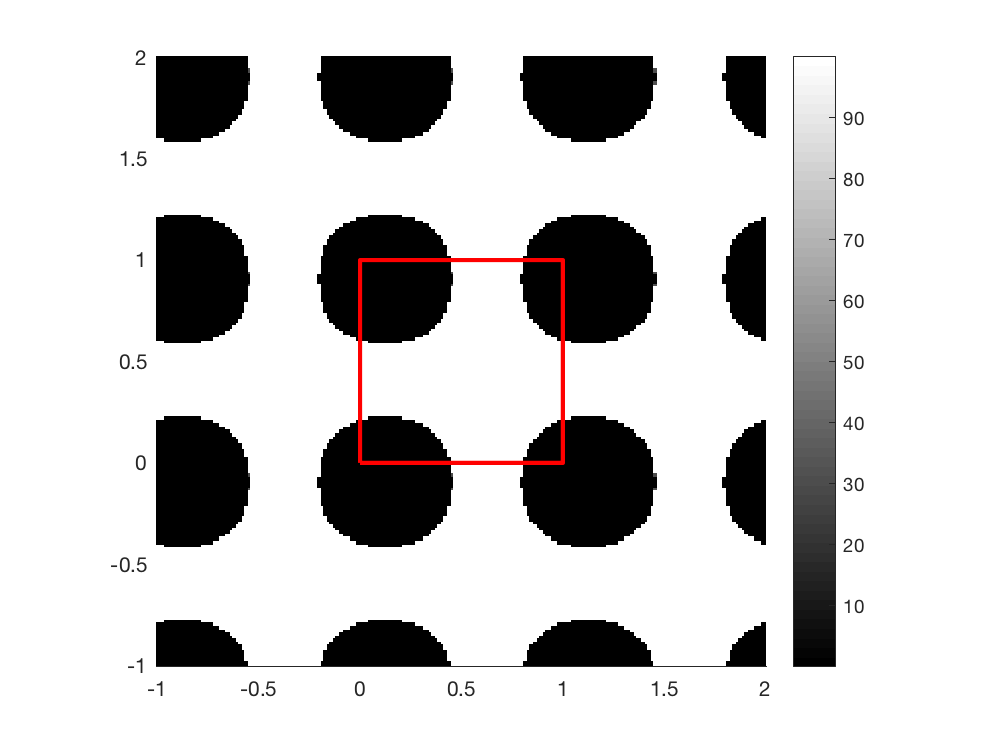}
\includegraphics[width=.40\textwidth]{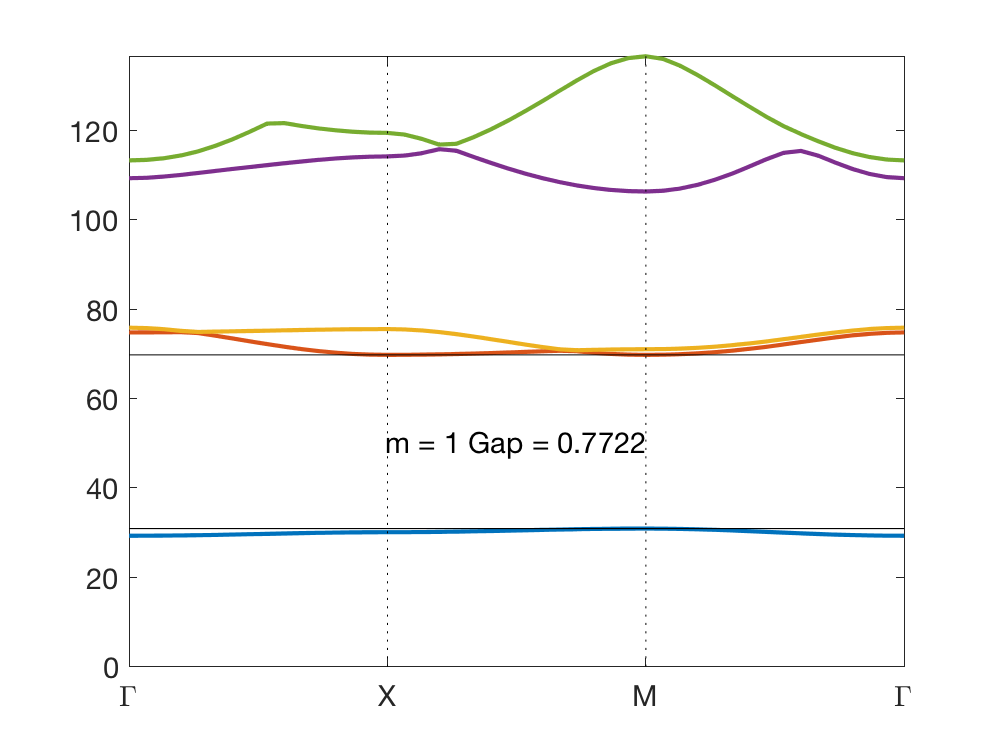}
\includegraphics[width=.40\textwidth]{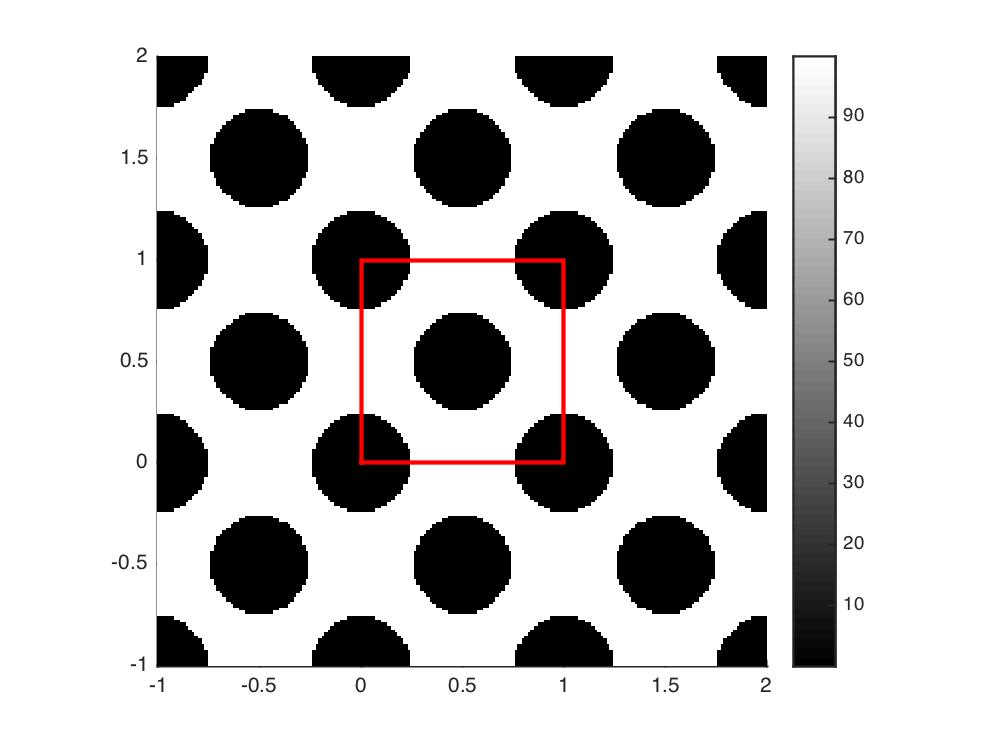}
\includegraphics[width=.40\textwidth]{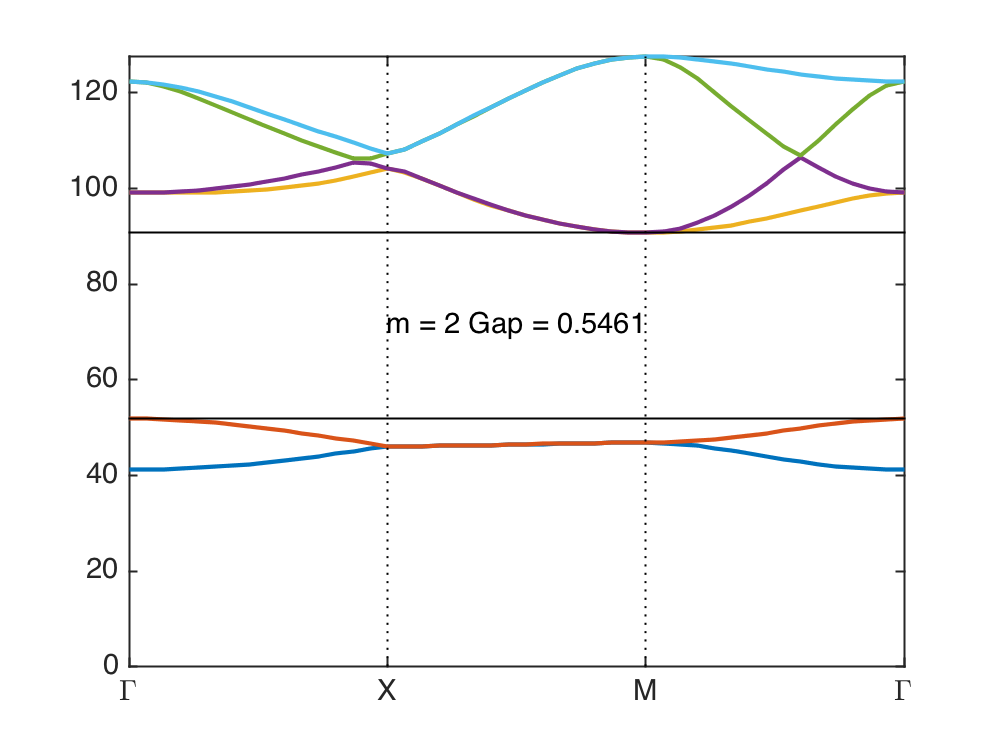}
\includegraphics[width=.40\textwidth]{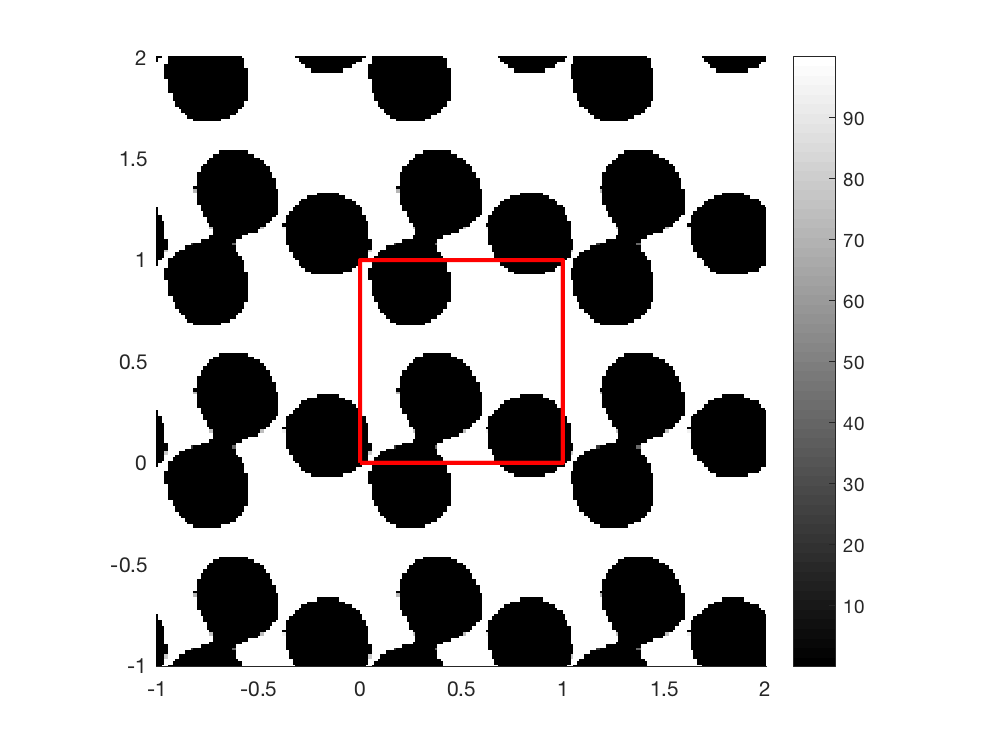}
\includegraphics[width=.40\textwidth]{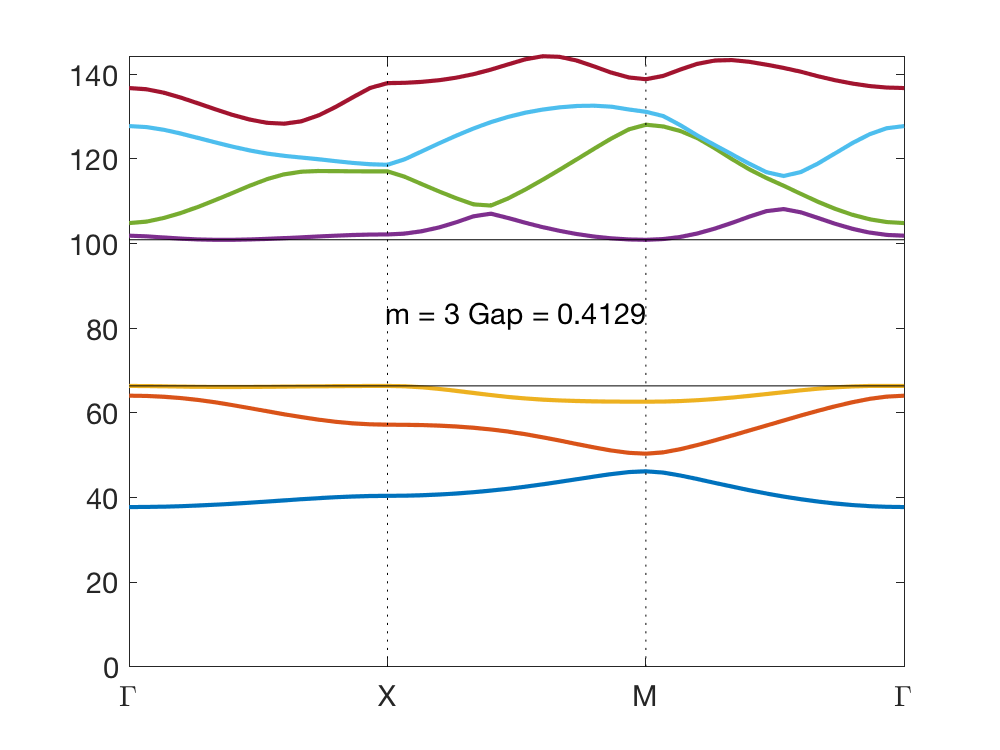}
\includegraphics[width=.40\textwidth]{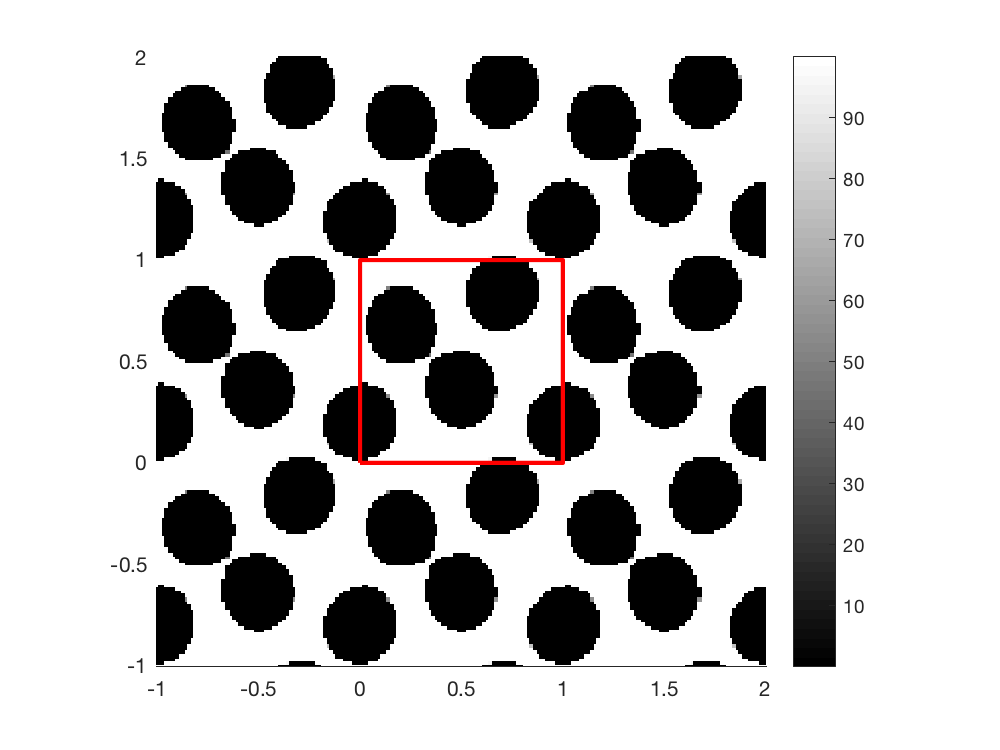}
\includegraphics[width=.40\textwidth]{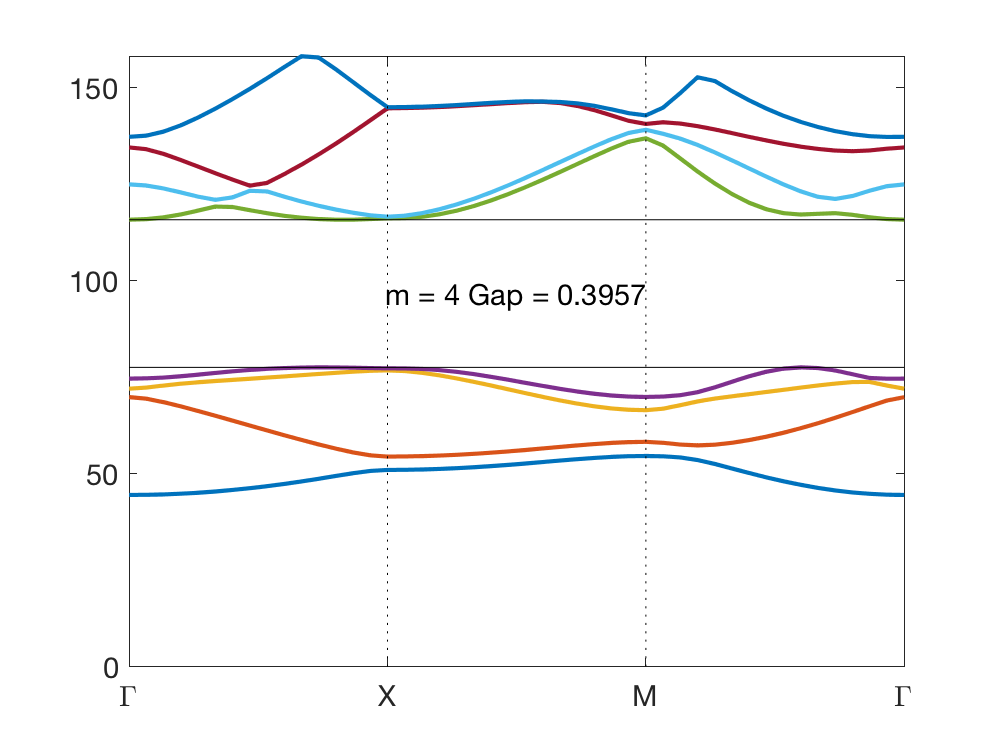}
\caption{For the square lattice and $V_+ = 100$, 
{\bf (left)} the  potential  maximizing $G_m$  and 
{\bf (right)} corresponding dispersion relation  over the irreducible Brillouin zone are plotted for $m=1,2,3,4$. 
}
\label{f:2DsqA}
\end{center}
\end{figure}

\begin{figure} [t!]
\begin{center}
\includegraphics[width=.40\textwidth]{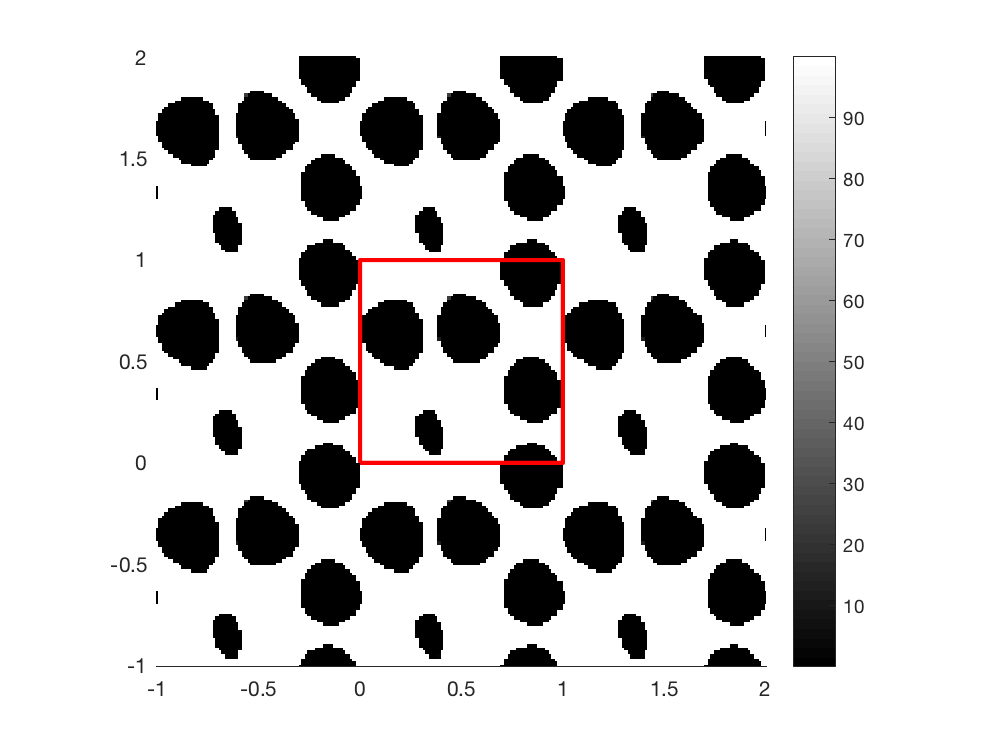}
\includegraphics[width=.40\textwidth]{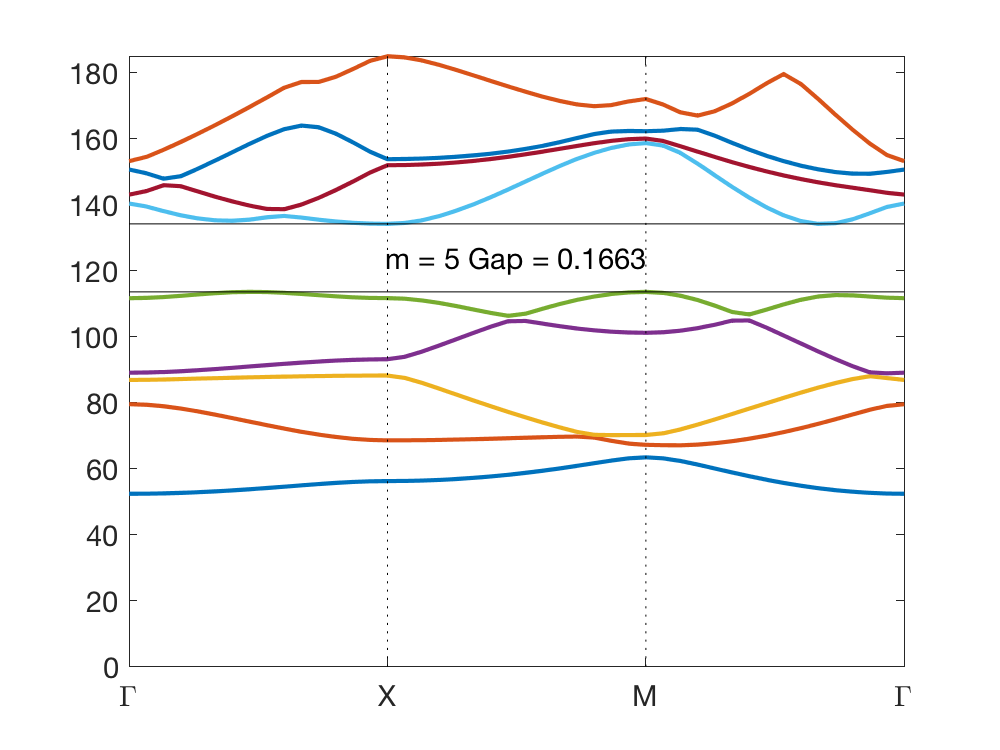}
\includegraphics[width=.40\textwidth]{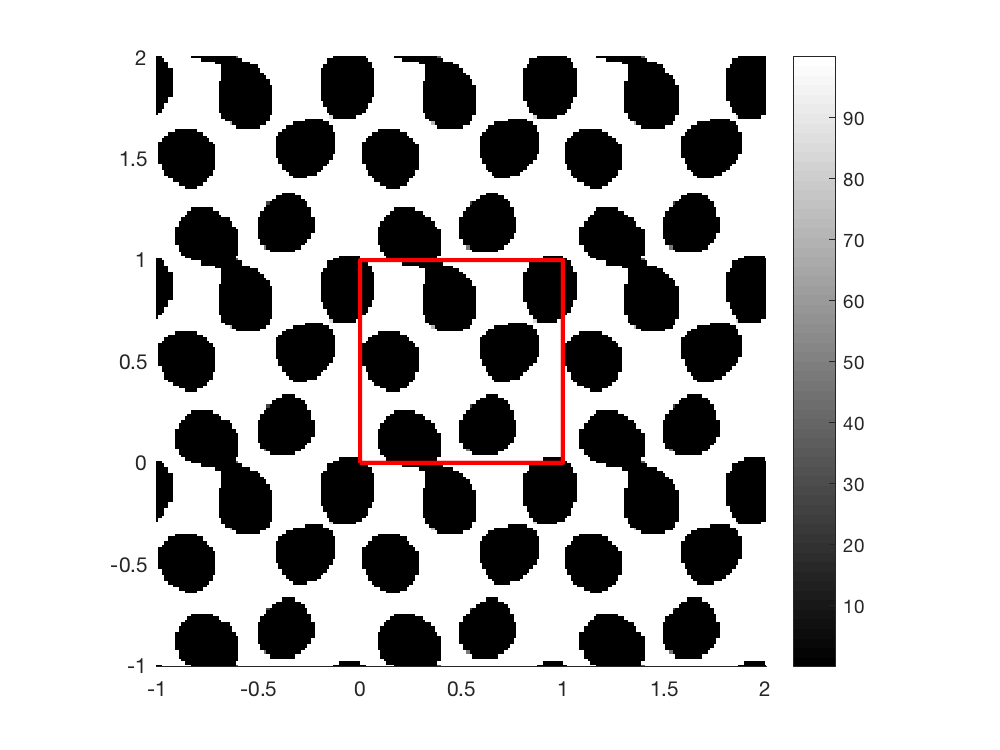}
\includegraphics[width=.40\textwidth]{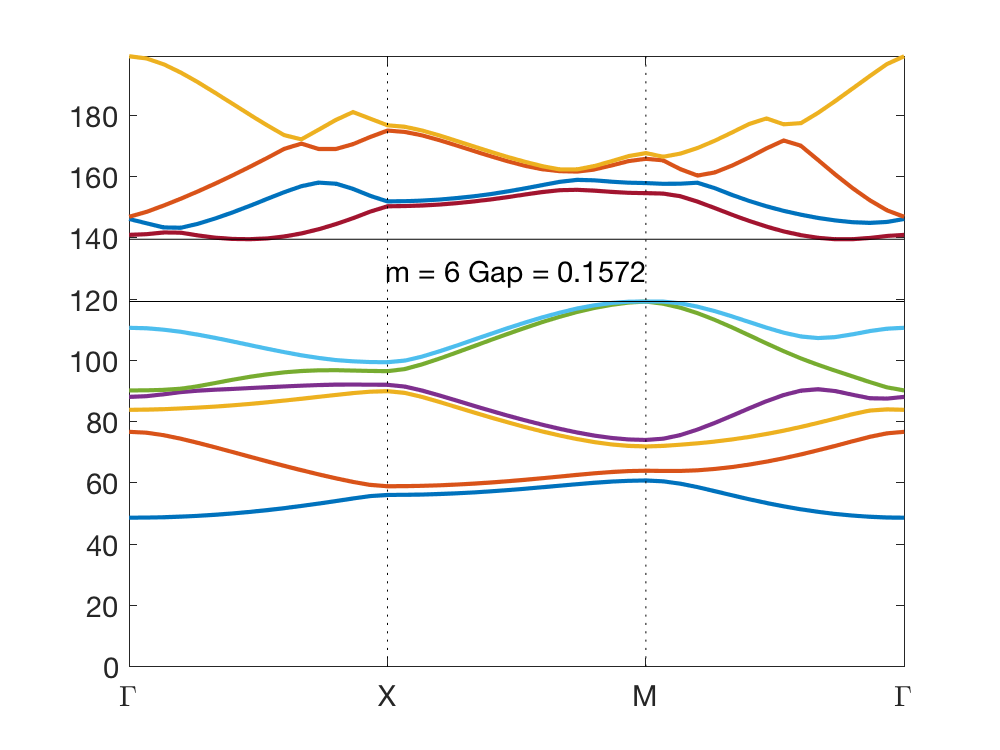}
\includegraphics[width=.40\textwidth]{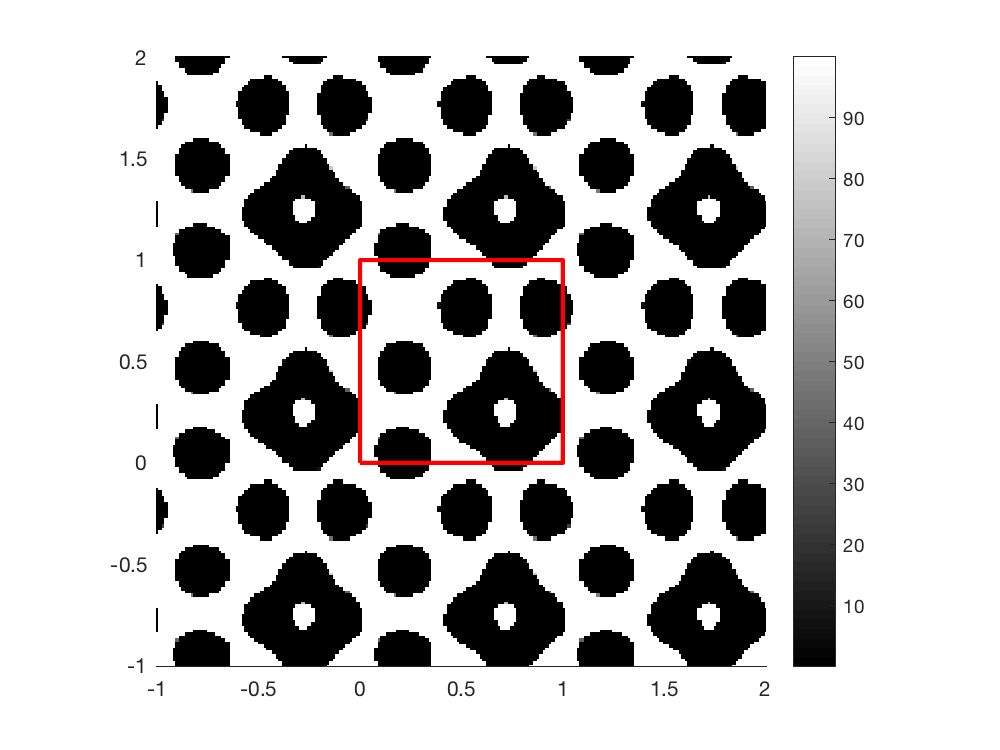}
\includegraphics[width=.40\textwidth]{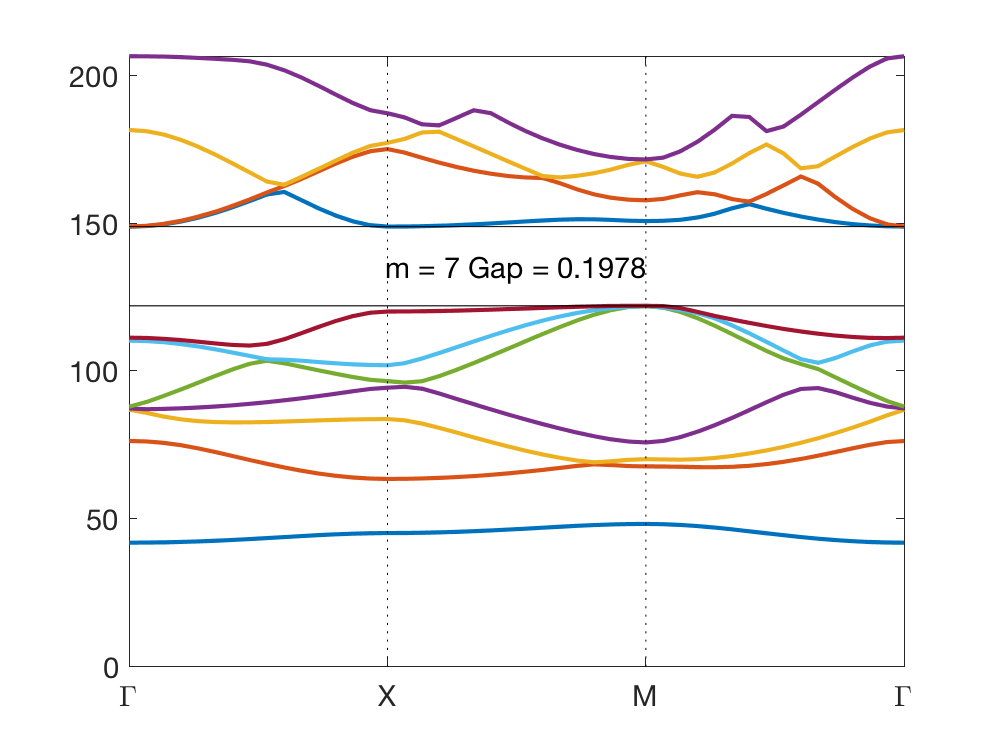}
\includegraphics[width=.40\textwidth]{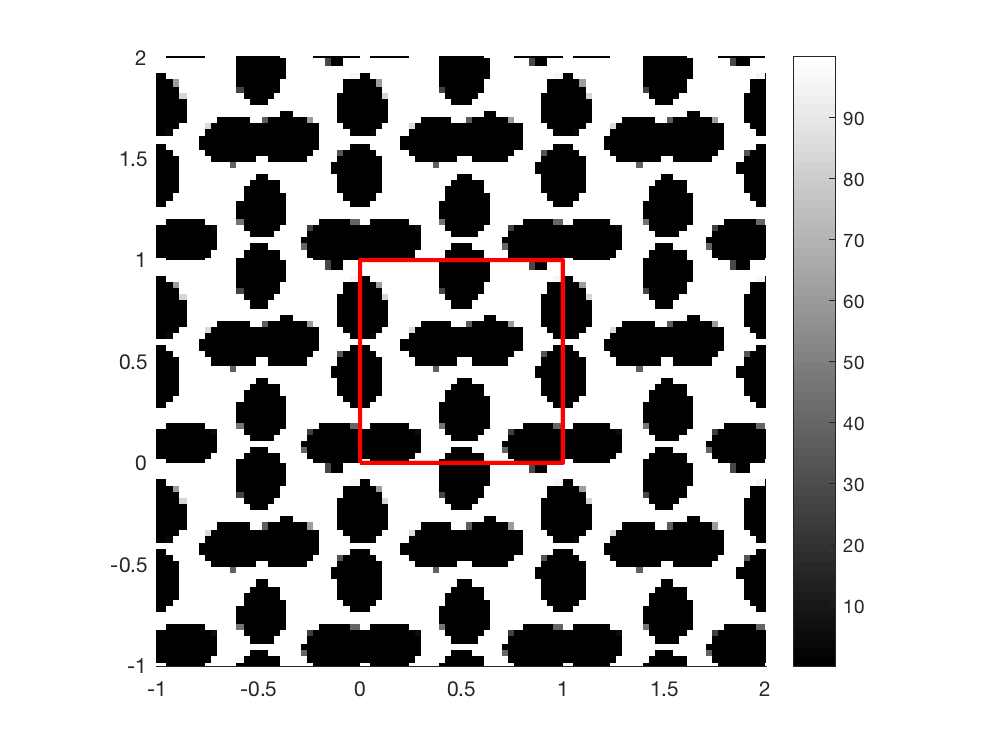}
\includegraphics[width=.40\textwidth]{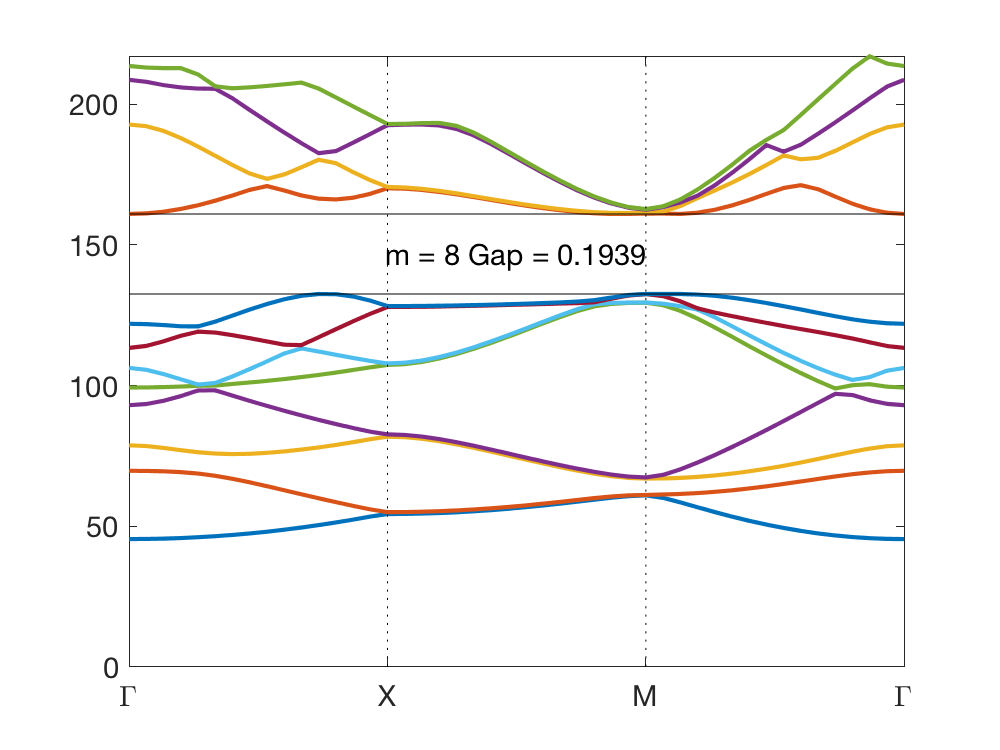}
\caption{For the square lattice and $V_+ = 100$, 
{\bf (left)} the  potential  maximizing $G_m$  and 
{\bf (right)} corresponding dispersion relation over the irreducible Brillouin zone are plotted for  $m=5,6,7,8$.
}
\label{f:2DsqB}
\end{center}
\end{figure}

\begin{figure} [t!]
\begin{center}
\includegraphics[width=.40\textwidth]{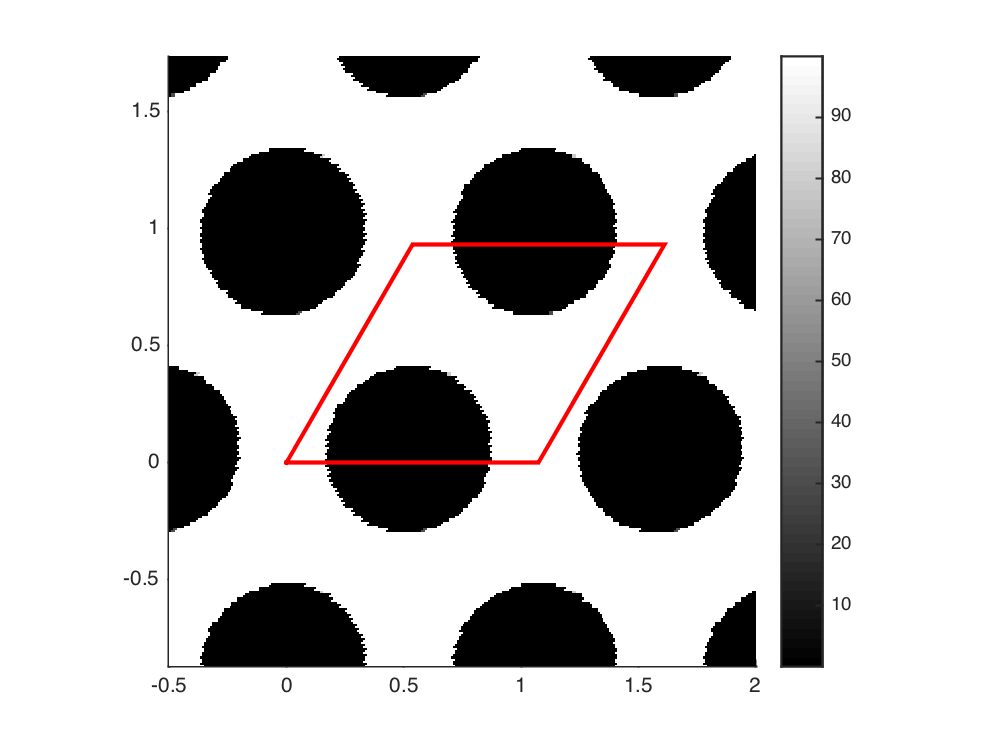}
\includegraphics[width=.40\textwidth]{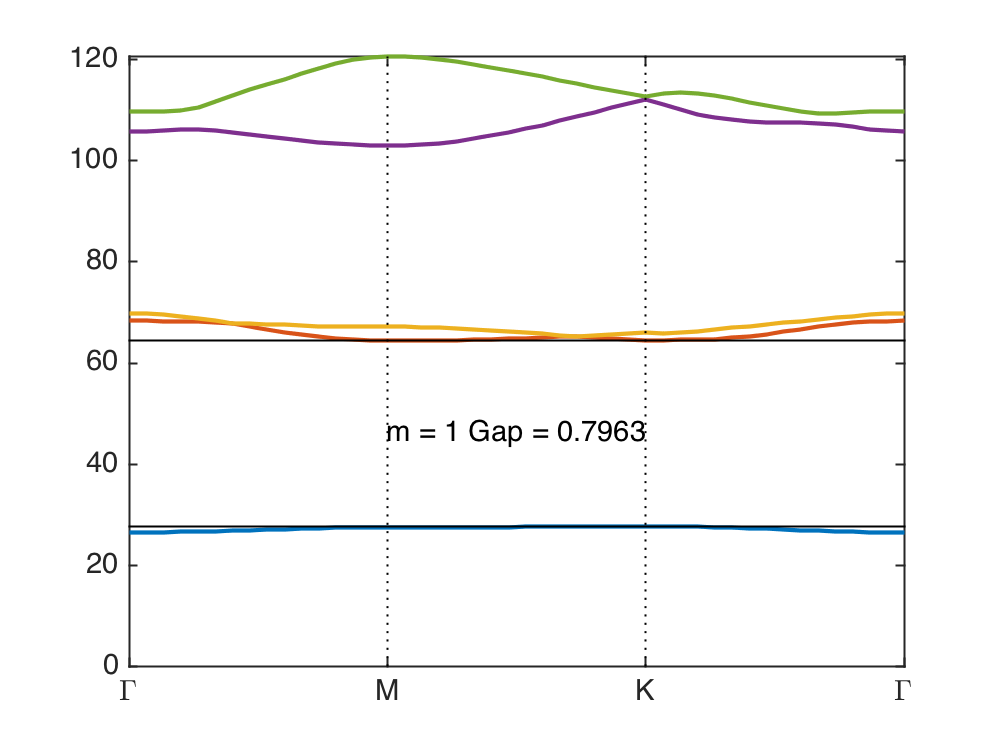}
\includegraphics[width=.40\textwidth]{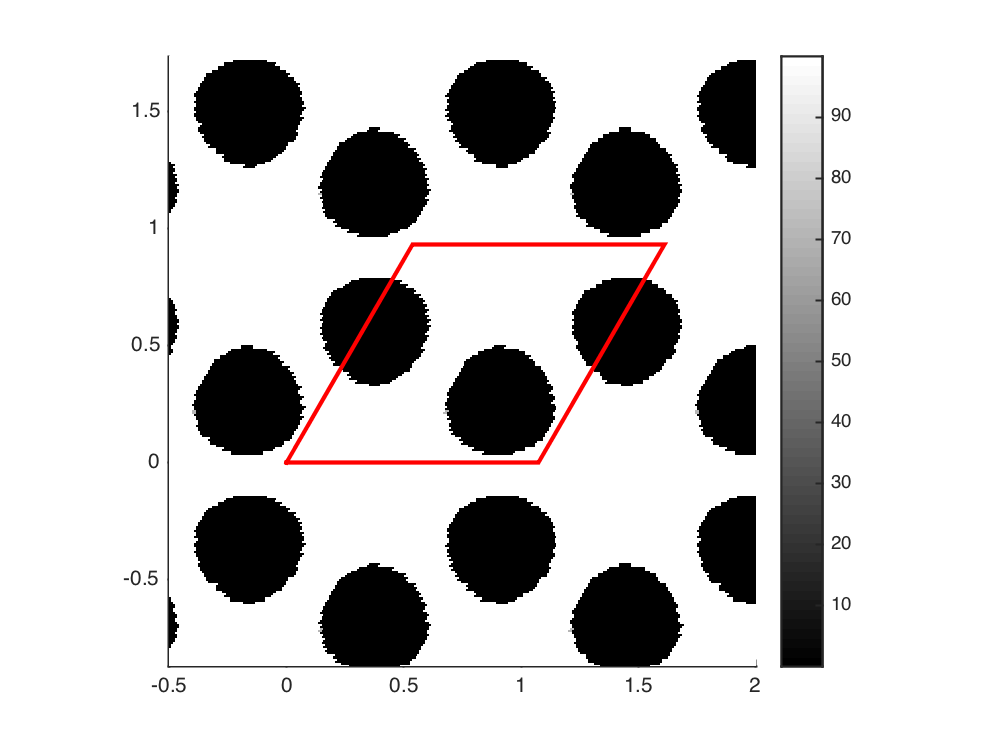}
\includegraphics[width=.40\textwidth]{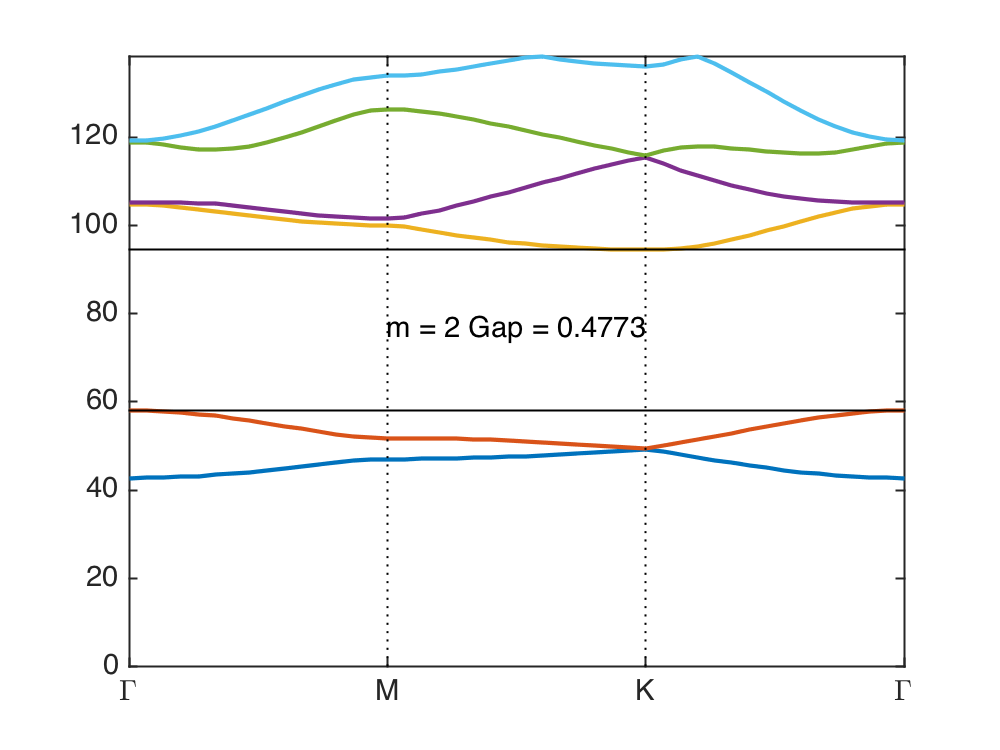}
\includegraphics[width=.40\textwidth]{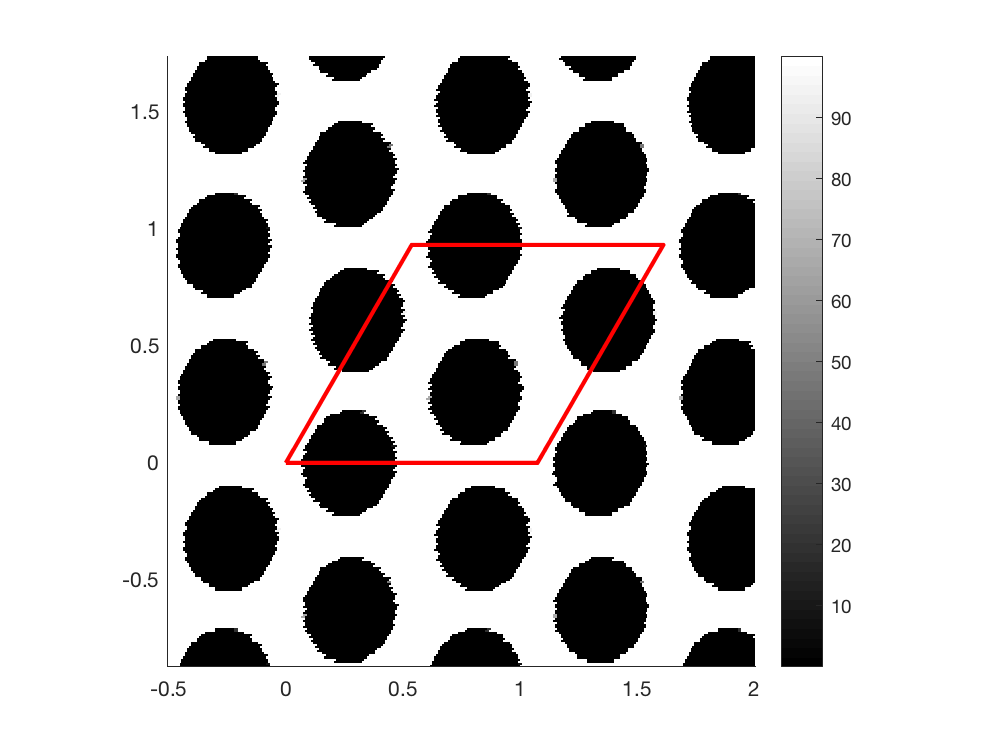}
\includegraphics[width=.40\textwidth]{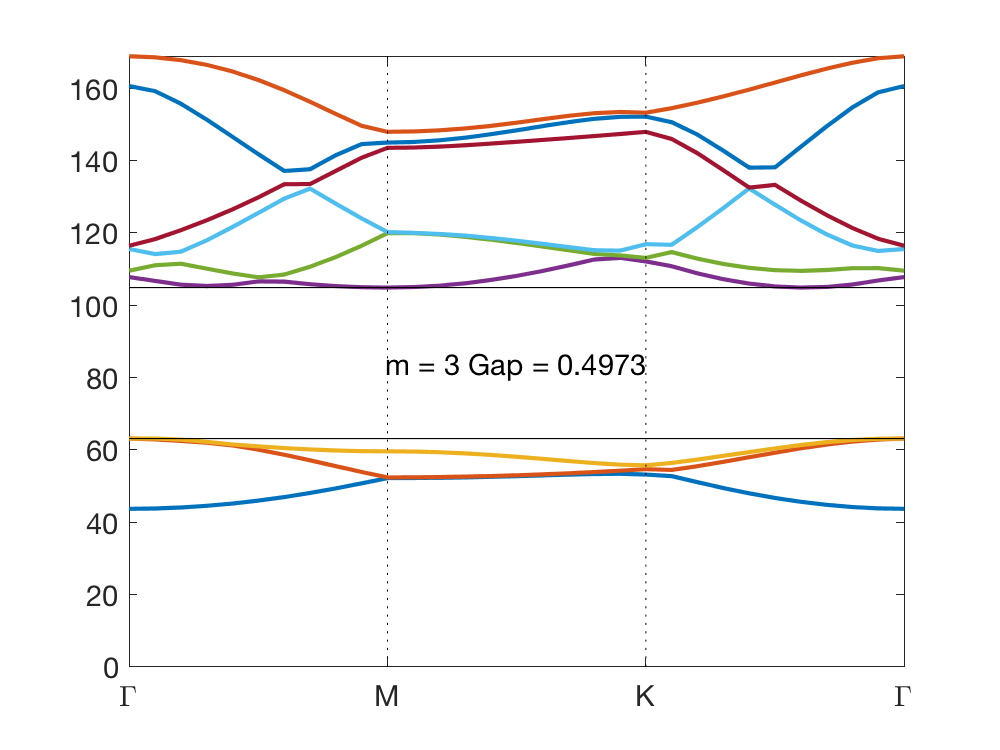}
\includegraphics[width=.40\textwidth]{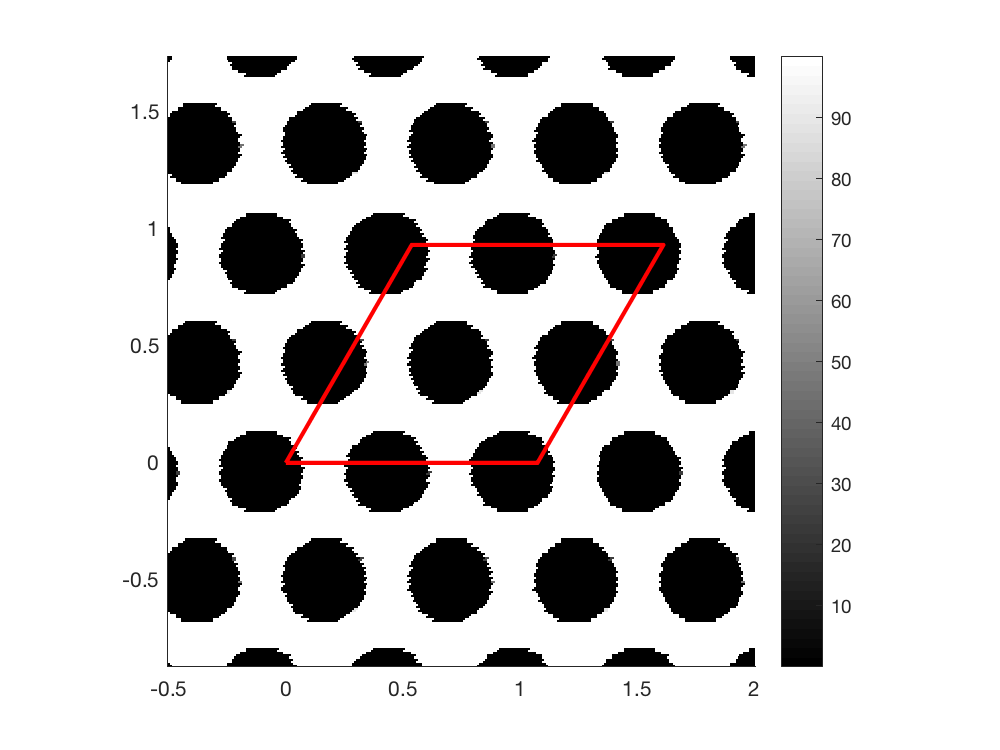}
\includegraphics[width=.40\textwidth]{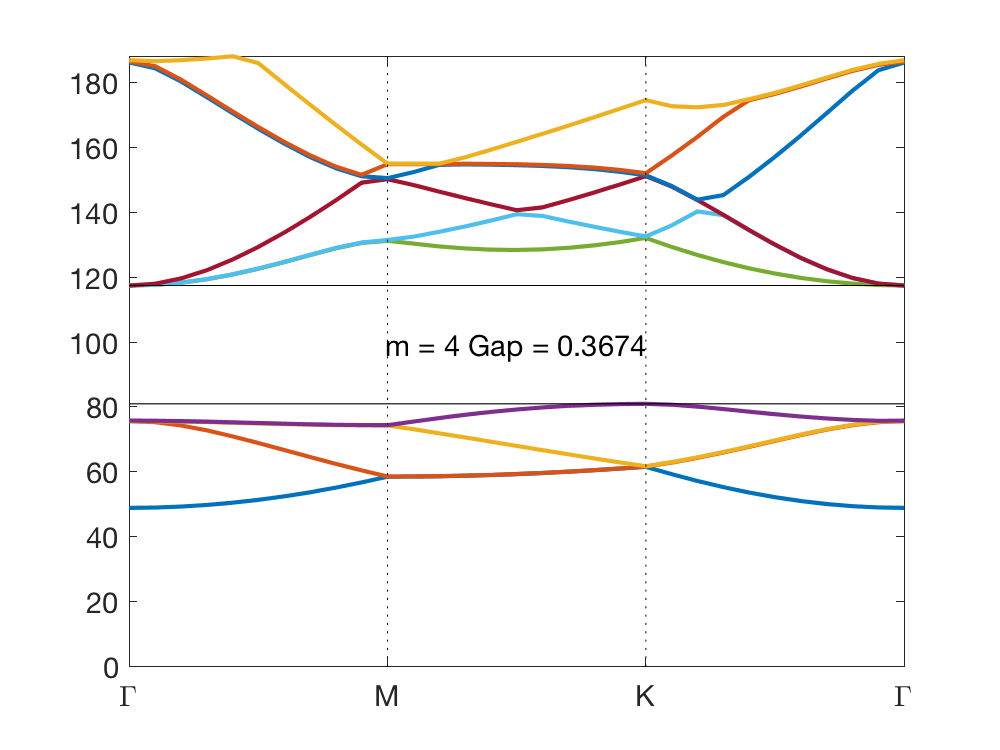}

\caption{For the triangular lattice and $V_+ = 100$, 
{\bf (left)} the  potential  maximizing $G_m$  and 
{\bf (right)} corresponding dispersion relation over the irreducible Brillouin zone are plotted for  $m=1,2,3,4$. }
\label{f:2DtriA}
\end{center}
\end{figure}

\begin{figure} [t!]
\begin{center}
\includegraphics[width=.40\textwidth]{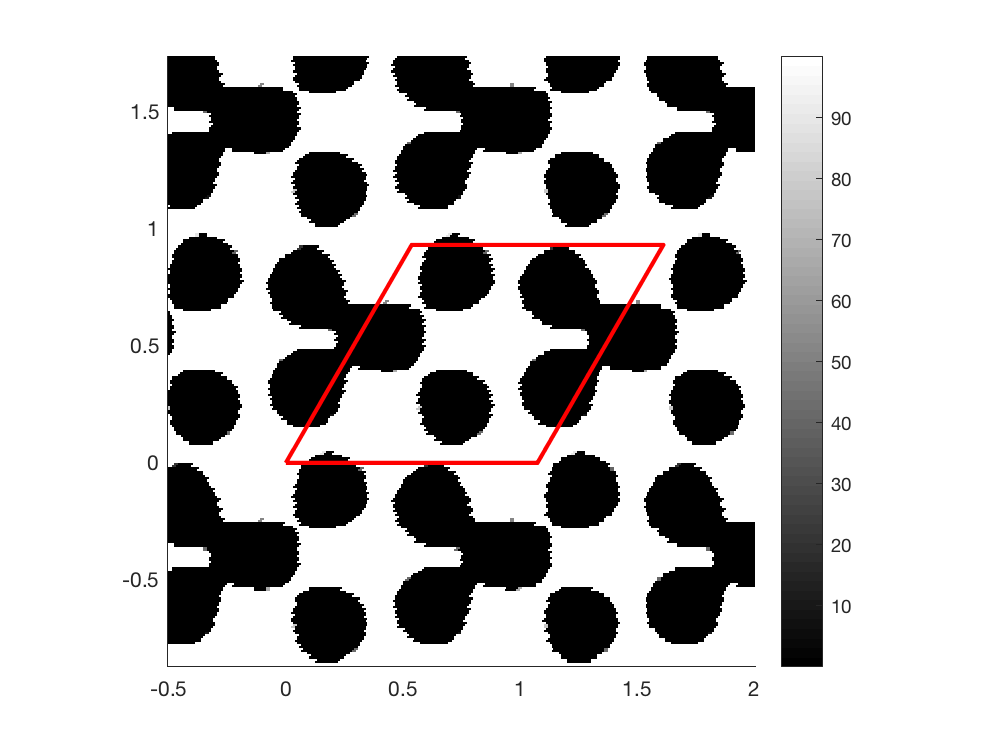}
\includegraphics[width=.40\textwidth]{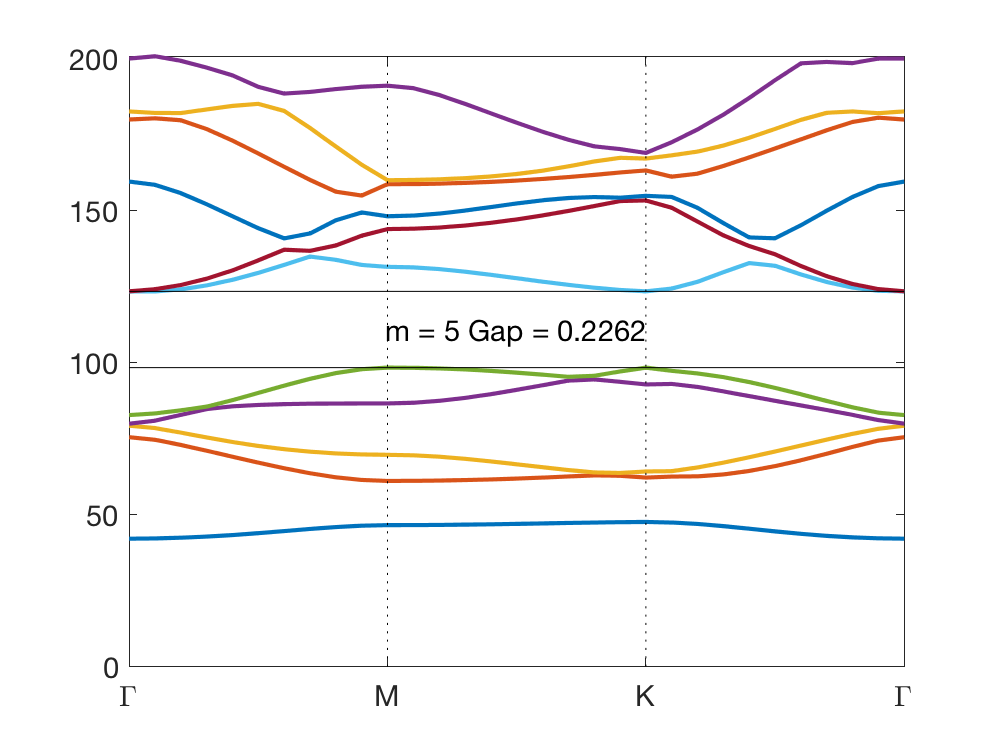}
\includegraphics[width=.40\textwidth]{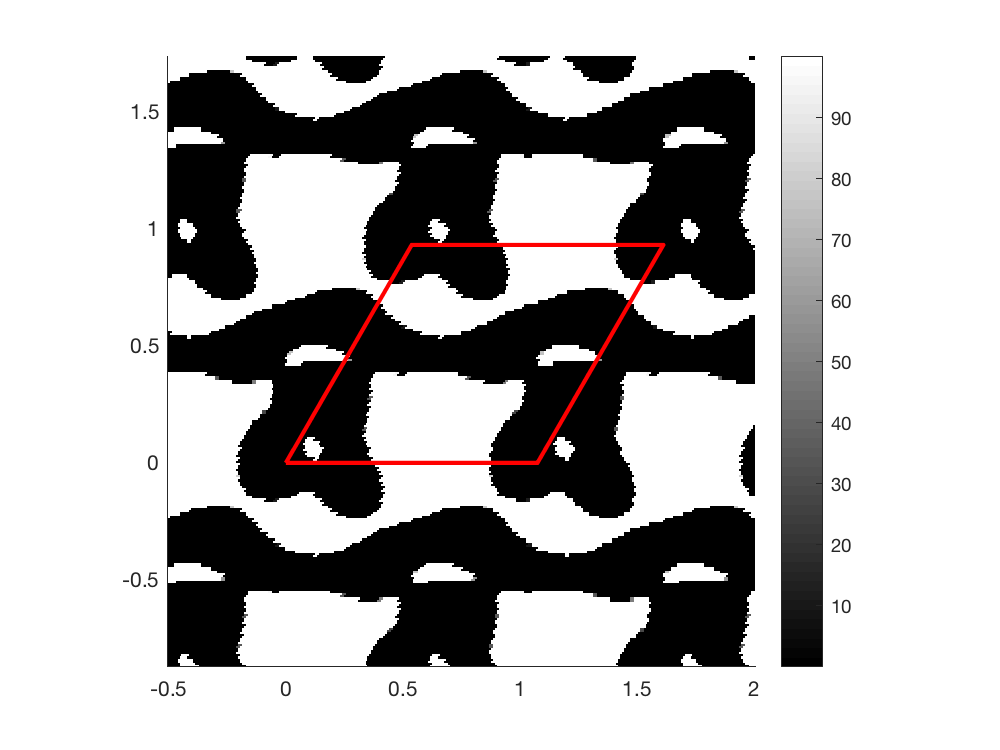}
\includegraphics[width=.40\textwidth]{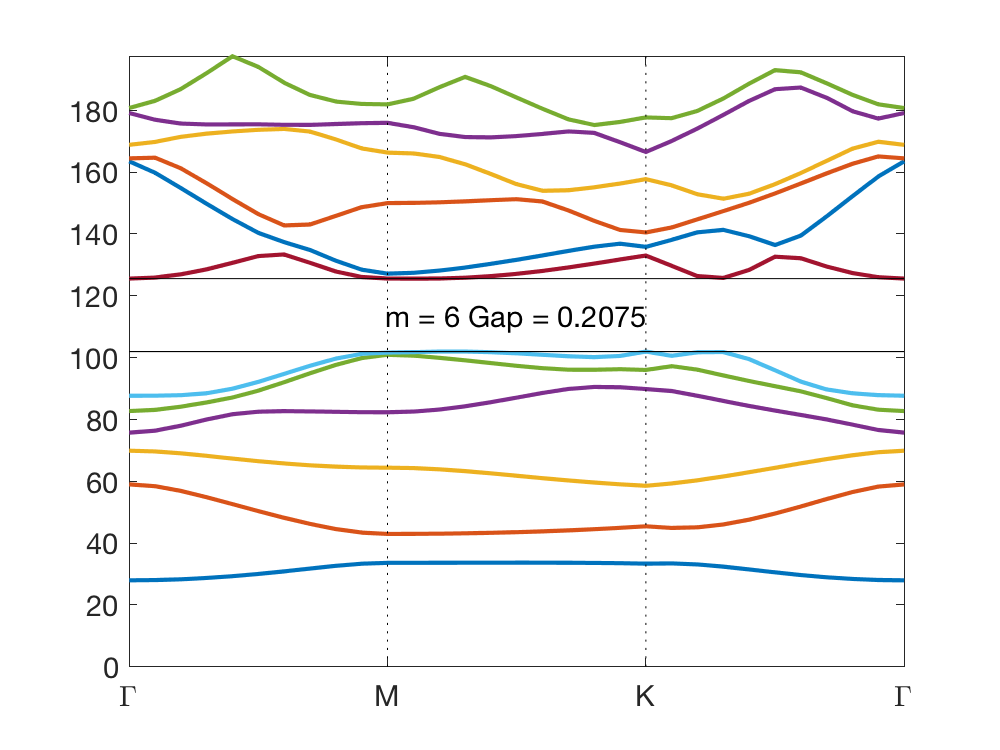}
\includegraphics[width=.40\textwidth]{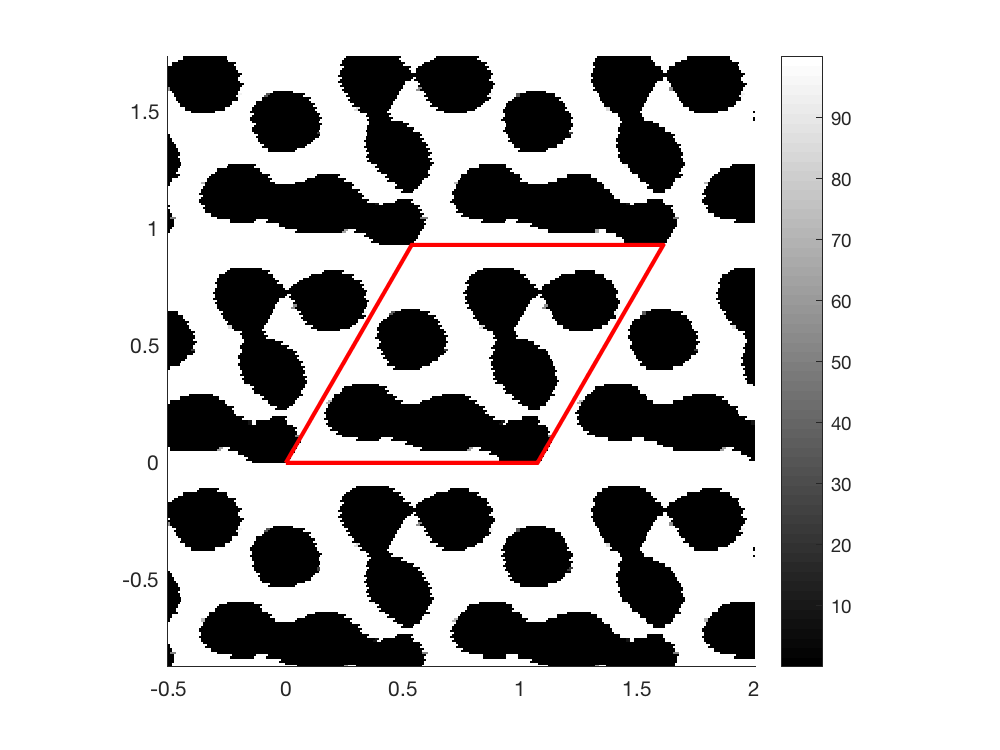}
\includegraphics[width=.40\textwidth]{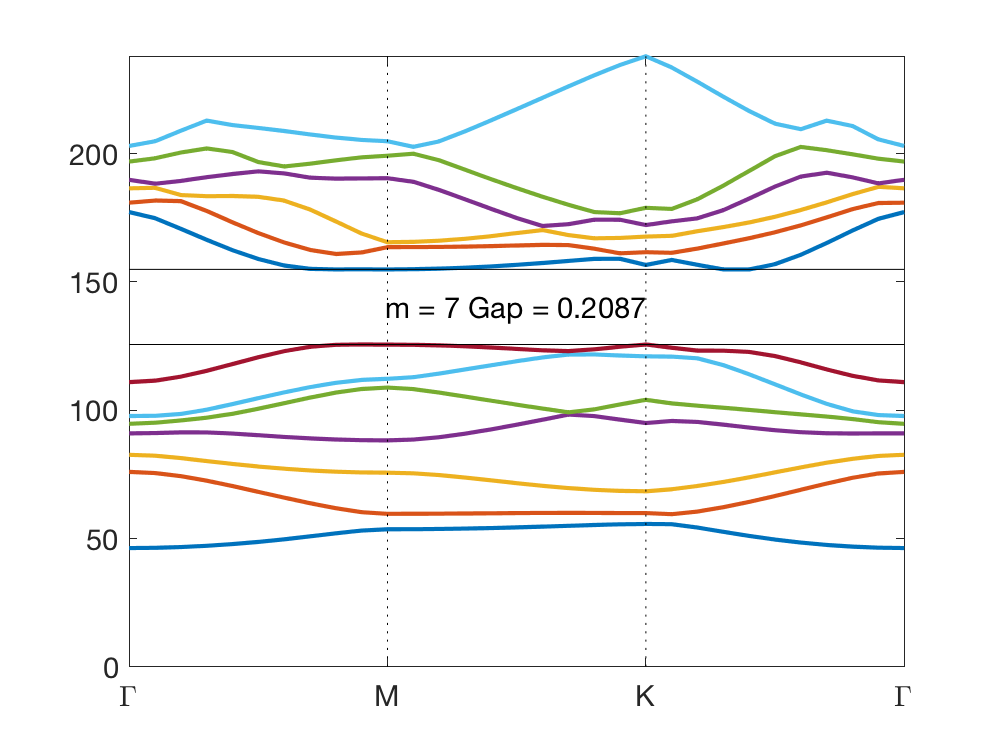}
\includegraphics[width=.40\textwidth]{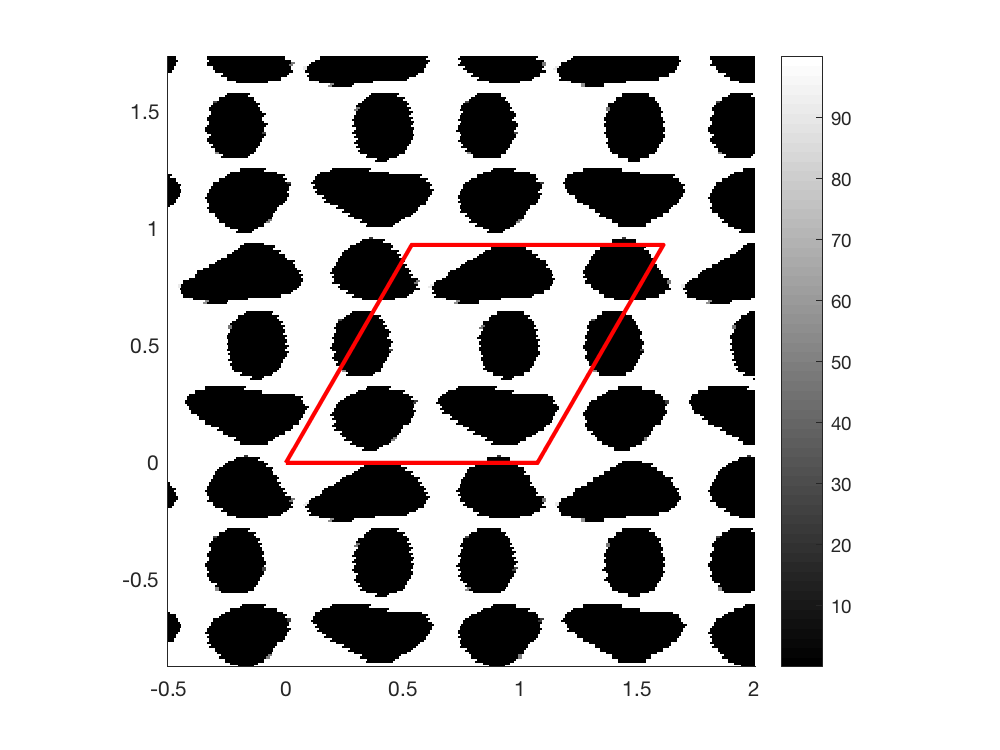}
\includegraphics[width=.40\textwidth]{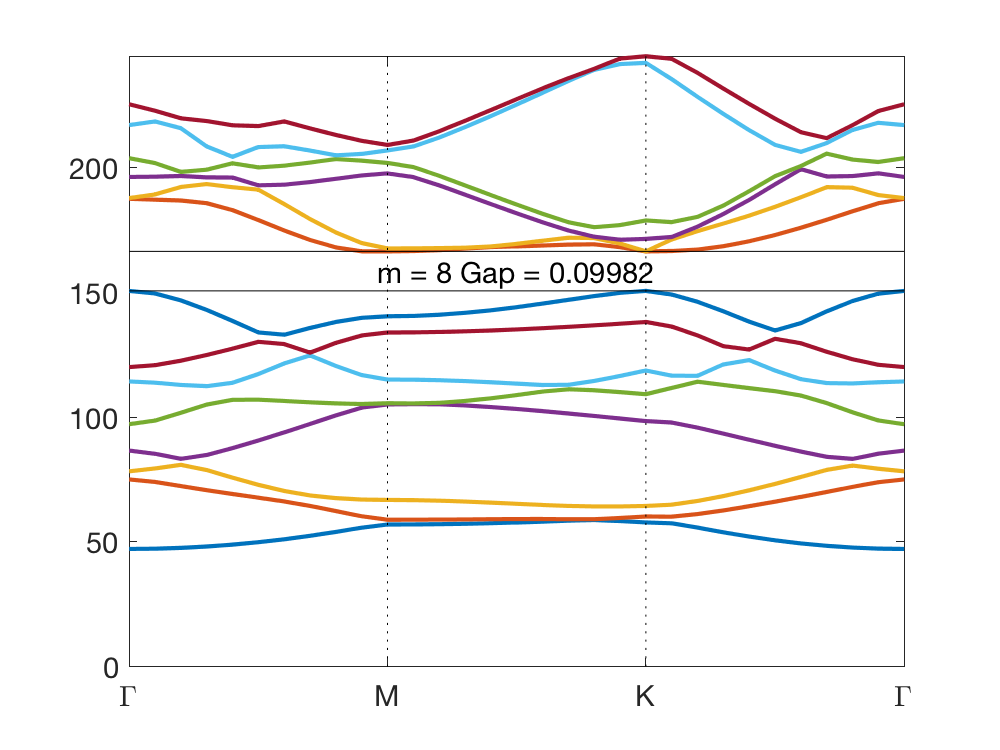}

\caption{For the triangular lattice and $V_+ = 100$, 
{\bf (left)} the  potential  maximizing $G_m$  and 
{\bf (right)} corresponding dispersion relation over the irreducible Brillouin zone are plotted for  $m=5,6,7,8$. }
\label{f:2DtriB}
\end{center}
\end{figure}

\begin{figure} 
\begin{center}
\begin{minipage}{.1\textwidth} \vspace{-5cm} $V_+ = 40$ \end{minipage}
\includegraphics[width=.40\textwidth]{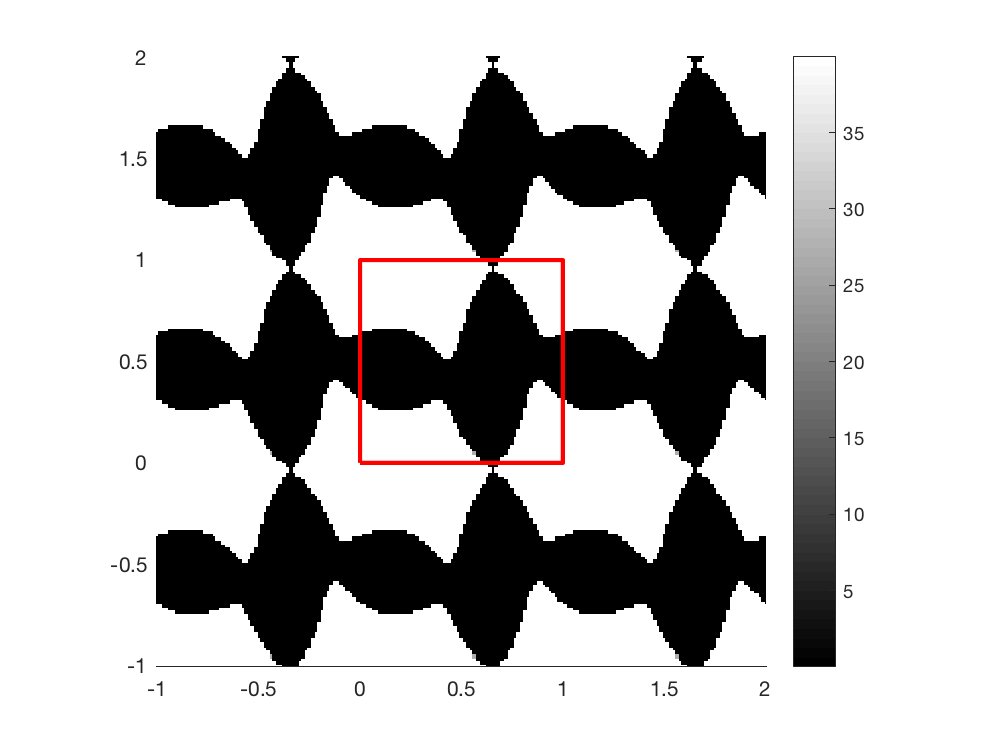}
\includegraphics[width=.40\textwidth]{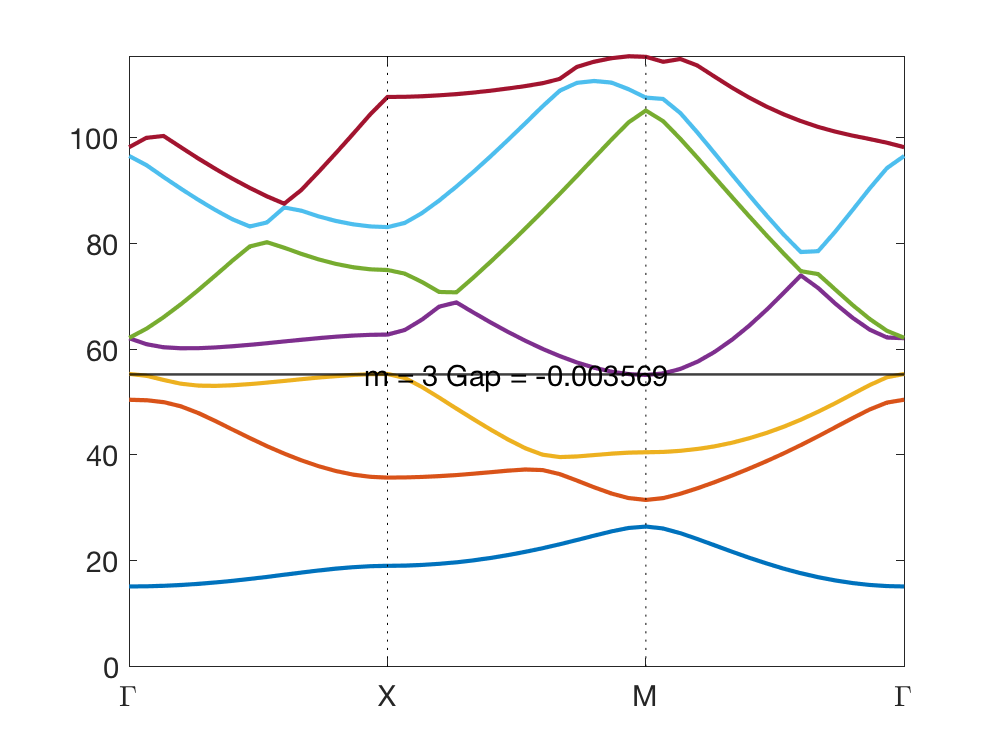}\\
\begin{minipage}{.1\textwidth} \vspace{-5cm} $V_+ = 60$ \end{minipage}
\includegraphics[width=.40\textwidth]{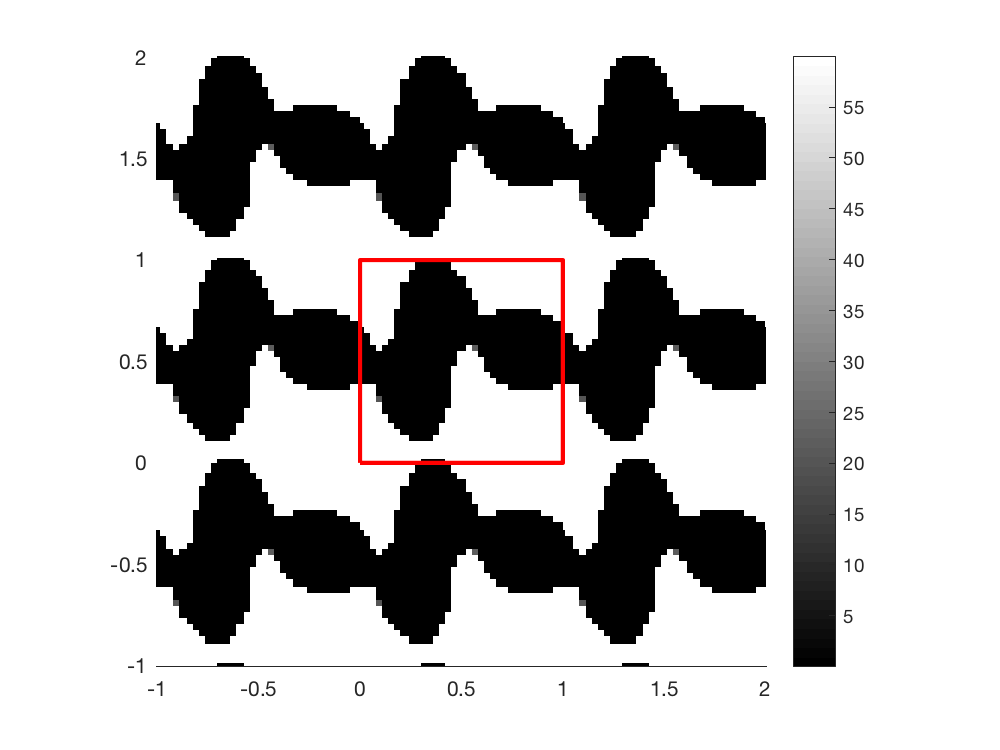}
\includegraphics[width=.40\textwidth]{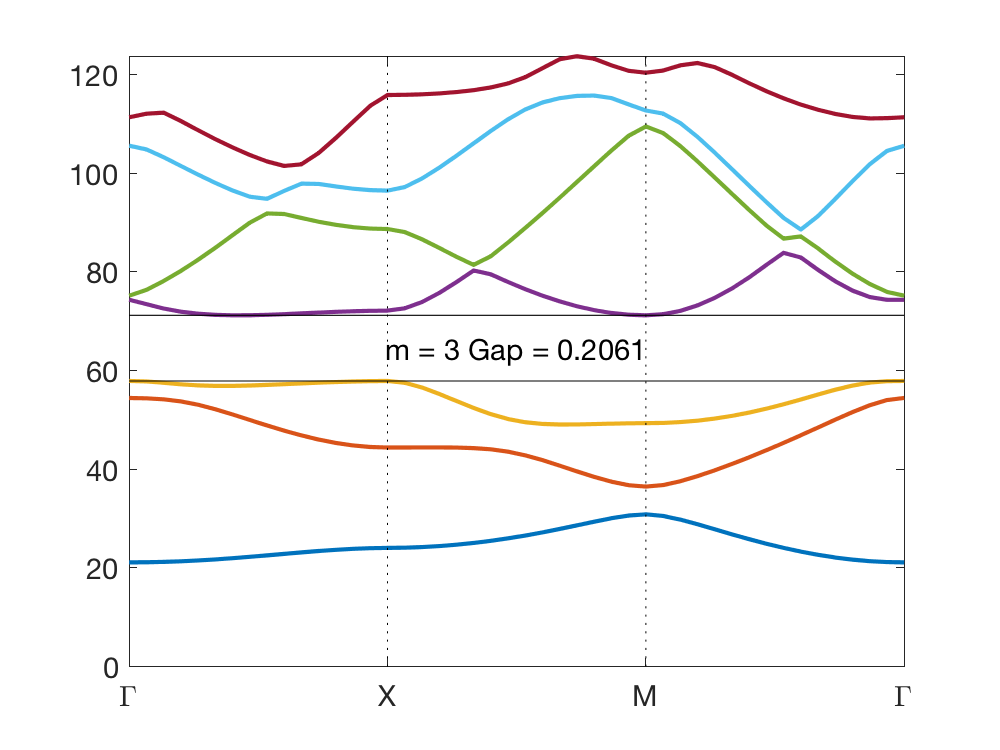}\\
\begin{minipage}{.1\textwidth} \vspace{-5cm} $V_+ = 100$ \end{minipage}
\includegraphics[width=.40\textwidth]{a_0_b_1_N64_V100_band_3_p4129_V_iter_019.png}
\includegraphics[width=.40\textwidth]{a_0_b_1_N64_V100_band_3_p4129_Disp_iter_019.png}
\begin{minipage}{.1\textwidth} \vspace{-5cm} $V_+ = 200$ \end{minipage}
\includegraphics[width=.40\textwidth]{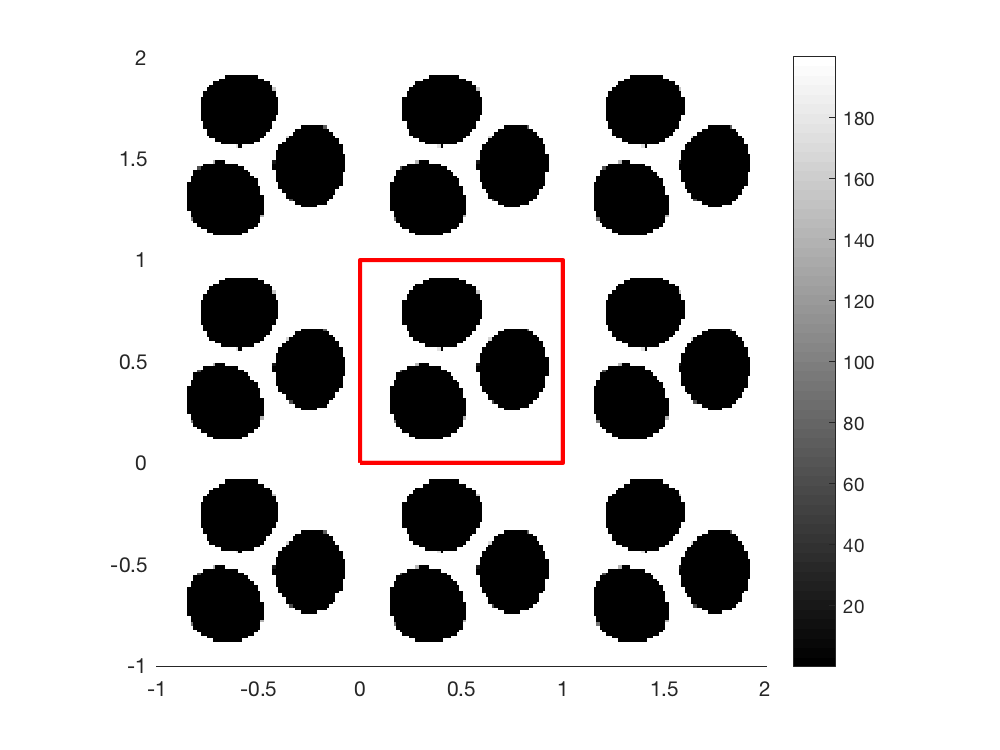}
\includegraphics[width=.40\textwidth]{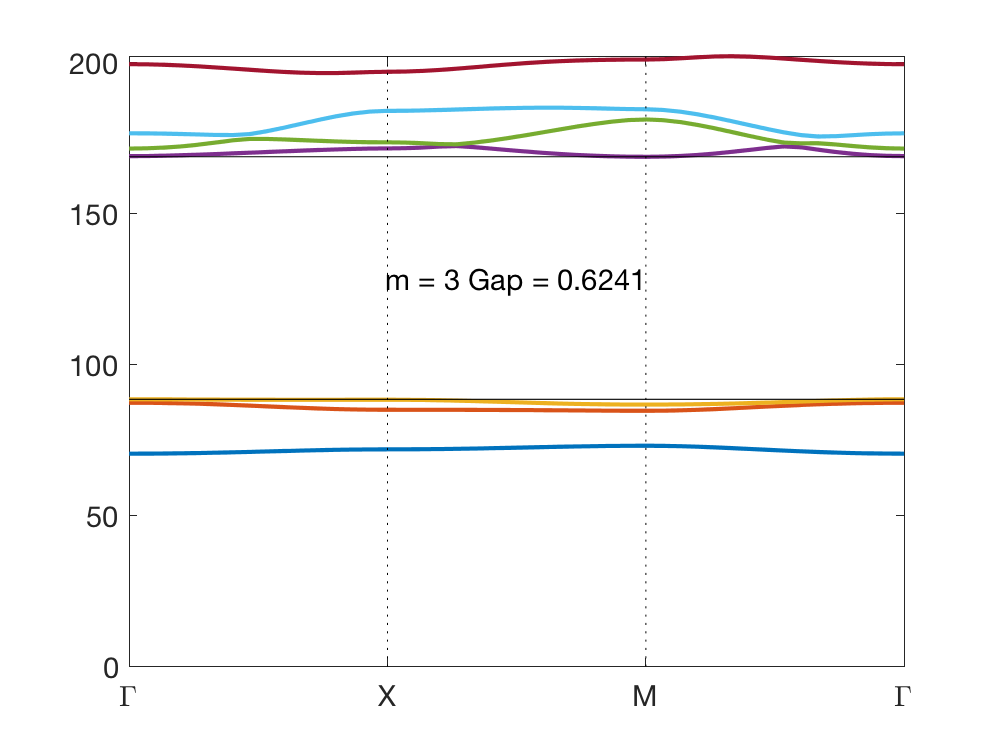}
\caption{For the square lattice and $m=3$, 
{\bf (left)} the  potential  maximizing $G_m$  and 
{\bf (right)} corresponding dispersion relation over the irreducible Brillouin zone are plotted for different values of $V_+$. As $V_+$ increases, the number of components per periodic cell where $V = 0$ changes. 
}
\label{f:varyV+}
\end{center}
\end{figure}

\begin{figure} [t!]
\begin{center}
$m=1$ \hspace{4cm} $m=2$  \hspace{4cm} $m=3$ \\
\includegraphics[width=.32\textwidth]{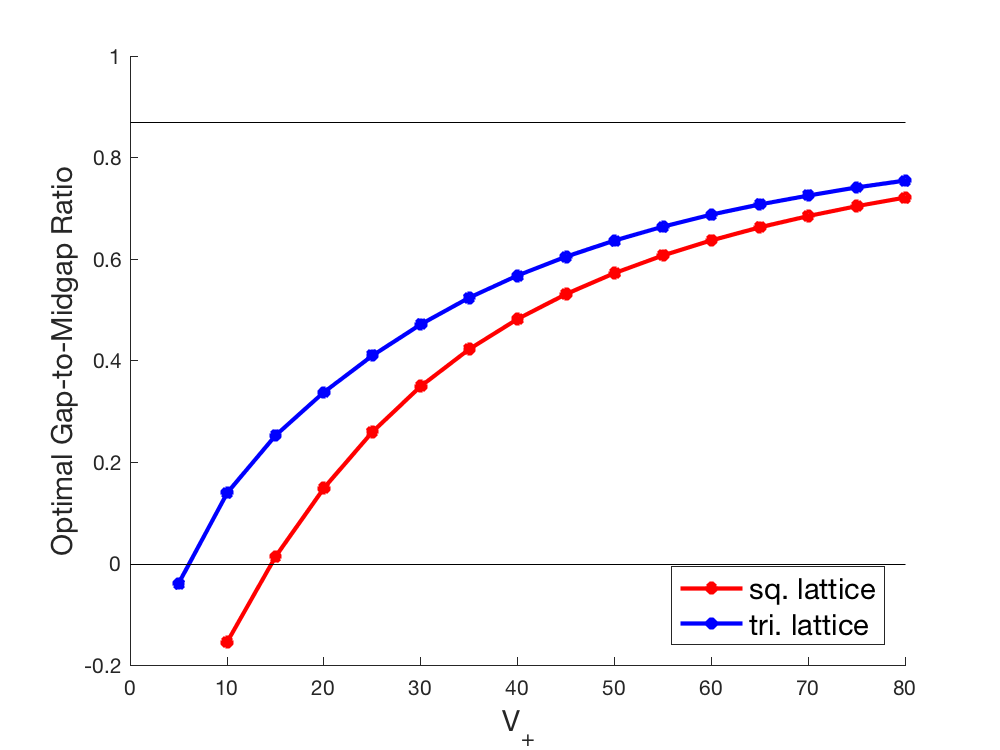}
\includegraphics[width=.32\textwidth]{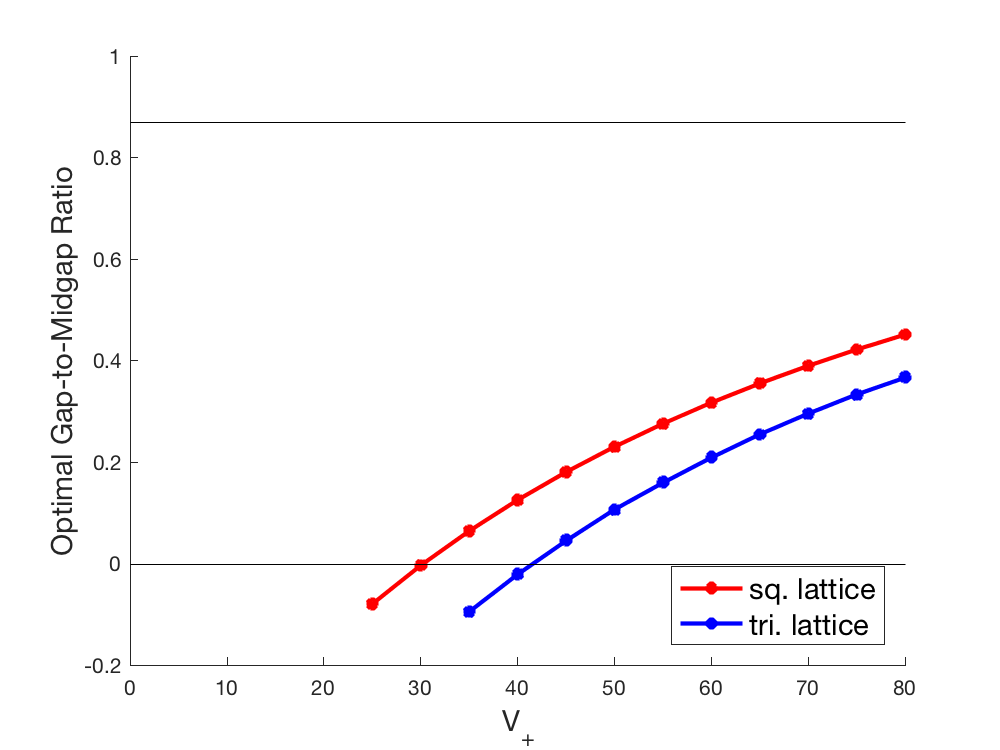}
\includegraphics[width=.32\textwidth]{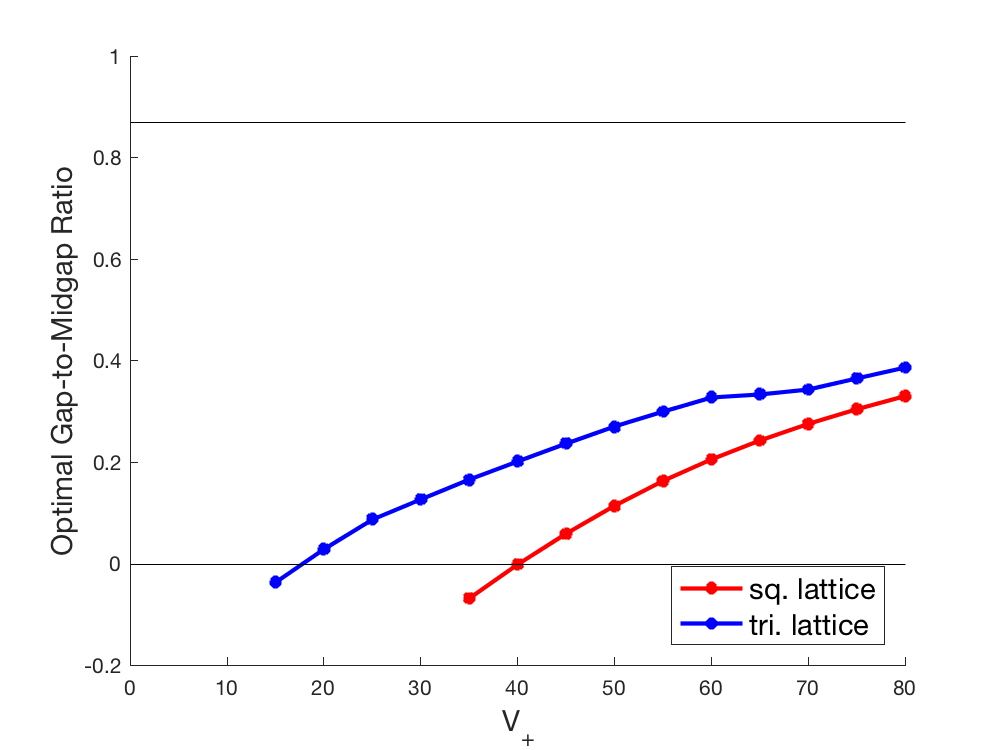}
\caption{A plot of  $G_{m,\Gamma,V_+}^*$ vs. $V_+$ for $m=1,2,3$ and $\Gamma$ the square lattice (red) and triangular lattice (blue). 
We observe that $G^*$ is increasing with $V_+$ and that for all values of $V_+$, the triangular lattice has a larger first and third gap than the square lattice, while the opposite is true for the second gap. }
\label{f:OptValvaryV+}
\end{center}
\end{figure}

\subsection{Optimization over lattices} \label{s:Opt2dLattices}
In the previous sections, we have fixed the lattice $\Gamma$ (either square or triangular) and studied properties of optimal potentials. In this section, we study how the optimal value, $G_{m, \Gamma, V_+}^{\star}$ and optimal potentials, $V^\star$ vary as we vary $\Gamma$ over equal-volume Bravais lattices $\Gamma$. Computing the extremal gaps for general $\Gamma$ is a more challenging problem since there are no rotational symmetries giving a small irreducible Brillouin zone. Using the fact that the potential is real, we discretize half of the Brillouin zone; see Section \ref{sec:IBZ}. 

A parameterization of equal-volume, two-dimensional lattices is given in Appendix \ref{sec:LattParam}. In particular, see Figure \ref{fig:FundDom}, where the set $U$ in Proposition~\ref{prop:LatticeParam} is illustrated. Using this parameterization of lattices, 
$\Gamma = \Gamma(a,b)$, in  Figure \ref{f:LatticeParam}, we plot $G_{m, \Gamma(a,b), V_+}^{\star}$ as $(a,b)$ varies over $U$  for  fixed $m=1,2$ and $V_+=100$. 

For $m=1$, we observe that the triangular lattice $(a,b) = \left( \frac{1}{2}, \frac{\sqrt{3}}{2} \right)$ is optimal. The optimal potential and corresponding dispersion surface along the boundary of Brillouin zone are shown in Figure \ref{f:2DtriA} (first row). 

For $m=2$, the optimal lattice has parameters $a=0$ and $b\approx \sqrt{3}$. In Figure \ref{f:Rectangular} we plot the optimal potential and corresponding dispersion surfaces over the entire Brillouin zone. The optimal potential has the symmetry of the triangular lattice, even though the primitive cell is rectangular. The lattice spacing in this triangular lattice is $3^{-1/4}$, which is smaller than the lattice spacing for the triangular lattice with unit area, equal to $\sqrt{2} \cdot3^{-1/4}$.

\begin{figure} 
\begin{center}
$m=1$ \hspace{7cm} $m=2$  \\
\includegraphics[width=.45\textwidth]{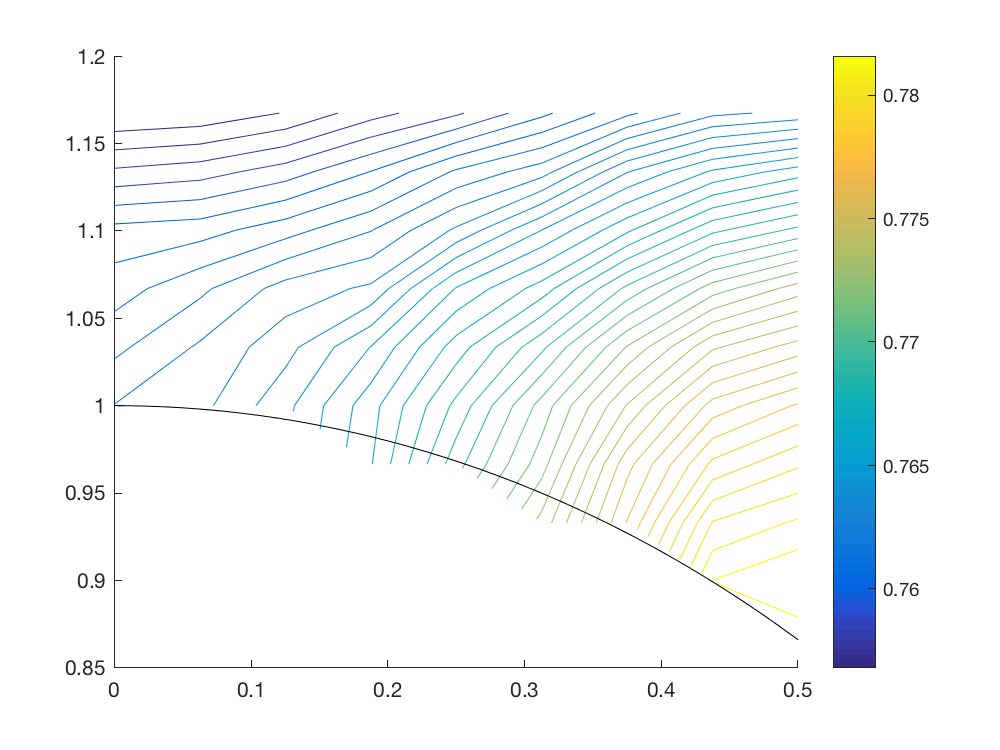}
\includegraphics[width=.45\textwidth]{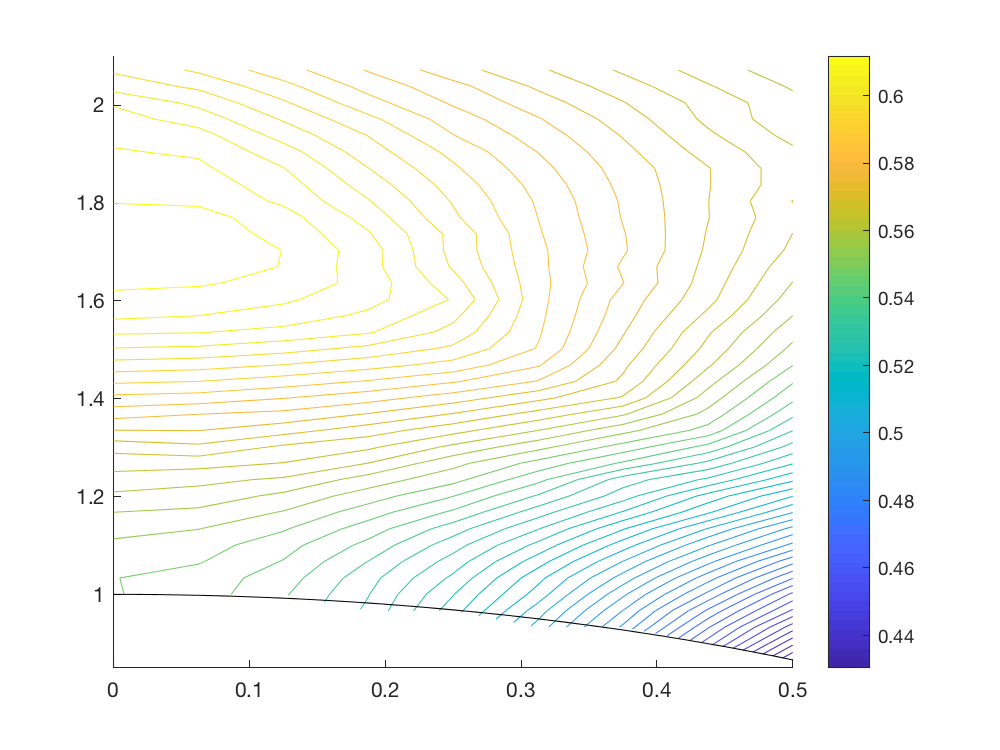}
\caption{For fixed $m=1,2$ and $V_+=100$, we plot $G_m^*$ for lattice $\Gamma = \Gamma(a,b)$  where $(a,b)$ varies over $U$. The set $U$, which gives a parameterization of two-dimensional lattices is described in  Appendix \ref{sec:LattParam} and, in particular,  illustrated in Figure \ref{fig:FundDom}.}
\label{f:LatticeParam}
\end{center}
\end{figure}

\begin{figure} 
\begin{center}
\includegraphics[height=.4\textwidth,trim={3cm 0 3cm 0},clip]{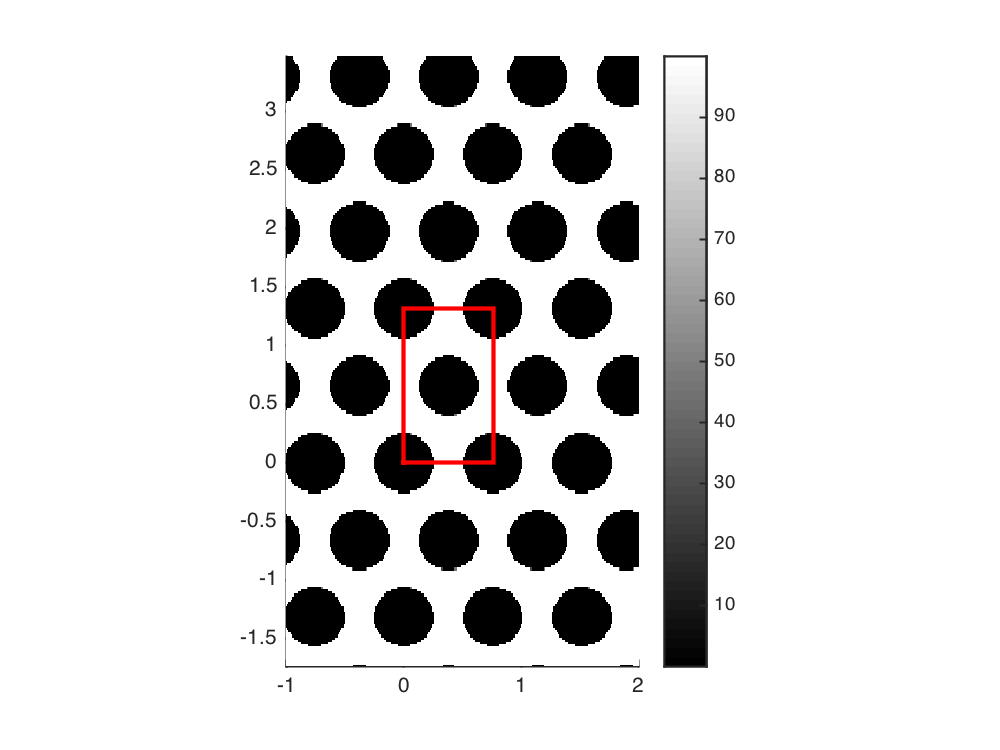}
\includegraphics[height=.4\textwidth]{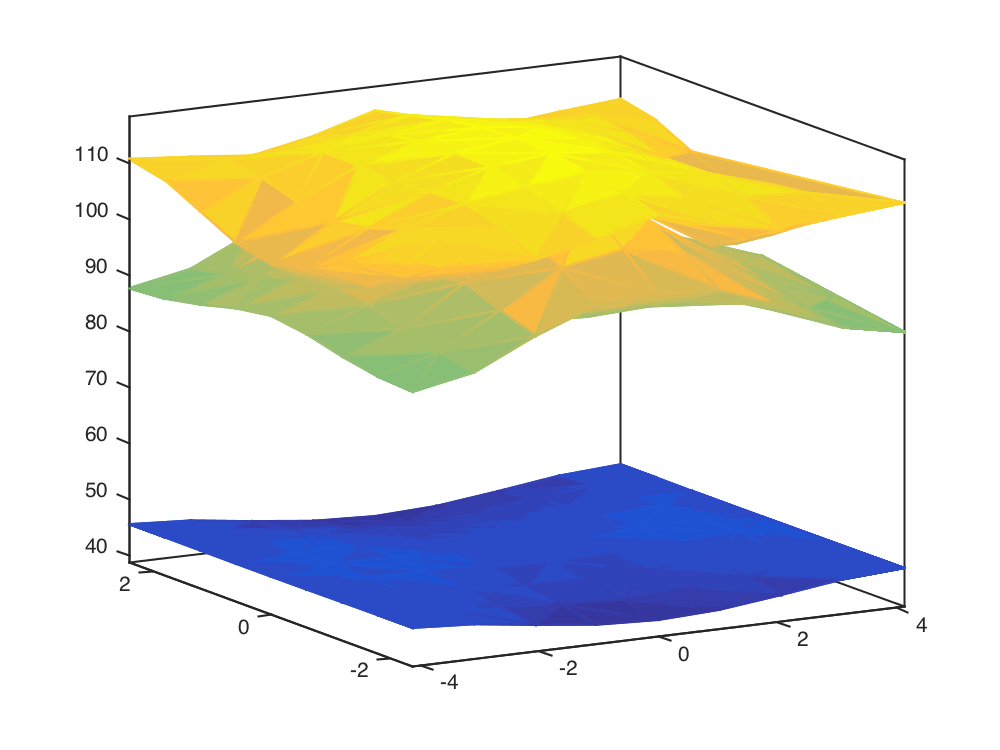}
\caption{The optimal potential {\bf (left)} and first four surfaces of the dispersion relation {\bf (right)} that maximize the $m=2$ gap for lattice parameters $(a,b) = (0,\sqrt{3})$; see also Figure \ref{f:LatticeParam}. The first two dispersion surfaces are very close to each other. }
\label{f:Rectangular}
\end{center}
\end{figure}

\section{Conclusion and discussion} \label{sec:disc}
For fixed $m$,  we have considered the problem of maximizing the gap-to-midgap ratio for the $m$-th spectral gap over the class of potentials which have fixed periodicity and are pointwise bounded above and below.  We show solutions this problem exist in Theorem \ref{thm:exist}.

 In Section  \ref{sec:1D}, we prove that the optimal potential in one dimension attains the pointwise bounds almost everywhere in the domain and is a step function attaining the imposed minimum and maximum values on exactly $m$ intervals. Optimal potentials are computed numerically using a rearrangement algorithm and found to be periodic. In Proposition \ref{p:1DhighContrast}, we prove that periodic potentials are optimal in the high contrast limit ($V_+ = \infty$). 
 
In Section \ref{s:Opt2d}, we develop an efficient rearrangement method for the two-dimensional problem based on a semi-definite program formulation (Algorithm \ref{alg:SDPrearrange}) and apply it to study properties of extremal potentials. In two-dimensional numerical simulations, we study the potential that maximizes the $m$-th bandgap, $G_{m, \Gamma, V_{+}}$, for $m=1,\ldots,8$ for a fixed $V_{+}$ on both square and triangular lattices. Although we are only able to prove in Proposition \ref{prop:Wbb} that the solution is weakly bang-bang, the computational results suggest that the solution is bang-bang. We also study the dependence of the optimal potentials on the parameter $V_+$. We observe from the computational results that the optimal potential as $V_+ \to \infty$ that the region where \revision{$V = 0$} consists of $m$ disks in the primitive cell. We prove, in Propositions \ref{prop:OptDisc} and Corollary \ref{prop:OptDiscMg2}, the infinite contrast asymptotic result ($V_+ = \infty$), that for $m\geq1$, subject to a geometric  assumption, that the optimal potential has \revision{$V = 0$} on exactly $m$ equal-size disks. Ultimately, we study the problem over equal-volume Bravais lattices. For $m=1$, the triangular lattice gives the maximal bandgap. For $m=2$, the maximal bandgap is achieved at $(a,b)=(\frac{1}{2},\frac{\sqrt{3}}{2})$. Even though the primitive cell is a rectangle but the optimal potential has the symmetry of the triangular lattice.

The numerical and asymptotic results suggest that for finite, but large values of $V_+$, the maximal potentials for $G_m$ are of the bang-bang form in 
 \eqref{e:BangBang}, where $\Omega_-$ is the union of $m$ connected sets, possibly disks. For the TM Helmholtz problem, it was conjectured by Sigmund and Hougaard that the optimal refractive index is given by a configuration of equal-sized disks with centers at the CVT \cite{sigmund2008geometric}.  This would be a reasonable conjecture for this problem as well. 

There are several open questions for this work. First, there are several questions for the rearrangement algorithms, namely,  can it be proven that the rearrangement algorithms are decreasing for non-stationary iterations? Can we estimate the number of iterations needed?  It is also desirable to establish under what conditions is the solution to the SDP in Algorithm  \ref{alg:SDPrearrange} is bang-bang. Can more information about the solution to the SDP in Algorithm \ref{alg:SDPrearrange} be used to speed up the implementation? 

As for properties of optimal potentials, in dimension $d\geq 2$, is the solution bang-bang?  (See Assumption \ref{assum:bangbang}.) While we have proven a partial result for an infinite contrast potential  (see Propositions \ref{prop:OptDisc} and \ref{prop:OptDiscMg1}), it is of interest to study the large but finite contrast case. One strategy for this is along the recent lines by R. Lipton and R. Viator  \cite{hempel2000spectral,Lipton:2016aa}, which we hope to pursue in future work.

\subsection*{Acknowledgements} Braxton Osting would like to thank the IMA, where he was visiting while most of this work was completed. 

\appendix
\section{Parameterization of Lattices} \label{sec:LattParam}
Let $B = [b_1, \ldots, b_n] \in \mathbb R^{n\times n}$ have linearly independent columns. The lattice generated by the basis $B$ is the set  of integer linear combinations of the columns of $B$, 
$$
\mathcal L(B) = \{ Bx \colon x \in \mathbb Z^n \}. 
$$
Let $B$ and $C$ be two lattice bases. We recall that $\mathcal L(B) = \mathcal L(C)$ if and only if there is a unimodular\footnote{A  matrix $A \in \mathbb Z^{n\times n}$ is \emph{unimodular} if $\mathrm{det} A  = \pm 1$.}  
matrix $U$ such that $B = CU$. Thus, there is a one-to-one correspondence between the unimodular $2\times 2$ matrices and the bases of a two-dimensional lattice. 

We say that two lattices are isometric if there is a rigid transformation that maps one to the other. The following proposition parameterizes the space of two-dimensional, unit-volume lattices modulo isometry. 
\begin{prop} \label{prop:LatticeParam}
Every two-dimensional lattice with volume one is isometric to a lattice parameterized by the basis 
$$ B_{a,b} = \begin{pmatrix} \frac{1}{\sqrt b} & \frac{a}{\sqrt b} \\ 0 & \sqrt b \end{pmatrix}, $$ 
where the parameters $a$ and $b$ are constrained to the set
$$
U := \left\{ (a,b) \in \mathbb R^2 \colon b>0, \ a \in [ 0,1/2  ], \ \text{and} \ a^2 + b^2 \geq 1 \right\}.
$$ 
\end{prop}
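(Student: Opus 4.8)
The plan is to recognize Proposition~\ref{prop:LatticeParam} as the classical statement that the set $U$ is a fundamental domain for the action of the modular group (enlarged by reflections) on the upper half-plane, and to make the correspondence between lattices and points of $U$ explicit. First I would encode a volume-one lattice by a single complex parameter. Identifying $\mathbb{R}^2$ with $\mathbb{C}$ and taking any basis $w_1,w_2$ of a volume-one lattice $\Gamma$, I rotate so that $w_1$ lies on the positive real axis, writing $w_1 = r > 0$, and, after a reflection across a coordinate axis if necessary, arrange that $w_2$ lies in the open upper half-plane $\mathbb{H}$. Then $\tau_0 := w_2/w_1 \in \mathbb{H}$, the lattice equals $r(\mathbb{Z} + \mathbb{Z}\tau_0)$, and the volume normalization reads $r^2\,\mathrm{Im}\,\tau_0 = 1$. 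Thus the isometry class of $\Gamma$ is captured by $\tau_0$ up to the remaining freedom in the basis and the residual reflection.

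Next I would track how $\tau$ changes under these freedoms. A change of oriented basis by $\left(\begin{smallmatrix} p & q \\ s & t\end{smallmatrix}\right) \in \mathrm{SL}_2(\mathbb{Z})$ acts on $\tau$ by the M\"obius map $\tau \mapsto (p\tau+q)/(s\tau+t)$, while the reflection $z \mapsto -\bar z$ induces $\tau \mapsto -\bar\tau$ (keeping $\mathbb{H}$ invariant). Together these generate the $\mathrm{PGL}_2(\mathbb{Z})$ action, so the proposition reduces to the claim that every $\tau \in \mathbb{H}$ is equivalent under this action to a point of $\{\,|\tau| \ge 1,\ 0 \le \mathrm{Re}\,\tau \le 1/2\,\}$. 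Writing $\tau = a + ib$, the condition $|\tau|\ge 1$ is exactly $a^2+b^2 \ge 1$, so this set is precisely $U$. Since the proposition asserts only existence of a representative (not uniqueness), I need not worry about boundary identifications.

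For the reduction itself I would run the standard argument. Among all $\gamma \in \mathrm{SL}_2(\mathbb{Z})$ one has $\mathrm{Im}(\gamma\tau) = \mathrm{Im}\,\tau / |s\tau+t|^2$, and as $(s,t)$ ranges over integer pairs the numbers $s\tau+t$ are exactly the lattice points of $\mathbb{Z}+\mathbb{Z}\tau$, which form a discrete set bounded away from $0$; hence $\mathrm{Im}(\gamma\tau)$ attains a maximum. Choosing $\gamma$ to realize this maximum, I translate by $\tau \mapsto \tau+1$ to put $\mathrm{Re}$ in $[-1/2,1/2]$, and then $|\gamma\tau|\ge 1$ necessarily holds, since otherwise $\tau \mapsto -1/\tau$ would strictly increase the imaginary part and contradict maximality. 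A final reflection $\tau \mapsto -\bar\tau$ moves $\mathrm{Re}$ into $[0,1/2]$, landing $\tau$ in $U$. I expect this termination/attainment step to be the only genuine obstacle: justifying that the supremum of $\mathrm{Im}(\gamma\tau)$ is achieved (discreteness of $\mathbb{Z}+\mathbb{Z}\tau$) and that the inequality $|\tau|\ge 1$ follows from maximality, as opposed to the surrounding bookkeeping about rotations, reflections, and the volume constraint.

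Finally I would convert back to the stated matrix form. Given $\tau = a+ib \in U$, the lattice $\tfrac{1}{\sqrt b}(\mathbb{Z}+\mathbb{Z}\tau)$ has volume one, and its natural basis vectors are $(1/\sqrt b,\,0)$ and $(a/\sqrt b,\,\sqrt b)$, which are exactly the columns of $B_{a,b}$; this exhibits $\Gamma_{a,b}$ as an isometric copy of the representative constructed above and completes the argument. As a cross-check, $\det B_{a,b} = 1$ confirms the volume normalization is consistent throughout.
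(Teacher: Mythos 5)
Your proof is correct, but it takes a genuinely different route from the paper. The paper argues directly on the basis matrix: it picks a basis whose vectors make an acute angle, orders them by length, and rotates/reflects so the shorter vector lies along the $x$-axis and the longer one in the first quadrant (giving $a \geq 0$ and $a^2+b^2 \geq 1$ at once), then applies the unimodular shear $a \mapsto a+1$ to confine $a$ to $[0,1/2]$. You instead pass to the complex parameter $\tau = a + ib \in \mathbb{H}$ and run the classical $\mathrm{PGL}_2(\mathbb{Z})$ fundamental-domain argument: maximize $\operatorname{Im}(\gamma\tau)$ over the orbit (attained by discreteness of $\mathbb{Z}+\mathbb{Z}\tau$), translate the real part into $[-1/2,1/2]$, deduce $|\tau|\geq 1$ from maximality under $\tau \mapsto -1/\tau$, and reflect. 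What your approach buys is rigor at precisely the point the paper glosses over: a single shear that moves $a$ into $[0,1/2]$ can destroy the inequality $a^2+b^2\geq 1$ (the translated second vector may become shorter than the first), so the paper's reduction, read literally, must be iterated Lagrange--Gauss style, with a termination argument. Your maximization of the imaginary part is exactly the device that makes both constraints hold simultaneously --- it amounts to choosing a shortest lattice vector as the first basis vector --- and the discreteness argument supplies the attainment that the iteration would otherwise require. What the paper's approach buys is elementarity and explicitness: no modular-group machinery, and the matrix form $B_{a,b}$ with its rotation/reflection bookkeeping appears directly rather than through the dictionary between lattices, homothety classes, and points of $\mathbb{H}$ (a dictionary you invoke correctly but mostly leave implicit in the final step).
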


The set $U$ defined in Proposition~\ref{prop:LatticeParam} is illustrated  in Figure~\ref{fig:FundDom}.  

\begin{figure}[t]
\begin{center}
\begin{tikzpicture}[scale=.4,thick,>=stealth',dot/.style = {fill = black,circle,inner sep = 0pt,minimum size = 4pt}]

 \filldraw[fill=blue!6]  (0,10) arc(90:60:10cm)  -- (5,17) [dashed] -- (0,17) -- cycle;

\draw[->] (-3,0) -- (13,0) coordinate[label = {below:$a$}];
\draw[->] (0,-.1) coordinate[label={below:$0$}] -- (0,17) coordinate[label = {left:$b$}];

\draw (10,.1) -- (10,-.1) coordinate[label={below:$1$}]; 
\draw (5,-.1)coordinate[label = {below:$0.5$}] -- (5,17) coordinate(top); 
\draw (10,0) arc (0:60:10) arc (60:90:10) node[midway]{\begin{tabular}{c} rhombic \\lattices\end{tabular}}{} arc(90:110:10) ; 
 
 \draw (5,8.660254) node[dot,label={right: \begin{tabular}{c} triangular \\lattice\end{tabular}}](triLat){};
 \draw (0,10) -- (0,13) coordinate[label={center:\rotatebox{90}{\begin{tabular}{c}rectangular\\lattices\end{tabular}}}]{} --(0,17) ;
 \draw (2.5,12) node[label={\begin{tabular}{c} oblique \\lattices\end{tabular}}]{}; 
 \draw (0,10) node[dot,label={below left:\begin{tabular}{c} square \\lattice\end{tabular}}]{};

\end{tikzpicture}
\caption{The set $U$ in Proposition~\ref{prop:LatticeParam}. Parameters $(a,b)$ corresponding to  square, triangular, rectangular, rhombic, and oblique lattices are also indicated.}
\label{fig:FundDom}
\end{center}
\end{figure}
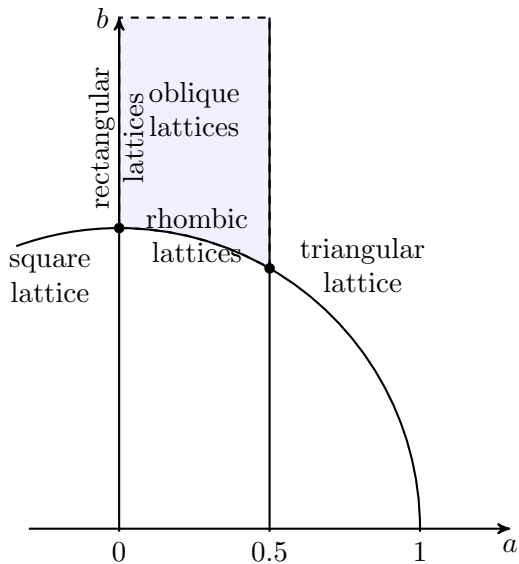

\begin{proof} Consider an arbitrary lattice with unit volume. 
We first choose the basis vectors so that the angle between them is acute.  After a suitable rotation and reflection, we can let the shorter basis vector (with length $\frac{1}{\sqrt b}$) be parallel with the $x$ axis and the longer basis  vector (with length $ \sqrt{ \frac{a^2}{b} + b} = \sqrt{\frac{1}{b} \left( a^2 + b^2 \right)} \geq \sqrt{ \frac{1}{b}}$) lie in the first quadrant (so $a\geq 0$). 
Multiplying on the right by a unimodular matrix, $ \begin{pmatrix} 1 & 1 \\ 0 & 1 \end{pmatrix}$, we compute 
$$
\begin{pmatrix} \frac{1}{\sqrt b} & \frac{a}{\sqrt b} \\ 0 & \sqrt b \end{pmatrix} 
 \begin{pmatrix} 1 & 1 \\ 0 & 1 \end{pmatrix} = 
\begin{pmatrix} \frac{1}{\sqrt b} & \frac{a+1}{\sqrt b} \\ 0 & \sqrt b \end{pmatrix} . 
$$
Since this is equivalent to taking $a\mapsto a+1$, it follows that we can identify the lattices associated to the points $(a,b)$ and $(a+1,b)$. 
Thus, we can  restrict the parameter $a$ to the interval  $\left[ 0, 1/2  \right]$.  
\end{proof}

\bibliographystyle{abbrv}
\bibliography{SchroBandGaps.bib} 

\end{document}